%% file: Poincare26.tex
\documentclass[a4paper,10pt]{article}
\usepackage{geometry,times}
\usepackage{amsfonts}
\usepackage{amssymb}
\usepackage{amsmath}
\usepackage{tikz}
\usepackage{pgfplots}
\usepackage{tikz-3dplot}
\tdplotsetmaincoords{60}{115}
\pgfplotsset{compat=newest}
\definecolor{paleyellow}{rgb}{1, 1, 0.9}
\usepackage{amsthm}
\newcommand{\rose}{R\accent 23 u\v zi\v cka}
\newcommand{\bb}{\mathbb}

\theoremstyle{plain}
\newtheorem{satz}{prop}
\newtheorem{corollary}[satz]{Corollary}
\newtheorem{lem}[satz]{Lemma}
\newtheorem{definition}[satz]{Definition}
\newtheorem{prop}[satz]{Proposition}
\theoremstyle{thm}
\newtheorem{theorem}[satz]{Theorem}
\theoremstyle{note}
\newtheorem{note}[satz]{Note}
\newtheorem{remark}[satz]{Remark}
\theoremstyle{problem}

\newtheorem{example}[satz]{Example}
\theoremstyle{notadef}
\newtheorem{notation}[satz]{Notation}
\usepackage[]{graphicx}
\usepackage{color}
\input{color_pdf.tex}
 \usepackage{titlesec}
\titleformat{\section}
{\normalfont%\sffamily
  \bfseries%\color{cyan}
}
{\thesection}{1ex}{}
\titleformat{\subsection}
{\normalfont%\sffamily
  \bfseries%\color{cyan}
}
    {\thesubsection}{1ex}{}
\newcommand{\NN}{\ensuremath{\mathbb{N}}}

\newcommand{\bz}{\underline{{z}}}
\newcommand{\cA}{{\mathcal{A}}}

\newcommand{\bx}{\underline{x}}

\begin{document}
%%%%%%%%%%%%%%%%%%
\hspace*{1cm}\\[.4cm]
{\large \textbf{Exact Poincaré Constants in
  $n$-dimensional Annuli}}
\\[.5cm]
Bernd Rummler\\
\hspace*{2mm}{\small \em Otto-von-Guericke-Universit\"at Magdeburg,
  Inst. f\"ur Analysis und Numerik, PF
  4120, 39016 Magdeburg}
\\[2mm]
Michael \rose\\
\hspace*{2mm}{\small \em Inst. f{\"u}r Angewandte Mathematik, Universit{\"a}t Freiburg,
  Ernst-Zermelo-Str.~1, 79104 Freiburg}
\\[2mm]
Gudrun Th\"ater\footnote{Corresponding author\quad
  E-mail:~\textsf{gudrun.thaeter@kit.edu}}\\
\hspace*{2mm}{\small \em Inst. f\"ur Angewandte \& Numerische Mathematik, KIT, 
  76128 Karlsruhe}
\\[5mm]
%%%%%%%%%%%%%%%%%
\hspace*{.1cm}\hfill\parbox{16cm}{
 {\small\textbf{Key words} Poincar\'e constants, Laplacian, annuli in finite dimensionsional spaces, first eigenvalues}\\[2mm]
 {\small\textbf{MSC (2010)} 35J05, 35J08, 35Q35, 76D07, 76E06, 76M22}
 \\[2mm]
{\small\textbf{Abstract:}
 We study $n$-dimensional annuli for $n\,\in\,\{2,\dots,N\}$ with
 $N\,<\,\infty$. 
 We choose a non-dimensional setting such that for any fixed $n%\,\in\,\{2,..,N\}
 $ and given  number ${\cal A}>0$ the 
 annuli ${\Omega}_{(n),\cal A}$ are defined
as space between two concentrical 
balls with radii ${\cal A}/2$ and ${\cal A}/2 +1$ in ${\bb
  R}^{n}$. For these
geometries we provide calculated (precise) Poincar\'e constants. 
These depend on ${\cal A}$ and the dimension $n$. 
Additionally we find a direct match of the Poincar\'e constants
for solenoidal vector fields in ${\bb R}^{n}$ and the Poincar\'e constants
for scalar functions in ${\bb
  R}^{n+2}$ (all with vanishing Dirichlet traces). 
This is based on the relation of the 
first eigenvalues and one eigenfunction
of the (scalar) Laplace and the Stokes operator. 
In addition we consider the limit ${\cal
  A}\,\to\,0$.  In this context problems in domains
${\Omega}_{(n),\sigma}^{*}$ (cf. \cite{RumTh2024})
are investigated.
These domains
enable us to use the Green's function of the Laplacian 
with vanishing 
Dirichlet traces 
to show that 
the first eigenvalue here 
tends to the first eigenvalue of the corresponding problem on the open
unit ball in ${\bb
  R}^{n}$. 
On the other hand, we take advantage of the so-called small-gap limit
for  ${\cal A}\to\infty$.% like in our previous papers to Poincar\'e constants in annuli. 
}}
%%%%%%%%%%
\newcommand{\abs}[1]{\left | #1 \right |}
\newcommand{\RR}{\mathbb{R}}
\newcommand{\cC}{{\mathcal{C}}}
%\newcommand{\Vi}{\varphi}
%%%%%%%%%%%%%%%%%%%%%%%%%%%%%%%%%%%%%%%%%
\section{Introduction\label{sec_int}}
Flow in annulus domains in 
dimensions  $n=2$ or $n=3$ is a model for a lot of
applications. Some examples are natural convection 
in the space between concentric pipes ($n=2$)
and the modelling of astrophysical resp. geophysical
applications ($n=3$). Due to the different possible thickness of the flow
domain these relate to very diverse 
objects. They have in common that one has to understand 
fluid flow in spherical shells - i.e. flow in the domain between
two concentric spheres (in space dimensions $2$ or $3$). 
Fig.~\ref{fig:geometry} shows the geometry with general inner and
outer radii $R_i$ and $R_o$, respectively,  which is normalized later
on using the non-dimensional quantities defined in (\ref{calA}) below. In this
context we published a series of papers (see \cite{RuRuTh2025} and the
references therein) and  could not avoid
to compare
differences and similarities between the cases $n=2$ and $n=3$. In particular, in \cite{RuRuTh2016} 
we calculate the Poincar\'e
constants as function of 
a non-dimensional number, which characterises the (relative) size of
the gap (cf. also \cite{RuRuTh2025}). We
 became curious how our techniques translate and apply to higher
dimensions and if there is a pattern for the Poincar\'e constants
in these higher dimensions. Now, 
this is the focus of our present paper. In particular we first provide the 
mathematical characteristics of the  
Poincar\'e constants for scalar functions in ${\bb R}^{n}$, $n >
1$. Since the underlying application is fluid flow we are also interested in the
special case that the objects are solenoidal vectors. 
Here we will see that one of the first Stokes eigenfunctions fixes the related 
Poincar\'e constant. We say ``one of'' since, e.g. the first eigenvalue in
dimension $n=3$ has multiplicity $3$.
For this we formulate a shift theorem between the Poincar\'e constants
for vectorial functions and  
scalar functions.% to benefit from our new results for scalar functions in ${\bb R}^{n}$.

\begin{figure}[th]
\centering
\begin{tikzpicture}[scale=1.8,tdplot_main_coords] 
% Create a point (P)
\coordinate (P) at ({0},{1.905/sqrt(2)},{1.905/sqrt(2)
});
\coordinate (P1) at ({0},{-0.765/sqrt(2)},{0.765/sqrt(2)});
% Draw shaded circle
\shade[ball color = vivid-viol,
    opacity = 0.25
] (0,0,0) circle (1.5cm);the the 
\shade[ball color = yellow,
    opacity = 0.3
] (0,0,0) circle (.75cm);
 
% draw arcs 
 
% Projection of the point on X and y axes
%\draw[thin, dashed] (P) --++ (0,0,{-1/sqrt(3)});
%\draw[thin, dashed] ({1/sqrt(3)},{1/sqrt(3)},0) --++
%(0,{-1/sqrt(3)},0);
%\draw[thin, dashed] ({1/sqrt(3)},{1/sqrt(3)},0) --++
%({-1/sqrt(3)},0,0);
 
% Axes in 3 d coordinate system
%\draw[-stealth] (0,0,0) -- (1.70,0,0);
%\draw[-stealth] (0,0,0) -- (0,1.30,0);
%\draw[-stealth] (0,0,0) -- (0,0,1.30);
%\draw[dashed, gray] (0,0,0) -- (-1,0,0);
%\draw[dashed, gray] (0,0,0) -- (0,-1,0);
 Call Center - Kanyshay KAYA <callcenter4@corneliaresort.com>
% Line from the origin to (P)
\draw[thick,vivid-viol, -stealth] (0,0,0) -- (P);
\draw(0.5,1.5,1.2) node[anchor=east,vivid-viol]{$R_o$};
% Add small circle at (P)
\draw[thick,gray, -stealth] (0,0,0) -- (P1);
\draw((0.25,-0.05,0.35) node[anchor=east]{$R_i$};
%\draw[fill = lightgray!50] (P) circle (0.5pt);
% \draw[->, >=stealth] (0,0)--(-0.4,1.96);
%   \draw[->, >=stealth] (0,0)--(-0.7,0.714);
%\draw(-0.3,0.3) node[anchor=east]{$R_i$}; 
%
\end{tikzpicture}
\caption{$2$d Scetch of $n$-dimensional annulus  with inner/outer radius%  in
  % a Cartesian coordinate system
}
\label{fig:geometry}
\end{figure}
\noindent To have a non-dimensional setting we use the following two well established frames: Either the so-called {\em
  inverse relative gap width} (mathematically more precise would be %to
                                %call it the
{\em gap weighted inner diameter}) $\cA$ or the parameter ${\sigma}$.
The first is defined as
\begin{equation}\label{calA}
{\cal A}\,:=\,\frac{\displaystyle{2 R_{i}}}{\displaystyle{ R_{o}
    -R_{i}}}
\qquad\mbox{while}\qquad
{\sigma}\,:=\,\frac{\displaystyle{R_{i}}}{\displaystyle{ R_{o}}}=
\frac{\displaystyle{\cal A}}{\displaystyle{ {\cal A} +2 }}\,\,,
\end{equation}
for $R_{i}$ and $R_{o}$  denoting the inner and
outer radius of the annulus, respectively, and $0\,<\,R_{i}<\,R_{o}$
(see Fig.~\ref{fig:geometry}).\\[1mm]
Our \textbf{main results} are: We provide exact 	
conditional equations together with numerical approximations and exact
values for the Poincar\'e constants 
for scalar fields on spherical shells in ${\bb R}^{n}$, $n\,\in\,{\bb
  N}, \,1< n <\infty$ for any  
${\cal A}\,\in\,[0,\infty)$ 
as
well as the first eigenvalues of the Laplace operators in these configurations
%on spherical shells in ${\bb R}^{n}$, $n\,\in\,{\bb N}, \,2 \leq n
%<\infty$ for ${\cal A}\,\in\,[0,\infty)$.
(all {\em including the limits}, i.e. ${\cal A}$ infinitely large and
zero).
In addition we find the Poincar\'e constants for solenoidal vector
fields and the first eigenvalues for the Stokes
operator in these configurations including the limiting cases.
\\[1mm]
{\bf General notation A.} Let ${\bb R}^{n}$ be endowed with the
usual Euclidian norm $\| . \|$. 
Elements of ${\bb R}^{n}$ are denoted by underlined small letters.
We write ${\Omega}_{(n)}^{*}:=\{{\underline{x}} \in {\bb R}^{n}:\,
\|{\underline{x}}\|\,
<1\}$ for the open unit ball and use $\omega_{(n)}\,:=\,\{{\underline{x}}\in {\bb
  R}^{n}:\,\|{\underline{x}}\|=1\}$ for its surface.
For $r\,\in\,(0,\infty)$ the closed spherical surfaces around the origin with radius $r$ are
$\omega_{(n),r}\,:=\,\{{\underline{x}}\in {\bb
  R}^{n}:\,\|{\underline{x}}\|=r\} $
and the surface areas of $\omega_{(n)}$ are 
$|\omega_{(n)}|\,=\,{2\pi}^{\frac{n}{2}}/
{\Gamma(\frac{n}{2})}$ ($\forall  n\,\in\,{\bb N}$), where ${\Gamma(.)}$ is the ${\Gamma}$-function.
\\[1mm]
{\bf Annulus domains in ${\bb R}^{n}$}. %We study our domains without
                                %scalar dimensions (excluding the
                                %dimension of ${\bb R}^{n}$).
%As usual we pick the annuli (the sherical shells) with fixed gap
%width $1$ using our non-dimensional parameter ${\cal A}$.
%%
For any ${\cal A}\,\in\,(0,\infty)$, we denote the corresponding annulus (the sherical shell) 
by
\[
{\Omega}_{_{(n)},{\cal A}}\,:=\{{\underline{x}} \in {\bb R}^{n}:\,{\cal A}/{2}
<\|{\underline{x}}\|<1+{\cal A}/{2}\}\,.
\]
Its boundary 
$\partial{\Omega}_{_{(n)},
\cal A}$ consist of the two parts, namely, 
$\omega_{{(n)},{\cal A}/{2}}$ and  $\omega_{(n),{1+{\cal A}/{2}}}\,.$  Moreover,
 for all   
${\sigma}\,\in\,(0,1)$ we introduce the family of annuli \[
{\Omega}^{*}_{(n),\sigma}:=\{{\underline{x}} \in {\bb
  R}^{n}:\,0<\sigma<\|{\underline{x}}\|\,<1\}\,.
\]
In this notation we can directly use our results from \cite{RumTh2024}
which is more convenient for the case ${\cal A}\,\to\,0$.
The advantage is that these $n$-dimensional annuli are subsets of the
unit ball 
${\Omega}^{*}_{(n)}$. They have  the boundary
$\partial{\Omega}^{*}_{(n),\sigma}=
\omega_{(n),\sigma} \cup \omega_{(n)}$. 
\\[1mm]
%%%%%%%%%%%%%%%%%%%%%%%%
{\bf General notation B.} Let ${\Omega}$ stand as shorthand for any of the domains
defined above %${\Omega}_{{\cal A}},\, {\Omega}^{*}_{\sigma},$ and ${\Omega}^{*}$
and the abbreviation $(.)$ for $({\Omega})$, respectively. 
We consider the usual Lebesgue and Sobolev spaces ${\bb L}_{2}(.)$ and
${\bb W}_{2}^{k}(.)$ 
of scalar functions and %the Lebesgue and Sobolev spaces 
${\underline{\bb L}}_{{\;\!}{2}}(.)=({\bb L}_{2}(.))^{n}$
and ${\underline{\bb W}}_{{\;\!}{2}}^{k}(.)=({\bb
  W}_{2}^{k}(.))^{n}$ of vector functions. 
The norm in ${\bb L}_{2}(.)$ is denoted by $\| . \|_{2}$,
${\bb W}_{2}^{1}\hspace{-.62cm}{~}^{{~}^{{~}^{o}}}\hspace{.2cm}(.)$ is
the closure of $C_{o}^{\infty}(.)$ 
in ${\bb W}_{2}^{1}(.)$.  
All solenoidal vector functions belonging to 
${\underline{C}}_{{\;\!}{o}}^{\infty}(.)$ form $\underline{\cal V}(.)$. The closures
of $\underline{\cal V}(.)$ in ${\underline{\bb L}}_{{\;\!}{2}}(.)$ 
and
${\underline{\bb W}}_{{\;\!}{2}}^{1}(.)$, respectively, are denoted by
${\underline{\bb H}}(.)$ 
and ${\underline{\bb V}}(.)$, respectively.  We use the spherical
Bessel functions ${J_{k}(.)}$ of the first kind as well as 
the spherical Bessel functions  ${J_{-k}(.)}$ 
of order  $k\,\in\, \{\frac{1}{2}+m,\,m\,\in\, {\bb N}_{o}\} $ resp. Weber's functions  ${Y_{k}(.)}$ of the order  
$k\,\in\, {\bb N}_{o}$. The  Weber functions ${Y_{k}(.)}$ of the order  
$k\,\in\, {\bb N}_{o}$ are also called 
Bessel functions of the second kind or sometimes
Neumann functions (e.g., \cite{AAR}, \cite{CouHil}, \cite{Triebel}
etc.).
%%%%%%%%%%%%%%%%%%
\begin{notation}\label{N3} In ${\bb
  R}^{n},\,n\geq 3$, let the unit vectors in the Cartesian coordinate
system %in ${\bb R}^{n},\,n\geq 3$
be given by ${\underline{\mathfrak{e}}}_{j}\,:=\,
(\delta_{j,1},\delta_{j,2}, \dots, \delta_{j,n})^{T}$  ($\forall\,j=1,2,\dots,n$, with
Kronecker's delta $\delta_{j,k}$). 
The polar coordinates are  $r$,  $\vartheta_{1}$, $\dots$ , $\vartheta_{n-2}$ and $\varphi$ with the corresponding unit vectors
${\underline{\mathfrak{e}}}_{{\;\!}r}$, ${\underline{\mathfrak{e}}}_{{\;\!}\vartheta_{1}}$, $\dots$, ${\underline{\mathfrak{e}}}_{{\;\!}\vartheta_{n-2}}$  and
${\underline{\mathfrak{e}}}_{{\;\!}\varphi}$.
If we denote by $\{{\underline{\mathfrak{e}}}_{r},
{\underline{\mathfrak{e}}}_{\vartheta_{1}}, \dots ,
{\underline{\mathfrak{e}}}_{\vartheta_{n-2}},{\underline{\mathfrak{e}}}_{\varphi}\}$
the system of these unit vectors in spherical polar coordinates then ${\underline{u}}$
is representable in both systems as\\[.1cm]
\hspace*{1.3cm}${\underline{u}}\,=\,\sum_{j=1}^{n} u_{j}{\underline{\mathfrak{e}}}_{j}\,=\, \sum_{j=1}^{n} u_{j,{\mathfrak{c}}}{\underline{\mathfrak{e}}}_{j}\,=\, 
 u_{r}{\underline{\mathfrak{e}}}_{r}+
 \sum_{k=1}^{n-2}
 u_{\vartheta_{k}}{\underline{\mathfrak{e}}}_{\vartheta_{k}}+
 u_{\varphi}{\underline{\mathfrak{e}}}_{\varphi}\,=\, 
 u_{r,{\mathfrak{s}}}{\underline{\mathfrak{e}}}_{r}+
\sum_{k=1}^{n-2}
 u_{\vartheta_{k},{\mathfrak{s}}}{\underline{\mathfrak{e}}}_{\vartheta_{k}}
+
 u_{\varphi,{\mathfrak{s}}}{\underline{\mathfrak{e}}}_{\varphi}$.\\[.1cm] 
The transformation from one coordinate system to the other is %can be written as 
${\underline{u}}_{\mathfrak{c}}\,=\,
{\underline{\underline{T}}}_{{\mathfrak{c}},{\mathfrak{s}}}{\underline{u}}_{\mathfrak{s}}$
or ${\underline{u}}_{\mathfrak{s}}\,=\,
{\underline{\underline{T}}}^{-1}_{{\mathfrak{c}},{\mathfrak{s}}}
{\underline{u}}_{\mathfrak{c}}\,=\,
{\underline{\underline{T}}}_{{\mathfrak{s}},{\mathfrak{c}}}{\underline{u}}_{\mathfrak{c}}$, respectively
% where we have used the concept of
(these use columns of coordinates).
The transformation matrices ${\underline{\underline{T}}}_{{\mathfrak{c}},{\mathfrak{s}}}$ and ${\underline{\underline{T}}}_{{\mathfrak{s}},{\mathfrak{c}}}$ are given in the Appendix. 
\end{notation}
\noindent The {\bf Poincar\'e-(Friedrichs-)inequalities} are the central tools
to ensure, that  the spaces ${\bb
  W}_{2}^{1}\hspace{-.62cm}{~}^{{~}^{{~}^{o}}}\hspace{.2cm}(.)$ and  
${\underline{\bb V}}(.)$ %additionally
can be equipped with equivalent
norms generated by the Dirichlet norms 
%(which otherwise would only be semi-norms):
\begin{align} \label{Dirichlet} 
\|u\|_{D}  :=
\Big(\sum_{k=1}^{n}\Big\|{\frac{\displaystyle{\partial
  u}}{\displaystyle{\partial x_{k}}}}\Big\|_{2}^{2} 
\Big)^{1/2}\quad \forall \, u\,\in\,{\bb
  W}_{2}^{1}\hspace{-.62cm}{~}^{{~}^{{~}^{o}}}\hspace{.2cm}(.)\,,\quad 
\|{\underline{u}}\|_{D,S}   := 
\Big(\sum_{j,k=1}^{n}
\Big\|
{\frac{\displaystyle{\partial u_{j}}}{\displaystyle{\partial x_{k}}}}
\Big\|_{2}^{2}
\Big)^{1/2}
\quad \forall \,  {\underline{u}}  \,\in\,{\underline{\bb V}}(.)\,,
\end{align}
where the so-called Frobenius inner product is part of the last
definition. 
Denoting by $c_{p}({\cal A})$ and $c_{p,S}({\cal A})$ the Poincar\'e
constants with respect to the spaces ${\bb
  W}_{2}^{1}\hspace{-.62cm}{~}^{{~}^{{~}^{o}}}\hspace{.2cm}({\Omega}_{{\cal
    A}})$ and ${\underline{\bb V}}({\Omega}_{{\cal A}})$,
respectively, the Poincar\'e-(Friedrichs-)inequalities are 
\begin{align}\label{Poinc}
\|u\|_{2}  \leq   c_{p}({\cal A})   \|u\|_{D} \quad\forall \, u\,
\in\,{\bb W}_{2}^{1}\hspace{-.62cm}{~}^{{~}^{{~}^{o}}}
\hspace{.2cm}({\Omega}_{\cal A})\,\quad
\mbox{and}\quad
\|{\underline{u}}\|_{{\underline{\bb L}}_{{\;\!}{2}}}
\leq c_{p,S}({\cal A})
\|{\underline{u}}\|_{D,S} \quad
\forall \,  {\underline{u}}  \,\in\,
{\underline{\bb V}}({\Omega}_{\cal A})\,.
\end{align}
%%%%
The Poincar\'e constants 
are related to the 
first eigenvalue of the Laplace or Stokes operator on the
${\Omega}_{\cal A}$-domains 
(with vanishing Dirichlet traces), respectively,
because of the relations (see the Ths. in Subsections 4.5.3 and 4.5.4 and Th. 3 in
6.1.5 in \cite{Triebel})
\begin{equation} \label{simple-tool}
c_{p}({\cal A}) = 
\big(\lambda_{1,L}({\cal A})\big)^{-1/2}
\quad
\,\quad \mbox{and}\quad
c_{p,S}({\cal A}) =
\big(\lambda_{1,S}({\cal A}) \big)^{-1/2} \,,
\end{equation}
where ${\lambda}_{1,L}({\cal A})$ and  ${\lambda}_{1,S}({\cal A})$
denote the first simple eigenvalue of the Laplace operator and the
first eigenvalue of the Stokes  
operator, respectively.  
We refer to \cite{RumTh2024} for details 
with respect to the eigenvalues and eigenfunctions of the Stokes
operator on ${\Omega}^{*}_{(3)}$ resp. ${\Omega}^{*}_{(3),\sigma}$ and
to \cite[Subsection 6.4.4]{Triebel} for the eigenvalues and
eigenfunctions of the Laplacian on %the open unit ball
${\Omega}^{*}_{(3)}$.  
\\[.3cm]
%%%%
Our paper is organised as follows:
We collect our theoretical fundament in
Section \ref{Sec2}. There we sketch 
the procedures to construct the Laplace as well as the Stokes operator
as Friedrichs' extension from the Poisson and the Stokes
problem, respectively. 
We benefit from the properties of operators with a pure real
point spectrum. 
We introduce the {\em Leray-Helmholtz projector} 
$\Upsilon : {\underline{\bb
    L}}_{{\;\!}2}(.)\,\longmapsto \,{\underline{\bb H}}(.)$ %as the projector onto solenoidal vector fields, 
as well as the criteria for the smallest eigenvalues
${\lambda}_{1,L}({\cal A})$, ${\lambda}_{1,S}({\cal A})$, 
and their corresponding eigenfunctions.
Section \ref{Inv_Lim}  is devoted to the limiting cases. We carefully
conduct the transition ${\cal A}\,\to\,0$ especially for 
${\lambda}_{1,L}({\cal A}\,\to\,0)$ in the form of
${\lambda}_{1,L}({\sigma}\,\to\,0)$. 
The investigations are performed with the Green's functions for
circular annuli ${\Omega}^{*}_{(n),\sigma}$ 
and for the unit ball ${\Omega}_{(n)}^{*}$.
The crucial result here is,
that as $\sigma\,\to\,0$ the  {\it{problem forgets}} the 
center point together with the boundary condition there.

The study of the behaviour for ${\lambda}_{1,S}({\cal A}\,\to\,0)$ is
much easier, because %of the vanishing of
the first eigenfunction of the Stokes operator for
${\Omega}_{(n)}^{*}$ vanishes in  
${\underline{x}}={\underline{0}}$.
Finally, by simple transformations we show, that the cases
${\lambda}_{1,L}({\cal A}\,\to\,\infty)$ 
and ${\lambda}_{1,S}({\cal A}\,\to\,\infty)$ are covered by the
so-called {\em small gap limit.} In Section \ref{Sec4} the values of
the Poincar\'e constants   
$c_{p}({\cal A})\,=\,{\frac{1}{\pi}}$ 
and the calculated values for the Poincar\'e constants $c_{p,S}({\cal A})$ are 
represented as a graph  for
${\cal A}\in [0,\infty)$. 
%%%%%%%%%%%
\section{Theoretical groundwork}\label{Sec2}
\subsection{Available Bounds for the Poincaré constant}\label{Sec2avBou}
%%%%%%%%%
In \cite{RuRuTh2016} and \cite{RuRuTh2025}	we collected available
rules of thumbs for bounds for the Poincaré constant  $c_p$ on domains
$\Omega\subset \bb 
R^3$. A typical example (see, e.g., \cite[Th.~4.1]{galdi1998}) is 
$c_p \leq {\text{diam}(\Omega)}/2$, i.e., it depends on the diameter
of the considered domain $\Omega$. Another bound for
${\Omega}_{(3),\cal A}$ is found applying a result from
\cite{nazarov2000}, namely, 
\begin{equation*}
\label{eq:bound_new_3}
  c_p \leq {\frac{R_o}{R_i}} \, \frac{1}{\pi}
=\frac{1}{\pi} \, \left( {1 + \frac{2}{\mathcal A}}\right)\,.
\end{equation*}
One can extrapolate this for any $n\,\in\,{\bb N},\, n\,\geq \,3$, 
${\Omega}_{(n),\cal A}\,\subset\,\bb R^n $ and ${\cal A}\in (0,\infty)$ as:
\begin{equation}
\label{eq:bound_new_n}
  c_p \leq \,\frac{1}{\pi} \left({\frac{R_o}{R_i}}\right)^{\frac{n-1}{2}} 
=\,\,\frac{1}{\pi} \, \left( {1 + \frac{2}{\mathcal A}}\right)^{\frac{n-1}{2}}\,.
\end{equation}
The proof is a simple repetition of our proof
in \cite{RuRuTh2025}. 
The point is, that  ${\lambda}_{1,L}({\cal A})=(c_p({\cal A}))^{-{2}}$ and that the Poincaré constant 
attains its maximal value on a radial function
$\tilde w({r}) = \tilde w(\|{\underline{x}}\|)=
w({\underline{x}})$  $\in\, {\bb
  W}_{2}^{1}\hspace{-.62cm}{~}^{{~}^{{~}^{o}}}\hspace{.2cm}({\Omega}_{(n),\cal
  A})$ :
\begin{align}
\label{eq:poincarepolar}
\hspace*{-1.62cm}
   c_p^2 \,=\, \max_w \frac{\int_{{\Omega}_{(n), \cal A}} \left|w\right|^2 \, 
d\underline{x}}{\int_{{\Omega}_{(n), \cal A}} ({\underline{\nabla}} w)^{T} \cdot {\underline{\nabla}} w \, d\underline{x}}
= \max_{\tilde{w}} \frac{\int_{{\Omega}_{(n), \cal A}} \!\!  r^{n-1} \left|\tilde w\right|^2 \, d \omega_{(n)}
  d {r}
  }
     {\int_{{\Omega}_{(n), \cal A}} \!\! r^{n-1} ({\underline{\nabla}} {\tilde{w}})^{T} \cdot {\underline{\nabla}} {\tilde{w}} \,d \omega_{(n)}
  d {r}
  }\,. 
\end{align}
Furthermore, for the one-dimensional
Laplace eigenvalue problem we know (e.g. from \cite{nazarov2000} )
that 
$\tilde w$ considered as a function of $r$ belongs to $ {\bb
  W}_{2}^{1}\hspace{-.62cm}{~}^{{~}^{{~}^{o}}}\hspace{.2cm}(\frac{\mathcal
  A}{2},1+\frac{\mathcal A}{2})$.  In particular, we conclude
\begin{equation}
\label{eq:poincareEins}
\left(\frac{1}{\pi}\right)^2 = 
\max_{\tilde w } 
\frac{
\int_{\frac{\mathcal A}{2}}^{1+\frac{\mathcal A}{2}}
\abs{\tilde w(r)}^2 \, dr}
{
\int_{\frac{\mathcal A}{2}}^{1+\frac{\mathcal A}{2}}
\abs{\tilde w'(r)}^2 \, dr}\,\,\,.
\end{equation}
Equation \eqref{eq:poincareEins} is significant in the proof of \eqref{eq:bound_new_n} and for large
$\mathcal A$ (i.e. the small gap limit ${\cal A}\,\to\,\infty$).
\subsection{Laplace and Stokes operators on  n-dimensional balls and  n-dimensional annuli} \label{sec_theo} 
%%%%%%%%%%%%%%%%%%%%%%%
In the following we take both symbols ${\Omega}$ and $(\cdot)$ as
placeholders (as previously in General notation B).
\begin{definition}\label{D4} {\em The Laplace operator is
  defined as 
\begin{align*}
{\boldsymbol L^{\circledast}}\,{v} := -\Big(
{\displaystyle{\frac{{\partial}^{2} v}{\partial x_{1}^{2}}}}+{\displaystyle{\frac{{\partial}^{2} v}{\partial x_{2}^{2}}}}+\dots\,
{\displaystyle{\frac{{\partial}^{2} v}{\partial x_{n}^{2}}}}\Big)
=- \Delta{\;\!}_{\underline{x} } v
\hspace*{0.7cm}  \forall\,v\,\in\,D({\boldsymbol
  L^{\circledast}})=C_{o}^{\infty}({\Omega})\,. 
\end{align*}
We denote Friedrichs' extension of ${\boldsymbol L^{\circledast}}$
by ${\boldsymbol 
  L}:={\overline{\boldsymbol L^{\circledast}}}$, where ${\boldsymbol
  L}$ is defined on 
$D({\boldsymbol L})\,:=\,{\bb
  W}_{2}^{1}\hspace{-.62cm}{~}^{{~}^{{~}^{o}}}\hspace{.2cm}({\Omega}) 
\cup {\bb W}_{2}^{2}({\Omega})$.}
\end{definition}
\begin{remark} The range of ${\boldsymbol L}$ is
$R({\boldsymbol L})={\bb L}_{2}({\Omega})$. In this sense we may write:
${\boldsymbol L}=-\Delta{\;\!}_{\underline{x} } : D({\boldsymbol
  L})\,\longmapsto \,{\bb L}_{2}(.)$.
\end{remark}
%%%%%%
\noindent We need the Leray-Helmholtz projection $\Upsilon$ to define the Stokes operator. 
$\Upsilon$ is the well-defined
projector of ${\underline{\bb L}}_{{\;\!}2}(.)$ onto its subspace 
${\underline{\bb H}}(.)$
of generalised solenoidal fields with vanishing generalised traces in
the normal direction. %of the boundary. 
We note, that %the Leray-Helmholtz projector $\Upsilon$
it is also used in the sense of:
$\Upsilon :{\underline{\bb W}}_{{\;\!}2}^{1}(.)\,\longmapsto
\,{\underline{\bb V}}(.)$\,.
\begin{definition}\label{D5} 
{\em The Stokes operator is defined as
$
{\boldsymbol S^{\circledast}}\,{\underline{v}}:=-
\Delta_{\underline{x} } {\underline{v}}\hspace*{0.5cm}
\forall\,{\underline{v}}\in D({\boldsymbol
  S^{\circledast}})=\underline{\cal V}({\Omega})\, 
$, where ${\underline{v}}\,=\,{\underline{v}}_{{\;\!}\mathfrak{c}}$ is written in Cartesian coordinates
and the vector Laplace operator $\Delta_{\underline{x} }$ acts as a
scalar on each component. 
We denote Friedrichs' extension of ${\boldsymbol S^{\circledast}}$ by 
${\boldsymbol S}:={\overline{\boldsymbol S^{\circledast}}}$, where
${\boldsymbol S}$ is defined on its domain 
$D({\boldsymbol S}):={\underline{\bb S}}_{{\;\!}}^{2}(.)=
{\underline{\bb W}}_{{\;\!}2}^{2}(.)\cap{\underline{\bb V}}(.)$ \,.} 
\end{definition}
\begin{remark} The range of ${\boldsymbol S}$ is $R({\boldsymbol
  S})={\underline{\bb H}}(.) $. In this context one may write
${\boldsymbol S} =-\Upsilon \Delta{\;\!}_{\underline{x}
}:{\underline{\bb S}}_{{\;\!}}^{2}(.) 
\,\longmapsto \,{\underline{\bb H}}(.)$. In Definition \ref{D5} one
can also use the Laplace operator in spherical polar
coordinates $\Delta_{r,\vartheta_{1},\dots\,\vartheta_{n-2},\varphi}$
(cf. Remark \ref{Lapldimn}). %like a scalar  instead of
                            %$\Delta_{\underline{x} }$, where
                            %${\underline{v}}\,=\,{\underline{v}}_{{\;\!}\mathfrak{c}}$
                            %has to be prescribed.
We avoid to
choose spherical polar
coordinates for both, $\Delta$
and  ${\underline{v}}\,=\,{\underline{v}}_{{\;\!}\mathfrak{s}}$, since
the vector Laplacian when  
applied to ${\underline{v}}_{{\;\!}\mathfrak{s}}\,$ produces
convoluted tensor fields in this combination 
(see \cite{MoonSpencer}).
\end{remark}
%%%%%%%%%%%
\noindent We sketch the fundamental properties of both operators (i.e. ${\boldsymbol L}$
as well as ${\boldsymbol S}$) using ${\boldsymbol S}$ as an example.
\begin{theorem} \label{thmstok}
The Stokes operator ${\boldsymbol S}$ is positive and self-adjoint.
Its inverse ${\boldsymbol S}^{-1}$ is injective, self-adjoint and compact.
\end{theorem}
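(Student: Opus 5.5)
The argument runs through the closed quadratic form of $\boldsymbol S^{\circledast}$, i.e.\ the usual Friedrichs construction. For $\underline u,\underline v\in\underline{\cal V}(\Omega)$, integration by parts componentwise in Cartesian coordinates gives
\begin{equation*}
(\boldsymbol S^{\circledast}\underline u,\underline v)_{\underline{\bb L}_2}
=\sum_{j,k=1}^{n}\Big(\frac{\partial u_j}{\partial x_k},\frac{\partial v_j}{\partial x_k}\Big)_2
=:a(\underline u,\underline v),
\end{equation*}
since the boundary terms vanish for compactly supported fields. So $\boldsymbol S^{\circledast}$ is symmetric and nonnegative on the dense subspace $\underline{\cal V}(\Omega)$ of $\underline{\bb H}(\Omega)$. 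By the (componentwise) Poincar\'e--Friedrichs inequality \eqref{Poinc}, which holds on every bounded $\Omega$ among our domains, we get $a(\underline u,\underline u)=\|\underline u\|_{D,S}^2\ge c^{-2}\|\underline u\|^2_{\underline{\bb L}_2}$. Hence $\boldsymbol S^{\circledast}$ is semibounded with a strictly positive lower bound.

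The form $a$ is closable, and the closure of $\underline{\cal V}(\Omega)$ in the form norm is exactly $\underline{\bb V}(\Omega)$, because the form norm is equivalent to the $\underline{\bb W}_2^1$-norm. Friedrichs' theorem (as in \cite{Triebel}) then yields a self-adjoint extension $\boldsymbol S$ with $(\boldsymbol S\underline u,\underline u)\ge c^{-2}\|\underline u\|^2$. This gives positivity and self-adjointness. I would identify $D(\boldsymbol S)$ with $\underline{\bb W}_2^2\cap\underline{\bb V}$ only afterwards, by citing regularity for the Stokes system on smooth bounded domains. That identification is not needed for the statement itself.

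For the inverse, the Lax--Milgram lemma on $(\underline{\bb V}(\Omega),a)$ shows that for every $\underline f\in\underline{\bb H}(\Omega)$ there is a unique $\underline u\in\underline{\bb V}(\Omega)$ with $a(\underline u,\underline v)=(\underline f,\underline v)$ for all $\underline v\in\underline{\bb V}(\Omega)$, and this $\underline u$ equals $\boldsymbol S^{-1}\underline f$. From the lower bound, $\boldsymbol S$ is injective and $\boldsymbol S^{-1}$ is bounded with $\|\boldsymbol S^{-1}\|\le c^2$. The inverse is injective because it is an inverse map: $\boldsymbol S^{-1}\underline f=0$ forces $\underline f=\boldsymbol S\,0=0$. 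It is self-adjoint because it is a bounded, everywhere defined inverse of a self-adjoint operator; concretely, $(\boldsymbol S^{-1}\underline f,\underline g)=a(\boldsymbol S^{-1}\underline f,\boldsymbol S^{-1}\underline g)=(\underline f,\boldsymbol S^{-1}\underline g)$.

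The step I expect to need the most care is compactness. Taking $\underline v=\underline u$ above gives $\|\underline u\|_{D,S}^2\le\|\underline f\|\,\|\underline u\|\le c\|\underline f\|\,\|\underline u\|_{D,S}$, so $\boldsymbol S^{-1}$ maps $\underline{\bb H}$ boundedly into $\underline{\bb V}\subset\underline{\bb W}_2^{1}$, and in fact into $\big({\bb W}_{2}^{1}\hspace{-.62cm}{~}^{{~}^{{~}^{o}}}\hspace{.2cm}(\Omega)\big)^n$. Because $\Omega$ is bounded, the Rellich--Kondrachov theorem makes the embedding of this space into $\underline{\bb L}_2(\Omega)$ compact; for functions with zero trace this needs no boundary regularity. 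Composing the bounded map with the compact embedding gives compactness of $\boldsymbol S^{-1}$ as an operator on $\underline{\bb H}(\Omega)$, using that $\underline{\bb V}(\Omega)\subset\underline{\bb H}(\Omega)$ and that $\underline{\bb H}(\Omega)$ is closed in $\underline{\bb L}_2(\Omega)$.

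The point to check is that the range of $\boldsymbol S$ is $\underline{\bb H}(\Omega)$, so that the weak solution is genuinely $\boldsymbol S^{-1}\underline f$ and not merely the result of the Leray projection $\Upsilon$. This is precisely the Friedrichs characterisation: the operator associated with the form is taken on the Hilbert space $\underline{\bb H}(\Omega)$. The same argument, with $\underline{\bb V}$ replaced by ${\bb W}_{2}^{1}\hspace{-.62cm}{~}^{{~}^{{~}^{o}}}\hspace{.2cm}(\Omega)$, gives the corresponding statements for $\boldsymbol L$.
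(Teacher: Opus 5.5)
Your argument is correct and takes essentially the paper's route. The paper only defers to Theorems 4.3 and 4.4 of \cite{CoFoi}, whose key tools are the ones you use: the Lax--Milgram lemma on $\underline{\bb V}(\Omega)$, with the Poincar\'e inequality giving coercivity, to construct $\boldsymbol S^{-1}$, and the Rellich theorem for its compactness; your explicit Friedrichs form construction spells out what the paper's definition of $\boldsymbol S$ as Friedrichs' extension already assumes.
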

\noindent The proof of Theorem \ref{thmstok} is a simple modification of 
Theorems 4.3 and 4.4 in \cite{CoFoi}. The essential tools are
the Rellich theorem and the Lax-Milgram lemma.
The well-known theorem of Hilbert (see, e.g. \cite{CouHil}) and
regularity results like \cite[Prop.~I.2.2]{Temam}
lead to more precise results, namely:
\begin{corollary}
\label{STOeiFU}
The Stokes operator only has a point spectrum.
 All eigenvalues $\lambda_{j}$
of  ${\boldsymbol S}$ are real and of finite multiplicity.
The associated eigenfunctions 
$\{{\underline{w}}_{j}({\underline{x}})\}_{j=1}^{\infty}$
% of the Stokes operator ${\boldsymbol S}$
(counted in multiplicity)
are an orthogonal basis of
${\underline{\bb H}}(.)$ and ${\underline{\bb V}}(.)$, i.e.
\begin{align*}
{{(a)}}&\quad {\boldsymbol 
S}{\underline{w}}_{j}:=\lambda_{j}{\underline{w}}_{j}\quad\mbox{for
}\quad{\underline{w}}_{j}\in D({\boldsymbol S})
\quad\forall\,j\in \NN\\
{ (b)}& \quad
0\,<\lambda_{1}\leq\,\lambda_{2}\,\leq\cdots\leq\,\lambda_{j}
\,\leq\cdots\quad\mbox{and}\quad 
\lim_{j\rightarrow\infty}\lambda_{j}=\infty
\\[-2mm]
{(c)}&\quad
\|{\underline{w}}_{j}\|_{{\underline{\bb 
H}}}\,=1\quad\forall \, j\in \NN\,.
\end{align*}
\end{corollary}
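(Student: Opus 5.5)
The plan is to apply the Hilbert--Schmidt spectral theorem for compact self-adjoint operators to ${\boldsymbol S}^{-1}$, which Theorem \ref{thmstok} supplies, and then translate the conclusions back to ${\boldsymbol S}$. The ambient Hilbert space is ${\underline{\bb H}}(.)$. By Theorem \ref{thmstok}, ${\boldsymbol S}^{-1}:{\underline{\bb H}}(.)\to {\underline{\bb H}}(.)$ is compact, self-adjoint and injective, and it is positive because ${\boldsymbol S}$ is.

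\textbf{Step 1: spectral theorem for ${\boldsymbol S}^{-1}$.} I invoke Hilbert's theorem \cite{CouHil}. The spectrum of ${\boldsymbol S}^{-1}$ away from $0$ consists of real eigenvalues $\mu_j$ of finite multiplicity, and these can accumulate only at $0$. Since ${\boldsymbol S}^{-1}$ is injective, $0$ is not an eigenvalue. Hence the normalized eigenvectors ${\underline{w}}_j$, counted with multiplicity, form a complete orthonormal system of ${\underline{\bb H}}(.)$. Positivity gives $\mu_j>0$. The space ${\underline{\bb H}}(.)$ is infinite-dimensional (it contains $\underline{\cal V}(.)$), so there are infinitely many $\mu_j$, and ordered decreasingly they tend to $0$.

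\textbf{Step 2: transfer to ${\boldsymbol S}$.} Set $\lambda_j:=\mu_j^{-1}$. Then ${\underline{w}}_j={\boldsymbol S}^{-1}(\lambda_j{\underline{w}}_j)$, so ${\underline{w}}_j\in R({\boldsymbol S}^{-1})=D({\boldsymbol S})$ and ${\boldsymbol S}{\underline{w}}_j=\lambda_j{\underline{w}}_j$. This gives (a) and (c). It also gives (b): the $\lambda_j$ are real and positive, have finite multiplicity, can be ordered increasingly, and satisfy $\lambda_j\to\infty$.

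For ``only point spectrum'', take $\lambda\notin\{\lambda_j\}$. The resolvent $({\boldsymbol S}-\lambda)^{-1}$ can be written down in the eigenbasis as multiplication by $(\lambda_j-\lambda)^{-1}$. This is bounded because the $\lambda_j$ have no finite accumulation point, so $\lambda$ lies in the resolvent set. The regularity $D({\boldsymbol S})={\underline{\bb W}}_{2}^{2}(.)\cap{\underline{\bb V}}(.)$ of the eigenfunctions is where \cite[Prop.~I.2.2]{Temam} enters.

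\textbf{Step 3: orthogonal basis of ${\underline{\bb V}}(.)$.} I use the energy inner product $({\boldsymbol S}{\underline{u}},{\underline{v}})=(\nabla{\underline{u}},\nabla{\underline{v}})$. Integrating by parts against ${\underline{v}}\in{\underline{\bb V}}(.)$ gives
\[
(\nabla {\underline{w}}_j,\nabla {\underline{w}}_k)=\lambda_j({\underline{w}}_j,{\underline{w}}_k)=\lambda_j\delta_{jk},
\]
so the family is orthogonal in ${\underline{\bb V}}(.)$. For completeness, suppose ${\underline{v}}\in{\underline{\bb V}}(.)$ satisfies $(\nabla{\underline{v}},\nabla{\underline{w}}_j)=0$ for all $j$. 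Then $\lambda_j({\underline{v}},{\underline{w}}_j)=0$ for all $j$, and completeness in ${\underline{\bb H}}(.)$ forces ${\underline{v}}=0$.

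\textbf{Main obstacle.} The delicate point is the integration by parts in Step 3. It requires the identity $({\boldsymbol S}{\underline{w}},{\underline{v}})=(\nabla{\underline{w}},\nabla{\underline{v}})$ for ${\underline{w}}\in D({\boldsymbol S})$ and ${\underline{v}}\in{\underline{\bb V}}(.)$. This holds by the construction of ${\boldsymbol S}$ as the Friedrichs extension: its form domain is ${\underline{\bb V}}(.)$ and its form is the Dirichlet form. Using this, the argument never has to handle the pressure gradient hidden inside $\Upsilon$.
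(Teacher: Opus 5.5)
Your proposal is correct and takes the paper's route. The paper only sketches this corollary: it applies Hilbert's theorem \cite{CouHil} to the compact, self-adjoint, injective inverse ${\boldsymbol S}^{-1}$ from Theorem~\ref{thmstok} and cites the regularity result \cite[Prop.~I.2.2]{Temam}. You supply the details the paper leaves implicit: the passage to $\lambda_j=\mu_j^{-1}$, the resolvent argument for pure point spectrum, and orthogonality and completeness in ${\underline{\bb V}}(.)$ via the Dirichlet form of the Friedrichs extension.
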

%%%%%%%%
\subsection{Eigenvalues and Eigenfunctions\label{sec_eigv}}
%%%%%%%%
\noindent 
We note, that formulas for the
complete sets of Laplace and  
Stokes eigenfunctions on the annuli ${\Omega}^{*}_{(n), \sigma}$ 
and on the unit ball ${\Omega}_{(n)}^{*}$ as subsets of ${\bb R}^{3}$
resp. ${\bb R}^{2}$ are derived in \cite{LeeRu}, \cite{RumKug1}, and 
\cite{RumTh2024}. The eigenvalues are obtained
as the squares of roots of transcendental
equations which are derived.
The transformation to the ${\Omega}_{(n), {\cal A}}$-domains then works as in \cite{RuRuTh2016}.
It is worth to note, that the roots of the derived transcendental equations behave like the roots of the Bessel functions:
Their consecutive zeros separate interdependently. 
In what follows we combine the Theorem from Subsection 6.4.4 in
\cite{Triebel} for the Laplace eigenfunctions  
on the unit ball ${\Omega}_{(n)}^{*}$ in ${\bb R}^{n}$ with new
ideas to obtain a solution of the Laplace 
 boundary value problem as a linear combination of the two linear independent solutions of the corresponding
Bessel's differential equation. We are going to sketch the ideas as well as the results below. 
We will use Bessel functions to study the eigenvalues and the Poincar\'e-constants 
for ${\cal A}\,\in\,(0,\infty)$ on
${\Omega}\,=\,{\Omega}_{(n), {\cal A}}$.
We employ the Bessel functions ${{{J_{k}}}(.)}$ and the Weber functions ${{{Y_{k}}}(.)}$ (cf. {{General notation B}})
of the order 
$k\,\in\,  {\bb N}_{o}$ if 
${\Omega}_{(n), {\cal A}}\subset {\bb R}^{n}$, where ${n}$ is an
even number, since the functions ${{{J_{k}}}(.)}$ and ${{{J_{-k}}}(.)}$ are linearly dependent in these
cases. The spherical
Bessel functions ${{{J_{k}}}(.)}$ of the first kind as well as 
the spherical Bessel functions  ${{{J_{-k}}}(.)}$ of the order 
{{$k\,\in\, \{\frac{1}{2}+m,\,m\,\in\, {\bb N}_{o}\} $}} are linearly
independent if
${\Omega}_{(n), {\cal A}}\subset {\bb R}^{n}$ and ${n}$ is odd.  They were used for 
${\Omega}_{(n), {\cal A}}\subset {\bb R}^{n}$ for ${n}\in\,{\bb N}$
odd.
Let us repeat some results of \cite{RuRuTh2016} and
\cite{RuRuTh2025}.% like a base clause for the extension to ${\Omega}_{(n),{\cal A}}\subset {\bb R}^{n}, \,n>3$.
\begin{note} \label{Result_n=2} Consider ${\Omega}_{(2),{\cal A}}\subset {\bb R}^{2}$. 
For arbitrary ${\cal A}\,\in\,(0,\infty)$ the first simple eigenvalue
${\lambda}_{1,L}({\cal A})$ of the 
Laplace (resp.~the first eigenvalue ${\lambda}_{1,S}({\cal A})$ of  the Stokes) operator
on ${\Omega}_{(2), {\cal A}}$
are the squares of the smallest positive solutions
$\kappa_{1,L}({\cal A})$ and 
$\kappa_{1,S}({\cal A})$, respectively,
of the transcendental equations (see \cite{RumZyl} and \cite{LeeRu})
\begin{align}\label{trans1n=2}
0&=\,J_{0}(\kappa_{L}({\cal A}){(1+{\cal
   A}/{2}}))Y_{0}(\kappa_{L}({\cal A}){{\cal A}/{2}}) 
\,-\,J_{0}(\kappa_{L}({\cal A}){{\cal
   A}/{2}})Y_{0}(\kappa_{L}({\cal A}){(1+{\cal A}/{2}}))\,\, \quad \, \text{ and } 
\\[.15cm]
\label{trans2n=2}
0&=\,J_{1}(\kappa_{S}({\cal A})({1+{\cal
   A}/{2
   }}))Y_{1}(\kappa_{S}({\cal A}){{\cal A}/{2}}) 
\,-\,J_{1}(\kappa_{S}({\cal A}){{\cal
   A}/{2}})Y_{1}(\kappa_{S}({\cal A})({1+{\cal A}/{2}}))\,.
\end{align}
After the transformation to ${\sigma}$ we find the equivalent roots
$\kappa_{1,L}({\sigma})$ and  
$\kappa_{1,S}({\sigma})$, resp., as
the smallest positive solutions of 
\begin{align}\label{trans1sigman=2}
0&=\,J_{0}(\kappa_{L}({\sigma}))Y_{0}(\kappa_{L}({\sigma}){\sigma})
\,-\,J_{0}(\kappa_{L}({\sigma}){\sigma})Y_{0}(\kappa_{L}({\sigma}))\,\, \quad \, \text{ and } 
\\[.15cm]
\label{trans2sigman=2}
0&=\,J_{1}(\kappa_{S}({\sigma}))Y_{1}(\kappa_{S}({\sigma}){\sigma})
\,-\,J_{1}(\kappa_{S}({\sigma}){\sigma})Y_{1}(\kappa_{S}({\sigma}))\,.
\end{align}
The conversion formulas between the roots are obvious for all ${\cal
  A}\,\in\,(0,\infty)$ and for the smallest ones we see  that
\begin{align}\label{kappa_A_sigma}
\kappa_{1,L}({\sigma})=({1+{\cal A}/{2}}) \kappa_{1,L}({\cal
  A})
\qquad \mbox{and}\qquad
\kappa_{1,S}({\sigma})  =({1+{\cal A}/{2}})
\kappa_{1,S}({\cal A})\,.
\end{align}
The eigenfunctions in ${\Omega}_{(2),{\cal A}}$ are %${{w}}_{{\;\!}1,L,{\cal A}}({\underline{x}})$ and ${\underline{w}}_{{\;\!}1,S,{\cal A}}({\underline{x}})$ in${\Omega}_{(2),{\cal A}}$  are given by
\begin{align}
{{w}}_{{\;\!}1,L,{\cal A}}({\underline{x}})&=\tilde{c}_{1,L,{\cal A}}\left(
J_{0}(\kappa_{1,L}({\cal
  A}) r)\,-\,\,
\frac{J_{0}({\textstyle{\frac{ \kappa_{1,L}({\cal A}) {\cal A}}{2}}})}
{Y_{0}({\textstyle{\frac{ \kappa_{1,L}({\cal A}) {\cal A}}{2}}})}
Y_{0}( \kappa_{1,L}({\cal A}) r
)\right) \,,\label{eiflaplace2} %\quad\quad \text{and}
\\
%%%%%%%%%%%%%%%%%%%%
{\underline{w}}_{{\;\!}1,S,{\cal A}}({\underline{x}})&={\tilde{c}_{1,S,{\cal A}}}
\left(
J_{1}(\kappa_{1,S}({\cal A})r)\,-\,
\frac{J_{1}({\textstyle{\frac{ \kappa_{1,S}({\cal A}) {\cal A}}{2}}})}
{Y_{1}({\textstyle{\frac{ \kappa_{1,S}({\cal A}) {\cal A}}{2}}})}
Y_{1}( \kappa_{1,S}({\cal A}) r
)
\right){\underline{\mathfrak{e}}}_{{\;\!}\varphi}\,.
\label{eifstokes2}
\end{align}
The numbers $\tilde{c}_{1,L,{\cal A}}$ and $\tilde{c}_{1,S,{\cal A}}$ are
scaling constants to ensure that the ${{\bb L}}_{{2}}(.)$- 
resp. ${\underline{\bb L}}_{{\;\!}{2}}(.)$-norms are $1$.
\end{note} 
%%%%%
%\noindent We turn to the case ${\Omega}_{(3),{\cal A}}$ in ${\bb R}^{3}$.
\begin{note} \label{Result_n=3} Consider ${\Omega}_{(3),{\cal A}}\subset {\bb R}^{3}$. 
For any ${\cal A}\,\in\,(0,\infty)$ the first simple eigenvalue
${\lambda}_{1,L}({\cal A})$ of the 
Laplace (resp.~the first eigenvalue ${\lambda}_{1,S}({\cal A})$ of the Stokes) operator
on ${\Omega}_{(3),{\cal A}}$ are the squares of the smallest positive solutions
$\kappa_{1,L}({\cal A})$ ($\kappa_{1,S}({\cal A})$, resp.)
of the transcendental equations 
(cf. \cite{RumTh2024})
\begin{align}
0&=\,J_{\frac{1}{2}}(\kappa_{L}({\cal A}){(1+{\cal
   A}/{2}}))J_{-\frac{1}{2}}(\kappa_{L}({\cal A}){{\cal A}/{2}}) 
\,-\,J_{\frac{1}{2}}(\kappa_{L}({\cal A}){{\cal
   A}/{2}})J_{-\frac{1}{2}}(\kappa_{L}({\cal A}){(1+{\cal A}/{2}}))\nonumber
\\
\label{trans1n=3}
&=\,\frac{2}{\pi \kappa_{L}({\cal A})} [({\cal A}/{2})(1+{\cal A}/{2})]^{-\frac{1}{2}}
\sin(\kappa_{L}({\cal A}))   , 
\\
\label{trans2n=3}
0&=\,J_{\frac{3}{2}}(\kappa_{S}({\cal A})({1+{\cal
   A}/{2
   }}))J_{-\frac{3}{2}}(\kappa_{S}({\cal A}){{\cal A}/{2}}) 
\,-\,J_{\frac{3}{2}}(\kappa_{S}({\cal A}){{\cal
   A}/{2}})J_{-\frac{3}{2}}(\kappa_{S}({\cal A})({1+{\cal A}/{2}}))\,. 
\end{align}
In \cite{RuRuTh2025} we have proved that for all ${\cal
  A}\,\in\,[0,\infty)$ the solution is $\kappa_{1,L}({\cal
  A})\,=\,\pi$.
After the transformation to ${\sigma}$ we find the equivalent solutions
$\kappa_{1,L}({\sigma})$ and  
$\kappa_{1,S}({\sigma})$ as
the smallest positive solutions of 
\begin{align}
0&=\,J_{\frac{1}{2}}(\kappa_{L}({\sigma}))J_{-\frac{1}{2}}(\kappa_{L}({\sigma}){\sigma})
\,-\,J_{\frac{1}{2}}(\kappa_{L}({\sigma}){\sigma})J_{-\frac{1}{2}}(\kappa_{L}({\sigma}))\,=\,
\frac{2}{\pi \kappa_{L}({\sigma})\sqrt{\sigma}}
\sin((1-{\sigma})\kappa_{L}({\sigma}))   , \label{trans1sigman=3}
\\
\label{trans2sigman=3}
0&=\,J_{\frac{3}{2}}(\kappa_{S}({\sigma}))J_{-\frac{3}{2}}(\kappa_{S}({\sigma}){\sigma})
\,-\,J_{\frac{3}{2}}(\kappa_{S}({\sigma}){\sigma})J_{-\frac{3}{2}}(\kappa_{S}({\sigma}))\,.
\end{align}
The conversion formulas are Equations \eqref{kappa_A_sigma} above.  
Now we choose the notation
$$
{\underline{\mathfrak{w}}}_{{\;\!}0}\,=\,\sin(\vartheta_1){\underline{\mathfrak{e}}}_{{\;\!}\varphi}\,, \quad{\underline{\mathfrak{w}}}_{{\;\!}-1}\,=\,cos(\varphi){\underline{\mathfrak{e}}}_{{\;\!}\vartheta_1}
-\sin(\varphi)\cos(\vartheta_1){\underline{\mathfrak{e}}}_{{\;\!}\varphi}\,,\quad {\mbox{and}} \,
\quad
{\underline{\mathfrak{w}}}_{{\;\!}1}\,=\,sin(\varphi){\underline{\mathfrak{e}}}_{{\;\!}\vartheta_1}
+\cos(\varphi)\cos(\vartheta_1){\underline{\mathfrak{e}}}_{{\;\!}\varphi}\,.\nonumber
$$
The eigenfunctions %${{w}}_{{\;\!}1,L,{\cal A}}({\underline{x}})$ and
                   %${\underline{w}}_{{\;\!}1,\alpha,S,{\cal
                   %A}}({\underline{x}})$ in ${\Omega}_{(3), {\cal
                   %A}}$
(here $\alpha\in\{-1,0,1\})$ are 
\begin{align}
{{w}}_{{\;\!}1,L,{\cal A}}({\underline{x}})&=\frac{\tilde{c}_{1,L,{\cal A}}}{\sqrt{r}}\left(
J_{\frac{1}{2}}(\pi r)\,-\,
\frac{J_{{\frac{1}{2}}}({\textstyle{\frac{ \pi {\cal A}}{2}}})}{J_{-{\frac{1}{2}}}({\textstyle{\frac{ \pi {\cal A}}{2}}})}
J_{-\frac{1}{2}}(\pi r )\right)\,, \label{eiflaplacen=3} %\quad\quad \text{and}
\\
%%%%%%%%%%%%%%%%%%%%
{\underline{w}}_{{\;\!}1,\alpha,S,{\cal A}}({\underline{x}})&=\frac{\tilde{c}_{1,\alpha,S,{\cal A}}}{\sqrt{r}}
\left(
J_{\frac{3}{2}}(\kappa_{1,S}({\cal A})r)\,-\,
\frac{J_{{\frac{3}{2}}}({\textstyle{\frac{ \kappa_{1,S}({\cal A}) {\cal A}}{2}}})}
{J_{-{\frac{3}{2}}}({\textstyle{\frac{ \kappa_{1,S}({\cal A}) {\cal A}}{2}}})}
J_{-\frac{3}{2}}( \kappa_{1,S}({\cal A}) r
)
\right)
{\underline{\mathfrak{w}}}_{{\;\!}\alpha}(\vartheta_1,\varphi)\,.
\label{eifstokesn=3}
\end{align} 
Again, $\tilde{c}_{1,L,{\cal A}}$ and $\tilde{c}_{1,\alpha,S,{\cal A}}$ are 
scaling constants. 
The notations 
${\underline{\mathfrak{w}}}_{{\;\!}\alpha}$ 
refer to the spherical surface harmonics (cf. Remark \ref{SurfHarm0+1}). 
Here $\alpha$ counts as above, i.e., $\alpha\in\{-1,0,1\}$. For the smallest eigenvalue 
of the Stokes operator  one needs
only one eigenfunction in order to calculate $c_{p,S}({\cal A})$. We select
${\underline{\mathfrak{w}}}_{{\;\!}0}$ (see also Note \ref{Result_n=2}).
\end{note}
\subsection{First Eigenvalues and Eigenfunctions for the Laplace Operator \label{sec_lap_n}}
Now we use ideas
from \cite{RuRuTh2016} and \cite{RuRuTh2025} to formulate
%%%%%%%%%%%5
\begin{theorem} \label{thm_Lapl_n} 
For all $n\,\in\,{\bb N},\,n>3$ and for any ${\cal
  A}\,\in\,(0,\infty)$ the first simple eigenvalue 
${\lambda}_{1,L}({\cal A})$ of the 
Laplace operator on ${\Omega}_{(n),{\cal A}}$ is the square of the smallest positive solution
$\kappa_{1,L}({\cal A})$  
of the following transcendental equations:  
\begin{align}\label{Lapl_Allgem_EW}
0\,=\,\left\{\begin{array}{ll}
J_{\frac{n}{2}-1}(\kappa_{L}({\cal A})(1+\frac{\cal A}{2}))Y_{\frac{n}{2}-1}(\kappa_{L}({\cal A}) \frac{\cal A}{2})
\,-\,J_{\frac{n}{2}-1}(\kappa_{L}({\cal A}) \frac{\cal A}{2})Y_{\frac{n}{2}-1}(\kappa_{L}({\cal A})(1+\frac{\cal A}{2})) 
& \forall \,
n\mbox{ even}\,\\[2mm]
J_{\frac{n}{2}-1}(\kappa_{L}({\cal A})(1+\frac{\cal A}{2}))J_{-\frac{n}{2}+1}(\kappa_{L}({\cal A}) \frac{\cal A}{2})
\,-\,J_{\frac{n}{2}-1}(\kappa_{L}({\cal A}) \frac{\cal A}{2})J_{-\frac{n}{2}+1}(\kappa_{L}({\cal A})(1+\frac{\cal A}{2})) & \forall \,
n\mbox{ odd}\,.
\end{array}
\right.
\end{align}
The corresponding first eigenfunction is
\begin{align}\label{Lapl_Allgem_EF}
{{w}}_{{\;\!}1,L,{\cal A}}({\underline{x}})
\,=\,\frac{\tilde{c}_{1,L,{\cal A}}}{r^{\frac{n}{2}-1}}
\left\{
\begin{array}{lr}
\Big(
J_{\frac{n}{2}-1}(\kappa_{1,L}({\cal A})r)\,-\,
\frac{J_{{\frac{n}{2}}-1}({\textstyle{\frac{ \kappa_{1,L}({\cal A}) {\cal A}}{2}}})}
{Y_{{\frac{n}{2}}-1}({\textstyle{\frac{ \kappa_{1,L}({\cal A}) {\cal A}}{2}}})}
Y_{\frac{n}{2}-1}( \kappa_{1,L}({\cal A}) r
)
\Big)& \forall \,
n\mbox{ even}\,,\\[2mm]
\Big (J_{\frac{n}{2}-1}(\kappa_{1,L}({\cal A})r)\,-\,
\frac{J_{{\frac{n}{2}}-1}({\textstyle{\frac{ \kappa_{1,L}({\cal A}) {\cal A}}{2}}})}
{J_{-{\frac{n}{2}}+1}({\textstyle{\frac{ \kappa_{1,L}({\cal A}) {\cal A}}{2}}})}
J_{-\frac{n}{2}+1}( \kappa_{1,L}({\cal A}) r
)
\Big)&\forall \,
n\mbox{ odd}\,,
\end{array}
\right.
\end{align}
where, again, 
$\tilde{c}_{1,L,{\cal A}}$ denote corresponding
scaling constants. % in the ${{\bb L}}_{{2}}(.)$-sense.
The functions in \eqref{Lapl_Allgem_EF} only 
depend on $r\,\in\,(\frac{\cal A}{2},1+\frac{\cal A}{2})$. The
corresponding spherical harmonic 
functions of degree $\ell = 0$ are the $S^{\{0\}}=1$, cf. \eqref{SphSurf0}.
\end{theorem}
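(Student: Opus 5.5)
Since the first Dirichlet eigenvalue of $\boldsymbol L$ on the connected domain ${\Omega}_{(n),{\cal A}}$ is simple and its eigenfunction can be taken strictly positive, the plan is to show first that this eigenfunction is radial. Any rotation $Q\in O(n)$ maps ${\Omega}_{(n),{\cal A}}$ onto itself and commutes with $-\Delta_{\underline{x}}$, so $w\circ Q$ is again a first eigenfunction. By simplicity $w\circ Q=\pm w$, and positivity forces $w\circ Q=w$; hence $w({\underline{x}})=\tilde w(r)$. This is consistent with the variational characterization \eqref{eq:poincarepolar}, where the maximizer is radial. An alternative route is to expand in spherical harmonics $S^{\{\ell\}}$, cf. \eqref{SphSurf0}: the angular part contributes the nonnegative term $\ell(\ell+n-2)/r^2$ to the Rayleigh quotient, so the minimum is attained at $\ell=0$.

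Next, for radial $w$ the eigenvalue equation $-\Delta w=\kappa^2 w$ reduces to the ODE
\begin{equation*}
\tilde w''+\frac{n-1}{r}\tilde w'+\kappa^2\tilde w=0 .
\end{equation*}
The substitution $\tilde w(r)=r^{1-\frac n2}u(r)$ turns it into Bessel's equation
\begin{equation*}
u''+\frac{1}{r}u'+\Big(\kappa^2-\frac{(\frac n2-1)^2}{r^2}\Big)u=0
\end{equation*}
of order $\nu=\frac n2-1$. A fundamental system is $\{J_\nu(\kappa r),Y_\nu(\kappa r)\}$ for $n$ even, where $\nu\in\bb N_o$ and $J_{-\nu}$ is linearly dependent on $J_\nu$. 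For $n$ odd, $\nu\in\{\frac12+m\}$ and one may use $\{J_\nu,J_{-\nu}\}$, which are independent. The general radial solution is therefore $r^{1-\frac n2}\big(aJ_\nu(\kappa r)+bZ_\nu(\kappa r)\big)$ with $Z_\nu=Y_\nu$ or $Z_\nu=J_{-\nu}$.

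The Dirichlet conditions at $r={\cal A}/2$ and $r=1+{\cal A}/2$ give a homogeneous $2\times2$ linear system in $(a,b)$. It has a nontrivial solution if and only if its determinant vanishes, and this determinant is exactly the expression in \eqref{Lapl_Allgem_EW}. Eliminating $b$ through the inner boundary condition gives \eqref{Lapl_Allgem_EF}. Regularity and membership in $D(\boldsymbol L)$ are immediate, since $r$ stays away from $0$ on the annulus.

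It remains to show that the \emph{smallest} positive root $\kappa_{1,L}({\cal A})$ yields $\lambda_{1,L}$, and this is the main obstacle. Every radial eigenvalue is of the form $\kappa^2$ with $\kappa$ a root of the cross-product, and $\lambda_{1,L}$ is radial by the first step, so $\lambda_{1,L}$ is the smallest such $\kappa^2$. Each positive root does give a genuine eigenfunction, which requires that the coefficient vector be nonzero; this is automatic from the vanishing determinant.

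One subtlety is the division by $Z_\nu(\kappa{\cal A}/2)$ in \eqref{Lapl_Allgem_EF}. If that value vanishes, one instead uses the outer-boundary normalization or writes the eigenfunction as the cross combination
\begin{equation*}
Z_\nu(\kappa{\cal A}/2)J_\nu(\kappa r)-J_\nu(\kappa{\cal A}/2)Z_\nu(\kappa r).
\end{equation*}
By Sturm comparison, the first root corresponds to the eigenfunction without interior zeros, which confirms that it is the positive ground state.
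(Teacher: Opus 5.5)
Your proof is correct, and it reaches the result by a different route from the paper.

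\textbf{The paper's route.} The paper adapts Triebel's separation-of-variables proof for the ball (Subsection 6.4.4 of \cite{Triebel}). Eigenfunctions are sought in the form $f(r)\,S^{\{\ell\}}$, with $f$ a two-term combination of Bessel functions of order $\ell+\frac n2-1$; the second function is admissible because the origin is excluded. The Dirichlet conditions then give the cross-product equations. The paper justifies $\ell=0$ by the vanishing Beltrami eigenvalue $\ell(\ell+n-2)$ together with the assertion, made without proof, that the roots of \eqref{Lapl_Allgem_EW} are ordered like Bessel zeros.

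\textbf{Your route.} You get radiality of the ground state directly from simplicity, positivity and $O(n)$-invariance. These are standard facts that the statement already presupposes when it speaks of the first simple eigenvalue. After that, identifying $\lambda_{1,L}({\cal A})$ with $\kappa_{1,L}({\cal A})^2$ is a clean two-sided argument: every positive root gives an eigenvalue, and $\lambda_{1,L}$ is one of them. This needs neither completeness of the separated eigenfunctions nor any comparison of roots across different $\ell$. Your Rayleigh-quotient alternative, with the nonnegative term $\ell(\ell+n-2)/r^2$, is essentially a rigorous version of the paper's $\ell=0$ selection.

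\textbf{What each buys.} The paper's template is reusable. For the Stokes operator (Theorem \ref{thm_Stokes_n}) the first eigenvalue is not simple and has no sign-definite eigenfunction. The ground state also cannot be radial (Corollary \ref{Stokes_sph_harm_l=1}). So your symmetry argument is unavailable there, while the spherical-harmonic bookkeeping still works.

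\textbf{The denominator remark.} Your point about the denominator in \eqref{Lapl_Allgem_EF} is a genuine improvement. Since $\kappa_{1,L}({\cal A})\,{\cal A}/2$ varies continuously from $0$ to $\infty$, it meets zeros of $Y_{\frac n2-1}$ or $J_{-\frac n2+1}$ for some values of ${\cal A}$. For $n=3$ this already happens in \eqref{eiflaplacen=3} at ${\cal A}=1$, where $\cos(\pi/2)=0$. At such ${\cal A}$ the paper's formula is undefined, and only your cross-combination form of the eigenfunction makes sense.
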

%%%%%%%%%%%%%
\begin{remark} \label{thm_Lapl_sigma_n} It is easy to restate Theorem
  \ref{thm_Lapl_n} for $n>3$ (i.e. overall for $n>1$)
and ${\Omega}_{(n),\sigma}^{*}\subset {\bb R}^{n}$,
$\sigma\,\in\,(0,1)$ with the definition of $\sigma$ in \eqref{calA}
% \begin{equation*}
% {\sigma}\,=
% \frac{\displaystyle{\cal A}}{\displaystyle{ {\cal A} +2 }}\,\,
% \end{equation*} 
and the conversion formulas between the roots of the transcendental
equations \eqref{kappa_A_sigma}, namely,
\begin{align*}
\kappa_{1,L}({\sigma})=({1+{\cal A}/{2}}) \kappa_{1,L}({\cal
  A})\,.
\end{align*}
%Finally we rewrite the relations \eqref{Lapl_Allgem_EW} and \eqref{Lapl_Allgem_EF}.  
The details are given in Corollary \ref{col_sigma_Lapl_n}. 
\end{remark} 
%%%%%%%%%%%%%%
\noindent We quote the Theorem from Subsection 6.4.4 in \cite{Triebel} for the Laplace eigenfunctions
especially for the smallest eigenvalues and the first eigenfunctions
on ${\Omega}_{(n)}^{*}:=\{{\underline{x}} \in {\bb R}^{n}:\,
\|{\underline{x}}\|\,
<1\}$.
%%%%%%%%
\begin{note} \label{Trieb_Laplace_1}
Let $n>1$ and ${\Omega}_{(n)}^{*}\subset {\bb R}^{n}$ be the open unit ball. We denote the Laplace 
operator on %the unit ball
${\Omega}_{(n)}^{*}$ by ${\boldsymbol L}_{o}$.
% For all $n\,\in\,{\bb N},\,n>1$
The first simple eigenvalue
${\lambda}_{1,L}$ of %the Laplacian
${\boldsymbol L}_{o}$ on  ${\Omega}_{(n)}^{*}$ is the square of 
the smallest  positive solution
$\kappa_{1,L}$  of
\begin{align}\label{Lapl_einheitskugel_EW}
J_{\frac{n}{2}-1}(\kappa_{L}) =0\quad \quad \quad \quad \,\forall \,n\,\in\,{\bb N}:\,n>1\,.
\end{align}
The corresponding first eigenfunctions are
\begin{align}\label{Lapl_einheitskugel_EF}
{{w}}_{{\;\!}1,L}({\underline{x}})
\,=\,\frac{\tilde{c}_{1,L}}{r^{\frac{n}{2}-1}}
\cdot
J_{\frac{n}{2}-1}(\kappa_{1,L}\cdot r)\, ,
\end{align}
where, again, 
$\tilde{c}_{1,L}$ denote
scaling constants. % in the ${{\bb L}}_{{2}}(.)$-sense.
The functions in \eqref{Lapl_einheitskugel_EF}  only 
depend on $r\,\in\,(0,1)$ and are well-defined in $r=0$ as well. The corresponding spherical harmonic
functions of degree $\ell = 0$ are again $S^{\{0\}}=1$, cf. \eqref{SphSurf0}.
\end{note}
%%%%%%
\begin{proof} (of Thm.~\ref{thm_Lapl_n}) We use 
  \cite{Triebel} as quoted above in 
  Note \ref{Trieb_Laplace_1} and adapt the arguments of the proof in
  \cite{Triebel} for our case. 
For arbitrary ${\cal A}\,\in\,(0,\infty)$ and for $\ell = 0$  we replace the function
($J_{\ell+\frac{n}{2}-1}(\kappa_{L}(.)r)$  
by the linear combination $F(.)$ of the Bessel functions
$J_{\frac{n}{2}-1}$ and $J_{-\frac{n}{2}+1}$  
(resp. $Y_{\frac{n}{2}-1}$), where the second functions are finite for
${\cal A}>0$. The functions %$J_{-\frac{n}{2}+1}$
% (resp. $Y_{\frac{n}{2}-1}$)
$\{J_{\frac{n}{2}-1}, Y_{\frac{n}{2}-1}\}$ resp. $\{J_{-\frac{n}{2}+1},Y_{\frac{n}{2}-1}\}$
are linear independent solutions of the corresponding Bessel
differential equations. The
boundary conditions for $F(.)\,=\,F(r)$ on
$\partial{\Omega}_{(n),{\cal A}}$ are contained in
\eqref{Lapl_Allgem_EW}. 
In the second step we find $\kappa_{1,L}({\cal A})$ as the solutions of the
Bessel functions $J_{\frac{n}{2}-1}$. Now we are able to finish like
in step 3 
of Triebel's proof: The corresponding spherical harmonic functions of
degree $\ell = 0$ are  
the $S^{\{0\}}=1$. It ist worth to note, that the consecutive roots of
\eqref{Lapl_Allgem_EW} behave like the  
consecutive roots of the Bessel functions $J_{\frac{n}{2}-1}(.)$. This
property together with  
the eigenvalue $\ell (\ell+n-2)\,=\,0$ for $S^{\{0\}}=1$ at $\ell = 0$
(cf. Theorem \ref{thmsbeltr}) justify that in Theorem 
\ref{thm_Lapl_n} the smallest eigenvalue
${\lambda}_{1,L}({\cal A})$ of the 
Laplace operator on ${\Omega}_{(n),{\cal A}}$ is the square of the smallest positive solutions
$\kappa_{1,L}({\cal A})$ of \eqref{Lapl_Allgem_EW}. 
\end{proof}
%%%%%%%
\subsection{First Eigenvalues and Eigenfunctions for the Stokes
  Operator \label{sec_sto_n}}
%%%%%%%%%%%%%%%%%
\noindent Now for arbitrary ${\cal A}\,\in\,(0,\infty)$ and $n \geq 4$
we construct one eigenfunction for the smallest 
eigenvalue ${\lambda}_{1,S}({\cal A})$ 
of the Stokes operator. This is the easiest 
way for the calculation of the corresponding Poincaré
constant $c_{p,S}({\cal A})$. %So we are going to  benefit from a
                              %property of the Laplacian in
                              %application on a field
                              %${\underline{u}}_{\mathfrak{c}}$
                              %written in Cartesian coordinates. 
\begin{remark}  \label{Lapl_Trick} Let ${\Omega}\in\,{\bb R}^n$,
  $n>1$ be any of the domains 
${\Omega}_{(n),{\cal A}},\, {\Omega}^{*}_{(n), \sigma},$ and ${\Omega}_{(n)}^{*}$.
%The Laplacian (cf. Remark \ref{Lapldimn}) $\Delta$ act as a scalar in
%the application  on
%${\underline{u}}_{\mathfrak{c}}\,\in\,{\underline{C}}^2({\Omega})$,
%cf. Notation \label{N3}. This is also true for the Laplacian
%$\Delta\,=\,\Delta_{r,\vartheta_{1},\dots\,\vartheta_{n-2},\varphi}$
%in spherical polar coordinates.
As shown for $n=3$ in \cite{RumHab} %it holds that
\begin{equation}\label{Lapl_u} 
\Delta {\underline{u}}\,=\,\Delta (
\sum_{j=1}^{n} u_{j,{\mathfrak{c}}}{\underline{\mathfrak{e}}}_{j})
		\,=\,
\sum_{j=1}^{n} (\Delta \,u_{j,{\mathfrak{c}}}){\underline{\mathfrak{e}}}_{j}\,=\,
		\left[
		\begin{array}{l}
\Delta_{r,\vartheta_{1},\dots\,\vartheta_{n-2},\varphi}( u_{1,{\mathfrak{c}}})\\
\quad\quad {~}\vdots\\
\Delta_{r,\vartheta_{1},\dots\,\vartheta_{n-2},\varphi}( u_{n,{\mathfrak{c}}})
		\end{array}
		\right] \,.
\end{equation}
\end{remark} 
\noindent 
We cite a Lemma from \cite{RumHab} (resp. \cite{RumTh2024}).
\begin{lem} \label{LE1}
Let $\,{\Omega}_{(n)}^{*}$, $n\,\in\,{\bb N}, n>1$, be the open unit
ball in ${\bb R}^{n}$. %,${\underline{x}}\,\in\,{{\Omega}_{(n)}^{*}}$ -
                     %$r$ is the radius in a spherical coordinate system.
Then for all $r\,\in \, (0,1)$ there exists no solenoidal vector function 
${\underline{g}} \in (C^{1}({\Omega}_{(n)}^{*}))^{n}$, which is only depending of 
the variable $r$, with ${\underline{g}}(r)\,\neq\,{\underline{c}}$. Here ${\underline{c}}$ denotes any 
constant vector.
\end{lem}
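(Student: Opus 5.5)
The plan is to argue by contradiction using the divergence theorem on concentric spheres. Suppose ${\underline{g}}\in(C^{1}({\Omega}_{(n)}^{*}))^{n}$ is solenoidal and depends only on $r=\|{\underline{x}}\|$. Write it in Cartesian components, ${\underline{g}}({\underline{x}})=\sum_{j=1}^{n} g_{j}(r){\underline{\mathfrak{e}}}_{j}$, with each $g_{j}$ a $C^{1}$ function of $r\in(0,1)$; this is the meaning of ``only depending on $r$'' in the spirit of Remark \ref{Lapl_Trick}. The goal is to show that every $g_{j}$ is constant.

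The first step is to compute the divergence by the chain rule. Since $\partial r/\partial x_{k}=x_{k}/r$ for $r>0$,
\[
\mathrm{div}\,{\underline{g}}\,=\,\sum_{k=1}^{n}\frac{\partial g_{k}(r)}{\partial x_{k}}
\,=\,\sum_{k=1}^{n} g_{k}'(r)\,\frac{x_{k}}{r}\,=\,{\underline{g}}'(r)^{T}\,{\underline{\mathfrak{e}}}_{r}\,.
\]
Solenoidality therefore says that, for every $r\in(0,1)$ and every unit vector ${\underline{\xi}}={\underline{x}}/r\in\omega_{(n)}$, the fixed vector ${\underline{g}}'(r)\in{\bb R}^{n}$ is orthogonal to ${\underline{\xi}}$. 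This holds for all ${\underline{\xi}}$ on the sphere, and these directions span ${\bb R}^{n}$. Hence ${\underline{g}}'(r)={\underline{0}}$ for all $r\in(0,1)$.

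The final step uses connectedness. Because $(0,1)$ is an interval, each $g_{j}$ is constant there, so ${\underline{g}}\equiv{\underline{c}}$ on ${\Omega}_{(n)}^{*}\setminus\{{\underline{0}}\}$, and by continuity also at ${\underline{0}}$. This contradicts ${\underline{g}}\neq{\underline{c}}$.

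The main obstacle is interpretive rather than technical. One must fix that ``depending only on $r$'' refers to the Cartesian components. If the components were taken in the spherical frame, for instance ${\underline{g}}=h(r){\underline{\mathfrak{e}}}_{r}$, then solenoidal fields such as ${\underline{x}}/r^{n}$ exist, but they are singular at the origin and not $C^{1}$ on the ball. If the paper intends that reading, the argument changes as follows. Write ${\underline{g}}=h(r){\underline{\mathfrak{e}}}_{r}+\dots$. The radial part must satisfy $(r^{n-1}h)'=0$, so $h=c\,r^{1-n}$. Boundedness near ${\underline{0}}$, guaranteed by the $C^{1}$ assumption, then forces $c=0$. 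Any purely tangential part with $r$-only coefficients in the spherical frame is not even continuous at ${\underline{0}}$ unless it is zero. I would present the Cartesian argument as the proof and add this remark to cover the alternative reading.
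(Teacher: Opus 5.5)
The paper does not prove Lemma \ref{LE1}. It only quotes it from \cite{RumHab} and \cite{RumTh2024}, so your argument is a self-contained substitute rather than an alternative route to a proof given in the paper.

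Your main (Cartesian) argument is correct and complete:
\begin{itemize}
\item The profiles $t\mapsto g_j(t\,\underline{\mathfrak{e}}_1)$ are $C^1$ on $(0,1)$.
\item The chain rule gives $\mathrm{div}\,\underline{g}(\underline{x})=\underline{g}'(r)^{T}\underline{x}/r$.
\item Testing with $\underline{x}=r\,\underline{\mathfrak{e}}_j$ forces $g_j'(r)=0$ for every $j$ and every $r\in(0,1)$.
\end{itemize}
The Cartesian reading is also the one the paper actually uses. Corollary \ref{Stokes_sph_harm_l=1} refers to eigenfunctions ``written in Cartesian coordinates''. In Example \ref{Idee_Stokes_n=2}, the field $\varrho(r)\,\underline{\mathfrak{e}}_\varphi$ is first converted to Cartesian form before the lemma's consequence is invoked.

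Your route has a bonus. The origin enters only in the final continuity step, so the same computation proves Note \ref{LE1_All} on $\Omega^{*}_{(n),\sigma}$ and $\Omega_{(n),\mathcal{A}}$ essentially verbatim. The paper only asserts that this is ``easy to see''.

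The side remark on the spherical-frame reading is wrong, however. It should be removed rather than offered as a fallback, because under that reading the statement is false.
\begin{itemize}
\item For $n=2$, take $\underline{g}=r\,\underline{\mathfrak{e}}_\varphi=(-x_2,x_1)^{T}$. It is smooth on $\Omega^{*}_{(2)}$, divergence free and non-constant. Its polar components, $0$ and $r$, depend only on $r$. It has exactly the structure of the paper's first two-dimensional Stokes eigenfunction.
\item So your claim that a tangential part with $r$-only coefficients cannot be continuous at $\underline{0}$ unless it vanishes is false. Continuity at the origin only forces the coefficient to tend to $0$ as $r\to 0$.
\item On the annuli of Note \ref{LE1_All}, the field $\underline{x}/\|\underline{x}\|^{n}$ (which you mention yourself) is smooth and solenoidal, so the spherical reading would make that note false as well.
\end{itemize}
These counterexamples are the better way to justify your interpretive choice: only the Cartesian reading makes the lemma and Note \ref{LE1_All} true.
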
 
\begin{note} \label{LE1_All}
It is easy to see, that
Lemma \ref{LE1} is also true  for all ${\Omega}^{*}_{(n),\sigma}$ and ${\Omega}^{}_{(n),\cal A}$
in ${\bb R}^{n}$.
\end{note}
%%%%%%%%%%%%%
\begin{corollary} \label{Stokes_sph_harm_l=1}
  Any eigenfunction ${\underline{w}}_{\mathfrak{c}}$ of the 
Stokes operator on ${\Omega}^{*}_{(n)}$,  
${\Omega}^{*}_{(n),\sigma}$ or 
${\Omega}^{}_{(n),\cal A}$ (written in Cartesian coordinates) has to
possess a function of the surface harmonic function of 
the degree $\ell  \geq 1$ as a factor in at least one of its components. 
\end{corollary}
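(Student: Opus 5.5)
We argue by contradiction, using Lemma \ref{LE1} together with Note \ref{LE1_All}. Let ${\underline{w}}_{\mathfrak{c}}$ be an eigenfunction of ${\boldsymbol S}$ on one of the domains ${\Omega}^{*}_{(n)}$, ${\Omega}^{*}_{(n),\sigma}$ or ${\Omega}_{(n),\cal A}$. By Corollary \ref{STOeiFU} and the regularity results quoted there, ${\underline{w}}_{\mathfrak{c}}$ lies in $D({\boldsymbol S})$ and is smooth in the interior, hence in $(C^{1}(\Omega))^{n}$. It is solenoidal, has vanishing Dirichlet traces and is not identically zero. Its components $w_{j,\mathfrak{c}}$ lie in ${\bb L}_{2}(\Omega)$, and we expand each of them in spherical surface harmonics:
\[
w_{j,\mathfrak{c}}(r,\vartheta_1,\dots,\vartheta_{n-2},\varphi)=\sum_{\ell\ge 0}\sum_{m} f_{j,\ell,m}(r)\,S^{\{\ell\}}_{m}(\vartheta_1,\dots,\vartheta_{n-2},\varphi).
\]
This is the expansion from Theorem \ref{thmsbeltr}, using completeness of the surface harmonics on $\omega_{(n)}$. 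Suppose the claim fails, so that no component contains a surface harmonic of degree $\ell\ge1$ as a factor, i.e.\ all coefficients with $\ell\ge 1$ vanish. Since $S^{\{0\}}=1$, every component then depends on $r$ only, and ${\underline{w}}_{\mathfrak{c}}={\underline{g}}(r)$ is a solenoidal $C^1$ vector field depending on $r$ alone.

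Lemma \ref{LE1} (on the ball) and Note \ref{LE1_All} (on the annuli) then force ${\underline{g}}\equiv{\underline{c}}$ to be a constant vector. The Dirichlet condition on $\partial\Omega$ gives ${\underline{c}}={\underline{0}}$. This contradicts ${\underline{w}}_{\mathfrak{c}}\neq{\underline{0}}$, so at least one component must contain a degree $\ell\ge1$ harmonic factor.

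To make the reduction rigorous one should note that the degree-zero part of each component is its spherical mean
\[
\bar w_j(r)=|\omega_{(n)}|^{-1}\int_{\omega_{(n)}} w_{j,\mathfrak{c}}(r\,\cdot)\,d\omega_{(n)}.
\]
The hypothesis that every component lacks $\ell\ge1$ contributions then means $w_{j,\mathfrak{c}}(r\,\cdot)=\bar w_j(r)$ for all $j$. The function $\bar w_j$ inherits $C^1$ regularity from $w_{j,\mathfrak{c}}$, which is what Lemma \ref{LE1} requires.

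The main obstacle is the precise meaning of ``possess a surface harmonic of degree $\ell\ge1$ as a factor.'' I read it as: the $L_2(\omega_{(n)})$-projection of some component onto the harmonics of degree $\ell\ge1$ is nonzero for some $r$. Under that reading the argument above is complete. If a stronger factorization $f(r)S^{\{\ell\}}$ is intended, one additionally uses Remark \ref{Lapl_Trick}: the vector Laplacian acts componentwise in Cartesian coordinates and commutes with the decomposition into degrees. This is where care is needed, because the pressure gradient in ${\boldsymbol S}=-\Upsilon\Delta$ couples components. It does, however, preserve the property of having nonzero higher-degree content, and that property is all the statement needs.
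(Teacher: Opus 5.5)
Your argument is correct and is essentially the paper's. The corollary is stated there without a separate proof, as a direct consequence of Lemma~\ref{LE1} and Note~\ref{LE1_All}: a field whose Cartesian components carry only the degree-$0$ harmonic $S^{\{0\}}=1$ depends on $r$ alone, is therefore constant, and vanishes by the Dirichlet condition. Your spherical-mean formulation of the ``degree $\ell\ge1$'' condition is a useful sharpening, but the closing remarks about a stronger factorization reading and the pressure gradient are unnecessary and not justified as written, so drop them.
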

%%%%%%%%
\noindent In what follows we change our view on the outcomes in the
Notes \ref{Result_n=2} and \ref{Result_n=3} 
to get another look at the first Stokes eigenfunctions on
${\Omega}^{*}_{(n)}$,  
${\Omega}^{}_{(n), \cal A} $ and ${\Omega}^{*}_{(n), \sigma}$.  
We formulate our completely new result in the subsequent Theorem \ref{thm_Stokes_n} for 
${\Omega}^{}_{(n), \cal A}$. The structural design of the
selected first Stokes eigenfunctions in  
Theorem \ref{thm_Stokes_n} is copied from the toroidal fields in
${\Omega}^{*}_{(3)}$ (cf. \cite{RumTh2024}). 
Toroidal fields are vector fields with vanishing divergence in the setting 
${\underline{w}}_{{\;\!}\mathfrak{s}}\,=\, {\underline{0}} + w_{\varphi,{\mathfrak{s}}}{\underline{\mathfrak{e}}}_{\varphi}$ 
or ${\underline{w}}_{{\;\!}\mathfrak{s}}\,=\,0\cdot
{\underline{\mathfrak{e}}}_{r}+
 w_{\vartheta_{1},{\mathfrak{s}}}{\underline{\mathfrak{e}}}_{\vartheta_{1}}
+
 w_{\varphi,{\mathfrak{s}}}{\underline{\mathfrak{e}}}_{\varphi}$.
The translation of our result for the Stokes eigenfunctions on ${\Omega}^{*}_{(n)}$ and
${\Omega}^{}_{(n),\sigma}$ is obvious and formulated in 
two propositions afterwards. 
In view of Theorem \ref{thm_Stokes_n}
one can regard the first Stokes eigenfunctions in Note
\ref{Result_n=2} as a special case. 
The main steps in the proof of the Theorem are the shape of the first
Stokes eigenfunctions and then using Remark \ref{Lapl_Trick} and Corollary
\ref{Stokes_sph_harm_l=1}.
%%%%%%%%%
\begin{theorem} \label{thm_Stokes_n} Let $n>3$ {{(resp. $n>1$)}} and ${\Omega}_{{\cal A}}={\Omega}_{(n),{\cal A}}\subset {\bb R}^{n}$.
For any ${\cal A}\,\in\,(0,\infty)$ the first eigenvalue
${\lambda}_{1,S}({\cal A})$ of the Stokes operator on ${\Omega}_{(n),{\cal A}}$ is %for all $n\,\in\,{\bb N},\,n>3$ 
the square of the smallest positive solutions
$\kappa_{1,S}({\cal A})$
of the transcendental equations
\begin{align}\label{Stokes_Allgem_EW}
0\,=\,\left\{\begin{array}{ll}
J_{\frac{n}{2}}(\kappa_{S}({\cal A})(1+\frac{\cal A}{2}))Y_{\frac{n}{2}}(\kappa_{S}({\cal A}) \frac{\cal A}{2})
\,-\,J_{\frac{n}{2}}(\kappa_{S}({\cal A}) \frac{\cal A}{2})Y_{\frac{n}{2}}(\kappa_{S}({\cal A})(1+\frac{\cal A}{2})) 
& \forall \,
n\mbox{ even} \,\\[2mm] 
%{~} & {~} \\
J_{\frac{n}{2}}(\kappa_{S}({\cal A})(1+\frac{\cal A}{2}))J_{-\frac{n}{2}}(\kappa_{S}({\cal A}) \frac{\cal A}{2})
\,-\,J_{\frac{n}{2}}(\kappa_{S}({\cal A}) \frac{\cal A}{2})J_{-\frac{n}{2}}(\kappa_{S}({\cal A})(1+\frac{\cal A}{2})) & \forall \,
n\mbox{ odd}\,.
\end{array}
\right.\,.
\end{align}
The equations \eqref{Stokes_Allgem_EW} are derived from the homogeneous Dirichlet boundary conditions 
on $\partial{\Omega}_{(n),{\cal A}}$. 
One of the corresponding first eigenfunctions are the fields
\begin{align}\label{Stokes_Allgem_EF}
{\underline{w}}_{{\;\!}\varphi,S,{\cal A}}({\underline{x}})\,:=\,{\tilde{c}_{1,S,{\cal A}}}
\left\{
\begin{array}{ll}
\, \varrho(r) \sin{\vartheta_{1}}\sin{\vartheta_{2}}\cdots \sin{\vartheta_{n-2}}\cdot 
{\underline{\mathfrak{e}}}_{{\;\!}\varphi}\,& \forall \,n\,\in\,{\bb N}\,;\,n>2  \\
{~} & {~} \\ 
\, \varrho(r) \cdot 
{\underline{\mathfrak{e}}}_{{\;\!}\varphi}\,& 
\text{for }\,
\,n=2 \, 
\end{array}\right.
\end{align}
(cf. Corollary \ref{Stokes_sph_harm_l=1} and the corresponding
spherical harmonics of the degree  
$\ell = 1$ given in Eq. \eqref{SphSurf1}).
Again, $\tilde{c}_{1,S,{\cal A}}$ denote the corresponding  
scaling constants.
The functions $\varrho(r)$ only depend on $r\,\in\,(\frac{\cal
  A}{2},1+\frac{\cal A}{2})$. They are% specified as: 
\begin{align}\label{Stokes_Allgem_Rho}
\varrho(r)
\,:=\,\frac{1}{r^{\frac{n}{2}-1}}
\left\{
\begin{array}{ll}
\big(
J_{\frac{n}{2}}(\kappa_{1,S}({\cal A})r)\,-\,
\frac{J_{{\frac{n}{2}}}({\textstyle{\frac{ \kappa_{1,S}({\cal A}) {\cal A}}{2}}})}
{Y_{{\frac{n}{2}}}({\textstyle{\frac{ \kappa_{1,S}({\cal A}) {\cal A}}{2}}})}
Y_{\frac{n}{2}}( \kappa_{1,S}({\cal A}) r)
\big)&\forall \,
n\mbox{ even}\,,\\[2mm]
%{~} & {~} \,\\
\big(J_{\frac{n}{2}}(\kappa_{1,S}({\cal A})r)\,-\,
\frac{J_{{\frac{n}{2}}}({\textstyle{\frac{ \kappa_{1,S}({\cal A}) {\cal A}}{2}}})}
{J_{-{\frac{n}{2}}}({\textstyle{\frac{ \kappa_{1,S}({\cal A}) {\cal A}}{2}}})}
J_{-\frac{n}{2}}( \kappa_{1,S}({\cal A}) r
)
\big)&\forall \,
n\mbox{ odd}\,. 
\end{array}
\right.
\end{align}
\end{theorem}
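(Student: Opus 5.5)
The plan is to verify the proposed field directly, deriving \eqref{Stokes_Allgem_EW} from its boundary conditions, and then to prove minimality separately. The starting point is to rewrite \eqref{Stokes_Allgem_EF} in Cartesian coordinates. With the standard polar coordinates one has $x_1=r\sin\vartheta_1\cdots\sin\vartheta_{n-2}\cos\varphi$ and $x_2=r\sin\vartheta_1\cdots\sin\vartheta_{n-2}\sin\varphi$. Hence
\[
\sin\vartheta_1\cdots\sin\vartheta_{n-2}\,{\underline{\mathfrak{e}}}_{\varphi}=\tfrac{1}{r}\,(-x_2,x_1,0,\dots,0)^T ,
\]
and ${\underline{w}}_{\varphi,S,{\cal A}}=\tilde c_{1,S,{\cal A}}\, f(r)\,(-x_2,x_1,0,\dots,0)^T$ with $f=\varrho/r$. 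This is a rigid rotation in the $(x_1,x_2)$-plane with a radial profile. Its divergence is $f'(r)\,\frac{1}{r}(-x_1x_2+x_2x_1)=0$ identically, so the field is solenoidal, and the pressure gradient in the Stokes problem can be taken to be zero. Its nonvanishing Cartesian components are of the form $f(r)\,x_j$, that is, radial functions times spherical harmonics of degree $\ell=1$ from \eqref{SphSurf1}, in accordance with Corollary \ref{Stokes_sph_harm_l=1}.

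By Remark \ref{Lapl_Trick} the vector Laplacian acts componentwise. For a product $f(r)x_j$ a direct computation gives
\[
\Delta\big(f(r)x_j\big)=\Big(f''+\tfrac{n+1}{r}f'\Big)x_j .
\]
So the eigenvalue equation $-\Delta{\underline{w}}=\kappa^2{\underline{w}}$ reduces to the radial Laplace equation in dimension $n+2$, namely $f''+\frac{n+1}{r}f'+\kappa^2f=0$. By the same Bessel reduction used in the proof of Theorem \ref{thm_Lapl_n}, now with $n$ replaced by $n+2$, its solutions are $f=r^{-n/2}Z_{n/2}(\kappa r)$. Here $Z$ is a combination of $J_{n/2},Y_{n/2}$ for $n$ even and of $J_{n/2},J_{-n/2}$ for $n$ odd. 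Consequently $\varrho=rf$ is exactly of the form \eqref{Stokes_Allgem_Rho}. Imposing $\varrho=0$ at $r={\cal A}/2$ fixes the ratio of the two Bessel functions. Imposing $\varrho=0$ at $r=1+{\cal A}/2$ then gives the vanishing cross product \eqref{Stokes_Allgem_EW}. Since the field is smooth up to the boundary of ${\Omega}_{(n),{\cal A}}$ and vanishes there, it lies in $D({\boldsymbol S})$ and is an eigenfunction with eigenvalue $\kappa_{1,S}({\cal A})^2$. The case $n=2$ and the cases $n=3$ from Notes \ref{Result_n=2} and \ref{Result_n=3} come out as special cases. This observation already yields the announced shift $n\mapsto n+2$ between the Stokes and Laplace problems.

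The remaining step, which I expect to be the main obstacle, is minimality: I must show $\lambda_{1,S}({\cal A})\ge \kappa_{1,S}({\cal A})^2$. My plan is to expand an arbitrary ${\underline{u}}\in{\underline{\bb V}}({\Omega}_{(n),{\cal A}})$ componentwise in spherical harmonics. Then $\|{\underline{u}}\|_{D,S}^2=\sum_j\|\nabla u_j\|_2^2$ splits into sums over degrees $\ell$. On each degree-$\ell$ piece the scalar Dirichlet Rayleigh quotient is bounded below by the square of the first zero of the cross product of Bessel functions of order $\ell+\frac n2-1$. These zeros increase with the order, by the same interlacing behaviour invoked in the proof of Theorem \ref{thm_Lapl_n}. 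Hence all pieces with $\ell\ge1$ have quotient at least $\kappa_{1,S}({\cal A})^2$.

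The difficulty is the $\ell=0$ parts of the components: their Rayleigh quotient is only bounded below by $\pi^2$-type values $\kappa_{1,L}^2$, which are smaller. I would try to exclude them using solenoidality in the spirit of Lemma \ref{LE1} and Note \ref{LE1_All}. The first attempt is to show that the vector of $\ell=0$ parts, ${\underline{m}}(r)=|\omega_{(n)}|^{-1}\int_{\omega_{(n)}}{\underline{u}}(r\xi)\,d\omega$, is itself divergence free. If so, Lemma \ref{LE1} combined with the boundary values forces ${\underline{m}}\equiv\underline 0$.

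If that fails, I would fall back on a toroidal/poloidal decomposition. The toroidal part is handled as above. For the poloidal part I would try to show, following the three-dimensional argument of \cite{RumTh2024}, that its first eigenvalue strictly exceeds the toroidal one; this is the step where I am least certain the argument closes in general dimension $n$.
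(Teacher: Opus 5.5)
Your verification of the candidate field is correct and follows the paper's own route. The paper also takes the toroidal field, gets $\mbox{div}\,\underline{w}=0$ from Remark \ref{Div_phi}, writes the Cartesian components as $\varrho(r)$ times the degree-one harmonics $S^{\{1\}}_1,S^{\{1\}}_2$, and appeals to Step~3 of Triebel's proof componentwise. Your identity $\Delta(f(r)x_j)=(f''+\frac{n+1}{r}f')x_j$ does that step in one line and shows the shift $n\mapsto n+2$ directly.

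The gap is minimality, and your first route to it fails: the spherical means of the Cartesian components of a field in $\underline{\cal V}({\Omega}_{(n),{\cal A}})$ need not vanish. Take $g\in C_o^\infty(\frac{\cal A}{2},1+\frac{\cal A}{2})$ with $g\not\equiv 0$, put $\Psi_{ij}:=g(r)(x_i\delta_{j1}-x_j\delta_{i1})$ and $u_i:=\sum_j\partial_j\Psi_{ij}$. Then
\[
\underline{u}=\frac{g'(r)\,x_1}{r}\,\underline{x}-\big(rg'(r)+(n-1)g(r)\big)\,\underline{\mathfrak{e}}_1,
\qquad
\underline{m}(r)=-\frac{n-1}{n}\,r^{1-n}\big(r^{n}g(r)\big)'\,\underline{\mathfrak{e}}_1 .
\]
This $\underline{u}$ is smooth, compactly supported and solenoidal, since $\Psi$ is antisymmetric. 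Yet $\underline{m}\not\equiv\underline{0}$, because otherwise $r^n g$ would be constant. So $\underline{m}$ is not solenoidal, Lemma \ref{LE1} tells you nothing, and the degree-zero part of $u_1$ is genuinely present. On such a part the best componentwise bound is $\lambda_{1,L,(n)}({\cal A})$. This lies strictly below $\kappa_{1,S}({\cal A})^2$, which is the first Dirichlet eigenvalue of the degree-one radial operator with its extra term $(n-1)/r^2$. Any estimate that treats the degree-zero parts separately is therefore too weak; you must use the coupling imposed by $\mbox{div}\,\underline{u}=0$.

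Your fallback points to the right place but leaves the decisive inequality unproved. The toroidal/poloidal splitting itself is harmless. The two sectors are orthogonal in $\underline{\bb L}_2$, and, since they carry inequivalent $O(n)$-representations, also in the Dirichlet form. Your componentwise bound already settles two cases:
\begin{itemize}
\item the toroidal sector, because a tangential, surface-divergence-free field has zero mean over every sphere;
\item the poloidal fields of degree $\ell\ge 2$, because their Cartesian components have degrees $\ell\pm1\ge1$.
\end{itemize}
What remains is exactly the poloidal sector generated by degree-one harmonics, which is where the example above lives. Its eigenfunctions carry a nonzero harmonic pressure, lying in $\mbox{span}\{x_1,\,x_1r^{-n}\}$ after a rotation. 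Hence their components are not Helmholtz solutions, and their eigenvalue condition is a $4\times4$ determinant rather than \eqref{Stokes_Allgem_EW}. On the unit ball it reduces to $J_{\frac n2+1}(\kappa)=0$. You still have to prove that on the annulus its smallest positive root exceeds $\kappa_{1,S}({\cal A})$.

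The paper does not supply this step either. It concludes minimality from Corollary \ref{Stokes_sph_harm_l=1}: some component carries a harmonic of degree $\ell\ge1$, and $\ell=1$ is the lowest such degree. That reasoning is compatible with the example above and does not address this sector.
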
 
\noindent Before we prove the Theorem we illustrate our %show the application of our
method for the cases $n=2$ and $n=3$.
%%%%%%%%%%%%%%%%%%%%%%%%%%%%%%%
\begin{example} \label{Idee_Stokes_n=2} For arbitrary ${\cal
    A}\,\in\,(0,\infty)$ we consider the first Stokes eigenfunction on 
${\Omega}_{(2),{\cal A}}\subset {\bb R}^{2}$ 
(cf. Eq. \eqref{eifstokes2} and \cite{RuRuTh2016}). % to demonstrate the idea for the proof of Theorem \ref{thm_Stokes_n}. 
We write using $\varrho(r)$ as definied in \eqref{Stokes_Allgem_Rho} %as an abbreviation:
\begin{align}
{\underline{w}}_{\mathfrak{s}}({\underline{x}})\,:=\,
{\underline{w}}_{{\;\!}1,S,{\cal A}}({\underline{x}}) &\,=\,
\tilde{c}_{1,S,{\cal A}}
\, \varrho(r) \cdot 
{\underline{\mathfrak{e}}}_{{\;\!}\varphi}\,=\,\tilde{c}
\, \varrho(r) \cdot 
{\underline{\mathfrak{e}}}_{{\;\!}\varphi}\,.
\nonumber
\end{align}
The vector ${\underline{w}}_{\mathfrak{s}}({\underline{x}})$ is
divergence free since it is a Stokes eigenfunction.  
This agrees with the use of \eqref{DIV=2} in Remark \ref{Div_phi} 
since it also shows
$\text{div}{\;\!}{\underline{w}}_{\mathfrak{s}}\,=\,0$. 
The homogeneous Dirichlet boundary conditions 
on $\partial{\Omega}_{(2),{\cal A}}$ are fulfilled due to \eqref{Stokes_Allgem_EW}. 		
For applying Corollary \ref{Stokes_sph_harm_l=1} we have to 
transform ${\underline{w}}_{\mathfrak{s}}$ 
to Cartesian coordinates.
This %e transformation \eqref{R1_T=2} of
% ${\underline{w}}_{\mathfrak{s}}$ in the Cartesian coordinates
yields
\begin{equation} 
{\underline{w}}_{\mathfrak{c}}\,=\,
{\underline{\underline{T}}}_{{\mathfrak{c}},{\mathfrak{s}}}{\underline{w}}_{\mathfrak{s}}\,
=\,\tilde{c}
\, 
\left[
		\begin{array}{l}
- \varrho(r)\sin \varphi\\
\,\,\varrho(r)\cos \varphi
		\end{array}
		\right] \,,
\end{equation}
where the columns are coordinates.  The functions $\sin \varphi$ and  $\cos \varphi$
are surface harmonic function of
degree $\ell  = 1$. With Remark \ref{Lapl_Trick}  we arrive at
\begin{equation} \label{EWS_2}
-\,\Delta
{\underline{w}}_{\mathfrak{c}}\,=\,
- \tilde{c}\cdot
\left[
		\begin{array}{l}
- \Delta (\varrho(r)\sin \varphi)\\
\,\,\,\,\Delta(\varrho(r)\cos \varphi)
		\end{array}
		\right] \,=\,\tilde{c}\cdot (\kappa_{1,S}({\cal A}))^2
\left[
		\begin{array}{l}
- \varrho(r)\sin \varphi\\
\,\,\varrho(r)\cos \varphi
		\end{array}
		\right]\,=\,(\kappa_{1,S}({\cal A}))^2{\underline{w}}_{\mathfrak{c}}\,.
\end{equation}
The Bessel functions $J_{1}$ and $Y_{1}$ in \eqref{Stokes_Allgem_Rho}
for $n=2$ are corresponding to the surface harmonic
functions of 
the degree $\ell  = 1$. The calculations have to be carried out along the
lines of the proof of the Theorem in 
Subsection 6.4.4 in \cite{Triebel}.
{{For $n=2$ in Step 3 of this proof %for the proof of the Theorem in
    % Subsection 6.4.4
    for the first component
in \eqref{EWS_2} we obtain 
\begin{align} \label{Rechnen_n_2}
-\,\Delta
{{w}}_{1,\mathfrak{c}}\,=\,\tilde{c}\cdot
\Delta\,(\varrho(r)\sin \varphi)\,=\,\tilde{c}\cdot\,\left(
\frac{1}{r}
\frac{\text{d}}{\text{d} r}({r}\frac{\text{d} {\varrho(r)}}{\text{d}  r})\sin \varphi\,+\,
\frac{{\varrho(r)}}{r^2}
\frac{\text{d}^{{\;\!}2}}{{\text{d}} \varphi^2} (\sin \varphi) \right)\,=\,\nonumber\\
=\,\tilde{c}\sin \varphi\,\left(\frac{\text{d}^{{\;\!}2}}{{\text{d}} r} + \,\frac{1}{r}
\frac{\text{d}}{\text{d} r}\,-\,\frac{1}{r^2}\right){\varrho(r)}\,=\,-(\kappa_{1,S}({\cal A}))^2 \tilde{c}\cdot (\varrho(r)\sin \varphi)\,\,,
\end{align}
where in \eqref{Rechnen_n_2} we see the Bessel differential equation
with the solutions $J_{1}$ and $Y_{1}$.  
}}
To sum up, we have shown also directly, that ${\underline{w}}_{{\;\!}1,S,{\cal A}}({\underline{x}})$
is an eigenfunction of the Stokes operator on ${\Omega}_{(2),{\cal A}}$ and that $\lambda_{1,S}({\cal A})\,=\,
(\kappa_{1,S}({\cal A}))^2$ is the corresponding first eigenvalue.
\end{example}
%%%%%%%%%%%%%
\begin{example} \label{Idee_Stokes_n=3} In Note \ref{Result_n=3} we selected 
${\underline{\mathfrak{w}}}_{{\;\!}0}\,=\,\sin (\vartheta_1) \,
{\underline{\mathfrak{e}}}_{{\;\!}\varphi}$. Now we
choose exactly one of the first Stokes eigenfunctions on
${\Omega}_{{\cal A}}={\Omega}_{(3),{\cal A}}\subset {\bb R}^{3}$.
For arbitrary ${\cal A}\,\in\,(0,\infty)$ (cf. Eq. \eqref{eifstokesn=3}) to demonstrate Theorem \ref{thm_Stokes_n}. 
Using \eqref{Stokes_Allgem_Rho} we write
\begin{align}
{\underline{w}}_{\mathfrak{s}}({\underline{x}})\,:=\,
{\underline{w}}_{{\;\!}1,S,{\cal A}}({\underline{x}})& \,=\,\tilde{c}_{1,S,{\cal A}}
\, \varrho(r) \cdot {\underline{\mathfrak{w}}}_{{\;\!}0}\,=\,
\tilde{c}
\, \varrho(r) \cdot \sin (\vartheta_1) 
{\underline{\mathfrak{e}}}_{{\;\!}\varphi}\,.
\nonumber
\end{align}
One easily calculates that 
% ${\underline{w}}_{\mathfrak{s}}({\underline{x}})$ we get
$\text{div}{\;\!}{\underline{w}}_{\mathfrak{s}}\,=\,0$.
(cf. Remark \ref{Div_phi} esp. Eq. \eqref{DIV=3}).
The homogeneous Dirichlet boundary conditions 
on $\partial{\Omega}_{(3),{\cal A}}$ are fulfilled by \eqref{Stokes_Allgem_EW}. 		
For applying Corollary \ref{Stokes_sph_harm_l=1} we transform
${\underline{w}}_{\mathfrak{s}}$ 
into Cartesian coordinates.
This yields
\begin{equation} 
{\underline{w}}_{\mathfrak{c}}\,=\,
{\underline{\underline{T}}}_{{\mathfrak{c}},{\mathfrak{s}}}{\underline{w}}_{\mathfrak{s}}\,
=\,\tilde{c}\varrho(r) \sin \vartheta_1 \,
\, 
\left[
		\begin{array}{l}
- \sin \varphi\\
\,\, \,\cos \varphi \\
\quad 0
		\end{array}
		\right] \,,
\end{equation}
where, again, the columns are coordinates. We indicate that the
functions $S^{\{1\}}_2\,=\,\sin \vartheta_1 \sin \varphi$ and \,  
$S^{\{1\}}_1\,=\,\sin \vartheta_1 \cos \varphi$\,
are surface harmonic functions of
degree $\ell  = 1$ in ${\bb R}^{3}$.
We use Remark \ref{Lapl_Trick} and proceed analogously to the
computations in Example \ref{Idee_Stokes_n=2}  to find
\begin{equation} \label{EWS_3}
-\,\Delta
{\underline{w}}_{\mathfrak{c}}\,=\,
- \tilde{c}\cdot
\left[
		\begin{array}{l}
- \Delta \varrho(r)\sin \vartheta_1 \sin \varphi\\
\,\,\Delta\varrho(r)\sin \vartheta_1 \cos \varphi\\
\quad \quad 0
		\end{array}
		\right] \,=\,\tilde{c}\cdot (\kappa_{1,S}({\cal A}))^2
\left[
		\begin{array}{l}
- \varrho(r)\sin \vartheta_1 \sin \varphi\\
\,\,\varrho(r)\sin \vartheta_1 \cos \varphi\\
\quad \quad 0
		\end{array}
		\right]\,=\,(\kappa_{1,S}({\cal A}))^2{\underline{w}}_{\mathfrak{c}}\,.
\end{equation}
The corresponding Bessel functions to the surface harmonic functions of
the degree $\ell  = 1$ in \eqref{Stokes_Allgem_Rho} are $J_{\frac{3}{2}}$ and $J_{-\frac{3}{2}}$.
We refer to the proof of the Theorem in
Subsection 6.4.4 in in \cite{Triebel} for the calculations.
It is quite evident, that 
${\underline{w}}_{{\;\!}1,S,{\cal A}}({\underline{x}})\,=\,\tilde{c}
\, \varrho(r) \cdot {\underline{\mathfrak{w}}}_{{\;\!}0}$
is an eigenfunction of the Stokes operator on ${\Omega}_{(3),{\cal A}}$ to the first eigenvalue $\lambda_{1,S}({\cal A})\,=\,
(\kappa_{1,S}({\cal A}))^2$ .
\end{example}
\begin{proof} (of Thm.~\ref{thm_Stokes_n}) We follow the path already
  chosen in  Examples \ref{Idee_Stokes_n=2} and \ref{Idee_Stokes_n=3}.
We only have to prove our Theorem for $n \geq 4 $.
In what follows let us choose $n=4$ as an illustration together
with the general case $n=n$. 
We write the eigenfunctions with the abbreviations in
\eqref{Stokes_Allgem_Rho}
%and $\tilde{c}_{1,S,{\cal A}}\,=\,\tilde{c}$ 
as 
\begin{align}\label{Proof_Stokes_Allgem_EF}
{\underline{w}}_{\mathfrak{s}}({\underline{x}})\,=\,
{\underline{w}}_{{\;\!}1,S,{\cal A}}({\underline{x}}) \,:=\,{\tilde{c}_{1,S,{\cal A}}}
\left\{
\begin{array}{ll}
\, \varrho(r) \sin{\vartheta_{1}}\sin{\vartheta_{2}}\cdots \sin{\vartheta_{n-2}}\cdot 
{\underline{\mathfrak{e}}}_{{\;\!}\varphi}\,& \forall \,n\,\in\,{\bb N}\,,\,n>4 \\
{~} & {~} \\ 
\, \varrho(r) \,\sin{\vartheta_{1}}\sin{\vartheta_{2}}\cdot 
{\underline{\mathfrak{e}}}_{{\;\!}\varphi}\,& \text{for } \,n=4\,\,.
\end{array}\right.
\end{align}
From now on we abbreviate $\tilde{c}_{1,S,{\cal A}}\,=\,\tilde{c}$.
The functions in \eqref{Proof_Stokes_Allgem_EF} are of the structure 
${\underline{w}}_{\mathfrak{s}}({\underline{x}})\,=\,F(.)\cdot 
{\underline{\mathfrak{e}}}_{{\;\!}\varphi}$, where $F(.)$  does not
depend on $\varphi$. So we see by Remark \ref{Div_phi} (especially
Eqs. \eqref{DIV=4} and \eqref{DIV=n}), that the divergence of the
functions  
${\underline{w}}_{\mathfrak{s}}({\underline{x}})$ vanishes. 
It is easy to check, that homogeneous Dirichlet boundary conditions
on $\partial{\Omega}_{(4),{\cal A}}$
resp. $\partial{\Omega}_{(n),{\cal A}}$ hold due to \eqref{Stokes_Allgem_EW}.\\[1mm]
Now we transform ${\underline{w}}_{\mathfrak{s}}$
into Cartesian coordinates and see with %Corollary \ref{Stokes_sph_harm_l=1}.
Remark \ref{R1} and the relations \eqref{R1_T=4} and \eqref{R1_T=n} that
\begin{align*} 
{\underline{w}}_{\mathfrak{c}}\,=\,
{\underline{\underline{T}}}_{{\mathfrak{c}},{\mathfrak{s}}}{\underline{w}}_{\mathfrak{s}}\,
=\,&
\tilde{c}\varrho(r) \sin \vartheta_1 \sin \vartheta_2\,
\, 
\left[
		\begin{array}{l}
- \sin \varphi\\
\,\, \,\cos \varphi \\
\quad 0 \\
\quad 0
		\end{array}
		\right] \, & \text{for } n=4\,\,\text{and }\\
{\underline{w}}_{\mathfrak{c}}\,=\,
{\underline{\underline{T}}}_{{\mathfrak{c}},{\mathfrak{s}}}{\underline{w}}_{\mathfrak{s}}\,
=\,&
\tilde{c}\varrho(r) \sin \vartheta_1 \sin{\vartheta_{2}}\cdots \sin{\vartheta_{n-2}}\,
\left[
		\begin{array}{l}
- \sin \varphi\\
\,\, \,\cos \varphi \\
\quad 0 \\
\quad  \,  \vdots\\
\quad 0
		\end{array}
		\right] \,& \quad \text{for all } \,n\,\in\,{\bb N}\,,\,n>4 \,\, .
\end{align*}
Again, in what follows columns are coordinates. We only have to consider the two
non-vanishing components $w_{1,{\mathfrak{c}}}$ and $w_{2,{\mathfrak{c}}}$ in the component-by-component view:
${\underline{w}}_{\mathfrak{c}}\,=\,
w_{1,{\mathfrak{c}}}\cdot{\underline{\mathfrak{e}}}_{1}\,+\,w_{2,{\mathfrak{c}}}\cdot{\underline{\mathfrak{e}}}_{2}
\,+\,{\underline{0}}$.
%Let us move on to the general topic ${\bb R}^{n},\,n\,\geq 4$.
We denote the surface harmonic functions of the degree $\ell  = 1$ for $n\,\geq 4$ (cf.
Notation \ref{SurfHarm0+1}) by 
$S^{\{1\}}_2\,=\,\sin \vartheta_1 \sin{\vartheta_{2}}\cdots \sin{\vartheta_{n-2}} \sin \varphi$ \,\,and \,\,
$S^{\{1\}}_1\,=\,\sin \vartheta_1 \sin{\vartheta_{2}}\cdots \sin{\vartheta_{n-2}} \cos \varphi$.
Now we have made all the preparations to follow component-by-component
some steps  in the  
proof of the Theorem from Subsection 6.4.4 in \cite{Triebel}. One only
has to replace the 
functions $\frac{1}{r^{\frac{n}{2}-1}}J_{\ell+\frac{n}{2}-1}({{\kappa}} \,r)$ for $\ell = 1$ 
by the functions $\varrho(r)$ from \eqref{Stokes_Allgem_Rho} and for
any ${\cal A}\,\in\,(0,\infty)$ to
replace ${{\kappa}}\,=\,{{{\mu}_{k}^{\ell+\frac{n}{2}-1}}}$
by $\kappa_{1,S}({\cal A})$. % (i.e. for any ${\cal A}\,\in\,(0,\infty)$
                           % the $\kappa_{1,S}({\cal A})$ in contrast
                           % to
                           % \cite{Triebel}:${\blue{\kappa}}\,=\,{\blue{{\mu}_{k}^{\ell+\frac{n}{2}-1}}}$ )
We note, that in the functions $\varrho(r)$ we have the weight
$\frac{1}{r^{\frac{n}{2}-1}}$ as 
a factor of a linear combination of
the Besselfunctions $J_{\frac{n}{2}}$ and $J_{-\frac{n}{2}}$ 
resp. $Y_{\frac{n}{2}}$, where the second functions are finite for ${\cal A}>0$. 
The functions $\varrho(r)$ from \eqref{Stokes_Allgem_Rho} with the Besselfunctions $J_{\frac{n}{2}}$ and $J_{-\frac{n}{2}}$ 
resp. $Y_{\frac{n}{2}}$ of degree $\ell = 1$ are the 
perfect match
resp. the perfect couple 
in view on the corresponding spherical harmonic functions of degree $\ell = 1$:\,
$S^{\{1\}}_1$\,and \, $S^{\{1\}}_2$.
Let $$\underline{h}:=(- \varrho(r) \sin \vartheta_1 \sin{\vartheta_{2}}\cdots \sin{\vartheta_{n-2}} \sin \varphi\,,
\,\,\varrho(r)\sin \vartheta_1 \sin{\vartheta_{2}}\cdots
\sin{\vartheta_{n-2}} \cos \varphi\,,
0 \,,\dots\,,0)^T\,.$$
With this abbreviation applying Remark \ref{Lapl_Trick} we arrive at
\begin{equation*}
- \Delta
{\underline{w}}_{\mathfrak{c}}\,=\,
- \tilde{c}\cdot \Delta \underline{h}
% \left[
% 		\begin{array}{r}
% - \varrho(r) \sin \vartheta_1 \sin{\vartheta_{2}}\cdots \sin{\vartheta_{n-2}} \sin \varphi\\
% \,\,\varrho(r)\sin \vartheta_1 \sin{\vartheta_{2}}\cdots \sin{\vartheta_{n-2}} \cos \varphi\\
% \quad \quad 0\\
% \quad \quad \vdots \\
% \quad \quad 0
% \end{array}
% 		\right] 
  \,=\,
		\tilde{c}\cdot (\kappa_{1,S}({\cal A}))^2 \underline{h}
% \left[
% \begin{array}{r}
% 		- \varrho(r) \sin \vartheta_1 \sin{\vartheta_{2}}\cdots \sin{\vartheta_{n-2}} \sin \varphi\\
% \,\,\varrho(r)\sin \vartheta_1 \sin{\vartheta_{2}}\cdots \sin{\vartheta_{n-2}} \cos \varphi\\[.2cm]
% \quad \quad 0\\
% \quad \quad \vdots \\
% \quad \quad 0
% \end{array}
% 		\right]\,=\,
		= (\kappa_{1,S}({\cal A}))^2{\underline{w}}_{\mathfrak{c}}\,=\,
		{\lambda}_{1,S}({\cal A})\,{\underline{w}}_{\mathfrak{c}}\,\,,
\end{equation*}
\noindent  where, again, we reason as in the step 3
of Triebel's proof in \cite{Triebel}.
The smallest positive solutions
$\kappa_{1,S}({\cal A})$ of \eqref{Stokes_Allgem_EW} now play the same role as the first 
root ${{{\mu}_{1}^{\ell+\frac{n}{2}-1}}}$ of the Bessel 
functions $J_{\ell+\frac{n}{2}-1}(.)\,=\,J_{\frac{n}{2}}(.)\,$ at $\ell = 1$ 
in \cite{Triebel}.
We complete with the following arguments:
The degree $\ell = 1$ for the spherical harmonic functions is the smallest possible degree
taking into account Corollary \ref{Stokes_sph_harm_l=1}. The functions $\varrho(r)$ from \eqref{Stokes_Allgem_Rho}
have to be composed of the corresponding Bessel functions at $\ell = 1$.
We have the degree $\ell = 1$ as the smallest possible degree and the smallest positive solutions
$\kappa_{1,S}({\cal A})$ of \eqref{Stokes_Allgem_EW} and for this reason with ${\lambda}_{1,S}({\cal A})$
the  smallest eigenvalue
of the Stokes operator on ${\Omega}_{(n),{\cal A}}$.
\end{proof}
%%%%%%%%%%%%%%%%%%%%%%%%%%%%%%%%
\noindent We formulate the corresponding result for the Stokes operator
on the open unit ball in ${\bb R}^{n}$.
%%%%%%%%%%%%%%
\begin{corollary} \label{Stokes_Unit_ball}
Let ${\bb R}^{n}, n>1$, and ${\Omega}_{(n)}^{*}\subset {\bb R}^{n}$ be the open unit ball. We denote the Stokes 
operator on 
${\Omega}_{(n)}^{*}$ by ${\boldsymbol S}_{o}$.
For any $n\,\in\,{\bb N},\,n>1$, the first simple eigenvalue
${\lambda}_{1,S}$ of  %the Stokes operator
${\boldsymbol S}_{o}$ %on  ${\Omega}_{(n)}^{*}$
is the square of 
the smallest  positive solution
$\kappa_{1,S}$  of
\begin{align}\label{Stokes_einheitskugel_EW}
J_{\frac{n}{2}}(\kappa_{S}) =0\quad \quad \quad \quad \,\forall \,n>1\,.
\end{align}
For any $n>1$ one of the corresponding first eigenfunctions are the vector fields
\begin{align}\label{Stokes_Allgem_Einsball_EF}
{\underline{w}}_{{\;\!}\varphi,S}({\underline{x}})\,:=\,{\tilde{c}_{1,S}}
\left\{
\begin{array}{ll}
\, \varrho(r) \sin{\vartheta_{1}}\sin{\vartheta_{2}}\cdots \sin{\vartheta_{n-2}}\cdot 
{\underline{\mathfrak{e}}}_{{\;\!}\varphi}\,& \forall \,n\,\in\,{\bb N}\,;\,n>2  \\
{~} & {~} \\ 
\, \varrho(r) \cdot 
{\underline{\mathfrak{e}}}_{{\;\!}\varphi}\,& 
\text{for }\,
\,n=2 \, ,
\end{array}\right.
\end{align}
where the $\tilde{c}_{1,S}$ denote the corresponding 
scaling constants. % in the ${\underline{\bb L}}_{{2}}(.)$-sense.\\
The $\varrho(r)$ are scalar functions of \,$r\,\in\,(0,1)$ defined as
\begin{align}\label{Stokes_Allgem_Rho_Einsball}
\varrho(r)
\,:=\,\frac{1}{r^{\frac{n}{2}-1}}
J_{\frac{n}{2}}(\kappa_{1,S} \,r)
&\,\quad\quad\forall \,
n\,\in\,{\bb N},\,n >1. 
\end{align}
Especially one has \quad ${\boldsymbol S}_{o}\,{\underline{w}}_{{\;\!}\varphi,S}\,=\,
{\lambda}_{1,S}\,{\underline{w}}_{{\;\!}\varphi,S}\,=\,(\kappa_{1,S})^2\,{\underline{w}}_{{\;\!}\varphi,S}\,.$
All is well-defined in $r=0$ as well.
\end{corollary}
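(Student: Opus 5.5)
The plan is to transfer the proof of Theorem \ref{thm_Stokes_n} to the ball ${\Omega}_{(n)}^{*}$. The annular factor $\varrho(r)$ from \eqref{Stokes_Allgem_Rho} is replaced by the regular solution $r^{1-\frac{n}{2}}J_{\frac{n}{2}}(\kappa r)$ alone, and the radial part of the argument is then exactly the $\ell=1$ case of the Theorem in Subsection 6.4.4 of \cite{Triebel}, as quoted in Note \ref{Trieb_Laplace_1}.

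\textbf{Verification of the eigenfunction.} The field \eqref{Stokes_Allgem_Einsball_EF} has the form $F\cdot{\underline{\mathfrak{e}}}_{\varphi}$ with $F$ independent of $\varphi$, so Remark \ref{Div_phi} gives $\mathrm{div}\,{\underline{w}}_{\varphi,S}=0$.

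Using \eqref{R1_T=n} (resp.\ \eqref{R1_T=2}, \eqref{R1_T=4}), the Cartesian form is
\[
{\underline{w}}_{\mathfrak{c}}=\tilde{c}_{1,S}\,\varrho(r)\,\big(-S^{\{1\}}_2,\,S^{\{1\}}_1,\,0,\dots,0\big)^{T}/\big(\text{angular factor normalised}\big),
\]
i.e.\ the two nonzero components are $\mp\varrho(r)$ times a degree-one surface harmonic. By Remark \ref{Lapl_Trick}, $\Delta$ acts componentwise on these two entries. Writing $\Delta$ in polar form and using Theorem \ref{thmsbeltr} (Beltrami eigenvalue $\ell(\ell+n-2)=n-1$ for $\ell=1$), each component reduces to the radial operator
\[
\varrho''+\tfrac{n-1}{r}\varrho'-\tfrac{n-1}{r^{2}}\varrho .
\]
The substitution $\varrho=r^{1-\frac n2}Z(\kappa r)$ turns this into Bessel's equation of order $\frac n2$, exactly as in \eqref{Rechnen_n_2}. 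Hence $-\Delta{\underline{w}}_{\mathfrak{c}}=\kappa^{2}{\underline{w}}_{\mathfrak{c}}$ for $Z=J_{\frac n2}$.

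\textbf{Regularity at the origin and the boundary condition.} Since $J_{\frac n2}(\kappa r)\sim c\,r^{\frac n2}$, we get $\varrho(r)\sim c\,r$. Thus each Cartesian component behaves like $c\,r\,S^{\{1\}}_j$, which is a linear function of $x_1,x_2$ near $0$; this is smooth, so the field is well-defined at $r=0$. Since ${\underline{w}}_{\mathfrak{c}}$ is a smooth solenoidal eigenfunction of the vector Laplacian with zero pressure, it lies in $D({\boldsymbol S}_{o})$ and ${\boldsymbol S}_{o}{\underline{w}}=\kappa^{2}{\underline{w}}$. The Dirichlet condition on $\omega_{(n)}$ reads $\varrho(1)=0$, i.e.\ \eqref{Stokes_einheitskugel_EW}.

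\textbf{Minimality (main obstacle).} It remains to show that the smallest root $\kappa_{1,S}$ of $J_{\frac n2}$ gives the \emph{smallest} Stokes eigenvalue. By Corollary \ref{Stokes_sph_harm_l=1}, which holds on ${\Omega}^{*}_{(n)}$ via Lemma \ref{LE1}, any eigenfunction has in its Cartesian components a surface-harmonic factor of degree $\ell\ge1$. Expanding each component in spherical harmonics, every component is then a Laplace eigenfunction on the ball for the same eigenvalue with only degrees $\ell\ge1$. Because $-\Delta$ acts componentwise, the eigenvalue is therefore $(\mu_k^{\ell+\frac n2-1})^{2}$ for some $\ell\ge1$, where $\mu_k^{\nu}$ denotes the $k$-th positive zero of $J_\nu$.

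The zeros of $J_\nu$ increase in $\nu$, so $\mu_k^{\ell+\frac n2-1}\ge\mu_1^{\frac n2}=\kappa_{1,S}$. This yields $\lambda_{1,S}=\kappa_{1,S}^{2}$.

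The delicate point is justifying that each component is itself a Dirichlet Laplace eigenfunction. This needs the pressure to vanish, i.e.\ $-\Delta{\underline{w}}=\lambda{\underline{w}}$ without a gradient term. I would argue this only for the constructed field and otherwise lean on Corollary \ref{Stokes_sph_harm_l=1}, mirroring the reasoning at the end of the proof of Theorem \ref{thm_Stokes_n}. A Rayleigh-quotient comparison is available as a backup, using $\|\nabla{\underline{u}}\|^{2}=\sum_j\|\nabla u_j\|^{2}$ with each $u_j$ orthogonal to the radial ($\ell=0$) modes.
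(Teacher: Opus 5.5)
Your verification that the displayed field is an eigenfunction is exactly what the paper's one-line proof (``recapitulate the proof of Theorem~\ref{thm_Stokes_n}'') amounts to, and it is fine. That covers the divergence via Remark~\ref{Div_phi}, the Cartesian form, the componentwise Laplacian, the Bessel equation of order $\frac n2$, and $\varrho(1)=0$. The genuine gap is the minimality step. It rests on two claims that are false for general Stokes eigenfunctions.

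First, an eigenfunction satisfies $-\Delta\underline{w}+\nabla p=\lambda\underline{w}$, and in general $\nabla p\not\equiv\underline{0}$ (e.g.\ the poloidal modes for $n=3$ in \cite{RumTh2024}). So its Cartesian components are not Dirichlet Laplace eigenfunctions.

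Second, Cartesian components of solenoidal fields can contain $\ell=0$ parts. Corollary~\ref{Stokes_sph_harm_l=1} only says that \emph{some} component has an $\ell\ge1$ factor. Indeed, $u_j=\mathrm{div}(x_j\underline{u})$ gives $\int_{\|\underline{x}\|<r}u_j\,d\underline{x}=\int_{\omega_{(n),r}}x_j\,(\underline{u}\cdot\underline{\mathfrak{e}}_{r})\,dS$, which vanishes for $r=1$ but not for $r<1$. For $n=2$, take $\underline{u}=(\partial_2\psi,-\partial_1\psi)$ with $\psi=\eta(r)x_2$, $\eta\in C_o^\infty(0,1)$; the circle mean of $u_1$ is $\eta+\frac r2\eta'\not\equiv0$. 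The same example defeats your Rayleigh-quotient backup, since the $u_j$ are not orthogonal to radial functions. Mirroring the end of the proof of Theorem~\ref{thm_Stokes_n} does not help either, because that argument makes exactly the same unjustified appeal to Corollary~\ref{Stokes_sph_harm_l=1}.

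To close the gap, treat the two kinds of eigenfunctions separately.

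If $\nabla p\equiv\underline{0}$, set $h:=\underline{x}\cdot\underline{w}$. Since $\Delta(\underline{x}\cdot\underline{w})=\underline{x}\cdot\Delta\underline{w}+2\,\mathrm{div}\,\underline{w}$, it satisfies $-\Delta h=\lambda h$. Also $h=\partial_r h=0$ on $\omega_{(n)}$, because $\underline{w}=\underline{0}$ and $\mathrm{div}\,\underline{w}=0$ there. Hence $h\equiv0$ by unique continuation. The flux identity above then shows that every spherical mean of every $w_j$ vanishes, and your comparison of Bessel zeros gives $\lambda\ge\kappa_{1,S}^2$.

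If $\nabla p\not\equiv\underline{0}$, then $\Delta p=0$, and $\underline{v}:=\underline{w}-\lambda^{-1}\nabla p$ is solenoidal with $-\Delta\underline{v}=\lambda\underline{v}$. Split by rotation invariance. In the poloidal sector of degree $\ell\ge1$, where $p=c\,r^{\ell}S$, the normal and tangential parts of $\underline{w}=\underline{0}$ on $\omega_{(n)}$ give a $2\times2$ system whose determinant is proportional to $J_{\ell+\frac n2}(\kappa)$. The first zero of $J_{\ell+\frac n2}$ exceeds $\kappa_{1,S}$.

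Finally, the word ``simple'' in the statement is false for $n\ge3$. Every $r^{-\frac n2}J_{\frac n2}(\kappa_{1,S}r)\,A\underline{x}$ with $A$ antisymmetric is an eigenfunction, so the multiplicity is at least $n(n-1)/2$; the introduction itself notes multiplicity $3$ for $n=3$.
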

%%%%%%%%%%%%%%%%%%%%%%%%%%%%%%%%%
\begin{proof} We simply recapitulate the arguments and the ideas of the proof of 
Theorem \ref{thm_Stokes_n}.
\end{proof}
%%%%%%%%%%%%%%%%%%%%%%%%%%%%%%%%%
%\begin{corollary} \label{col_thm_sigma_Stokes_n}
\noindent The translation of Theorem \ref{thm_Stokes_n} to the %first
                                %eigenvalues and one of the first
                                %eigenfunctions of
Stokes 
operator ${\boldsymbol S}_{{\sigma}}$ on the ${\Omega}^{*}_{(n),\sigma}$ 
in ${\bb R}^{n}$ is simple. One uses the conversion formulas in
Eq. \eqref{kappa_A_sigma}, namely, 
$\kappa_{1,S}({\sigma})  =({1+{\cal A}/{2}})
\kappa_{1,S}({\cal A})\,$ and the conversion between 
the quantities ${\cal A}$ and ${\sigma}$.
\begin{corollary} \label{col_thm_sigma_Stokes_n}
For any $n\,\in\,{\bb N},\,n>1$ and for any
${\sigma}\,\in\,(0,1)$
% and ${\Omega}_{{\sigma}}={\Omega}_{(n),{\sigma}}\subset {\bb R}^{n}$ 
the first eigenvalues
${\lambda}_{1,S}({\sigma})$ of the Stokes operator on
${\Omega}^{*}_{(n),{{\sigma}}},\,\sigma\,\in\,(0,1)$ are 
the squares of the smallest positive solutions
$\kappa_{1,S}({\sigma})$
of %the following transcendental equations for arbitrary ${\sigma}\,\in\,(0,1)$:
\begin{align}\label{Stokes_Allgem_EW_sigma}
0\,=\,\left\{\begin{array}{ll}
J_{\frac{n}{2}}(\kappa_{S}({\sigma}))Y_{\frac{n}{2}}(\kappa_{S}({\sigma}) {\sigma})
\,-\,J_{\frac{n}{2}}(\kappa_{S}({\sigma}){\sigma})Y_{\frac{n}{2}}(\kappa_{S}({\sigma})) 
& \forall \,
n\mbox{ even} \,\\[2mm]
J_{\frac{n}{2}}(\kappa_{S}({\sigma}))J_{-\frac{n}{2}}(\kappa_{S}({\sigma}) \sigma)
\,-\,J_{\frac{n}{2}}(\kappa_{S}({\sigma}) {\sigma})J_{-\frac{n}{2}}(\kappa_{S}({\sigma})) & \forall \,
n \mbox{ odd} \,.
\end{array}
\right.
\end{align}
The corresponding first eigenfunctions %on
% ${\Omega}^{*}_{(n),{{\sigma}}},\,\sigma\,\in\,(0,1)$
are %the fields
\begin{align}\label{Stokes_Allgem_EF_sigma}
{\underline{w}}_{{\;\!}\varphi,S,{\sigma}}({\underline{x}})\,:=\,{\tilde{c}_{1,S,{\sigma}}}
\left\{
\begin{array}{ll}
\, \varrho(r) \sin{\vartheta_{1}}\sin{\vartheta_{2}}\cdots \sin{\vartheta_{n-2}}\cdot 
{\underline{\mathfrak{e}}}_{{\;\!}\varphi}\,& \forall \,n\,\in\,{\bb N}\,;\,n>2  \\
{~} & {~} \\ 
\, \varrho(r) \cdot 
{\underline{\mathfrak{e}}}_{{\;\!}\varphi}\,& 
\text{for }\,
\,n=2 \, 
\end{array}\right.\,,
\end{align}
(cf. Corollary \ref{Stokes_sph_harm_l=1} and the corresponding spherical harmonics of the degree 
$\ell = 1$ given in Eq. \eqref{SphSurf1}).
Again, $\tilde{c}_{1,S,{\sigma}}$ are  
scaling constants % in the ${\underline{\bb L}}_{{2}}(.)$-sense
and the functions $\varrho(r)$ only depend on $r\,\in\,({\sigma},1)$
and are given as
\begin{align}\label{Stokes_Allgem_Rho_sigma}
\varrho(r)
\,:=\,\frac{1}{r^{\frac{n}{2}-1}}
\left\{
\begin{array}{ll}
(J_{\frac{n}{2}}(\kappa_{1,S}({\sigma})r)\,-\,
\frac{J_{\frac{n}{2}}({\scriptstyle{\kappa_{1,S}({\sigma}){\sigma}}})}
{Y_{{\frac{n}{2}}}({\scriptstyle{\kappa_{1,S}({\sigma}){\sigma}}})}
Y_{\frac{n}{2}}( \kappa_{1,S}({\sigma}) r)
)& \forall \,
n\mbox{ even} \,,\\[2mm]
(J_{\frac{n}{2}}(\kappa_{1,S}({\sigma})r)\,-\,
\frac{J_{{\frac{n}{2}}}({\scriptstyle{\kappa_{1,S}({\sigma}){\sigma}})}}
{J_{-{\frac{n}{2}}}({\scriptstyle{\kappa_{1,S}({\sigma}){\sigma}}})}
J_{-\frac{n}{2}}( \kappa_{1,S}({\sigma}) r)
)&\forall \,
n\mbox{ odd}\, . 
\end{array}
\right.
\end{align}
\end{corollary}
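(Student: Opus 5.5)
The plan is to obtain Corollary \ref{col_thm_sigma_Stokes_n} from Theorem \ref{thm_Stokes_n} by dilation. No new spectral analysis is needed. Given $\sigma\in(0,1)$, set ${\cal A}:=2\sigma/(1-\sigma)$, so that $\sigma={\cal A}/({\cal A}+2)$ as in \eqref{calA}. Then the map $\Phi(\underline{x}):=\underline{x}/(1+{\cal A}/2)$ is a bijection of ${\Omega}_{(n),{\cal A}}$ onto ${\Omega}^{*}_{(n),\sigma}$. Indeed, it sends the radii ${\cal A}/2$ and $1+{\cal A}/2$ to $\sigma$ and $1$.

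For $\underline{v}\in\underline{\cal V}({\Omega}^{*}_{(n),\sigma})$ put $\underline{u}:=\underline{v}\circ\Phi$. The Cartesian components transform as scalars, so divergence-freeness and compact support are preserved. Hence $\underline{v}\mapsto\underline{v}\circ\Phi$ is a bijection $\underline{\cal V}({\Omega}^{*}_{(n),\sigma})\to\underline{\cal V}({\Omega}_{(n),{\cal A}})$. By density it extends to bijections of $\underline{\bb H}$ and $\underline{\bb V}$, and it also preserves the Leray--Helmholtz projection $\Upsilon$.

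Under this map $\|\underline{u}\|_{D,S}^2=(1+{\cal A}/2)^{n-2}\|\underline{v}\|_{D,S}^2$ and $\|\underline{u}\|_{2}^2=(1+{\cal A}/2)^{n}\|\underline{v}\|_{2}^2$. Therefore every Rayleigh quotient $\|\cdot\|_{D,S}^2/\|\cdot\|_2^2$ on ${\Omega}^{*}_{(n),\sigma}$ equals $(1+{\cal A}/2)^2$ times the corresponding quotient on ${\Omega}_{(n),{\cal A}}$. By the min-max characterisation behind \eqref{simple-tool} and Corollary \ref{STOeiFU}, $\lambda_{1,S}(\sigma)=(1+{\cal A}/2)^2\lambda_{1,S}({\cal A})$. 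Equivalently, $\kappa_{1,S}(\sigma)=(1+{\cal A}/2)\kappa_{1,S}({\cal A})$, which is exactly \eqref{kappa_A_sigma}. At the operator level, $\boldsymbol S_\sigma(\underline{w}\circ\Phi^{-1})=(1+{\cal A}/2)^2(\boldsymbol S\underline{w})\circ\Phi^{-1}$, so eigenfunctions are carried to eigenfunctions.

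Next I would substitute $\kappa_S({\cal A})=\kappa_S(\sigma)/(1+{\cal A}/2)$ into \eqref{Stokes_Allgem_EW}. The two arguments become $\kappa_S(\sigma)$ and $\kappa_S(\sigma)\,\frac{{\cal A}/2}{1+{\cal A}/2}=\kappa_S(\sigma)\sigma$, which turns \eqref{Stokes_Allgem_EW} into \eqref{Stokes_Allgem_EW_sigma}. The smallest positive root corresponds to the smallest positive root, since $\kappa\mapsto(1+{\cal A}/2)\kappa$ is increasing.

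For the eigenfunctions, I would compose \eqref{Stokes_Allgem_EF} with $\Phi^{-1}$, i.e.\ replace $r$ by $(1+{\cal A}/2)r$ with $r\in(\sigma,1)$. The angular factor $\sin\vartheta_1\cdots\sin\vartheta_{n-2}\,\underline{\mathfrak{e}}_\varphi$ is scale-invariant. The radial factor becomes $\big((1+{\cal A}/2)r\big)^{1-n/2}$ times a Bessel combination in $\kappa_{1,S}(\sigma)r$ whose quotient coefficient is evaluated at $\kappa_{1,S}(\sigma)\sigma$. The power of $(1+{\cal A}/2)$ is absorbed into the new scaling constant $\tilde c_{1,S,\sigma}$, and this gives \eqref{Stokes_Allgem_Rho_sigma}.

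For the cases $n=2,3$ I would invoke Notes \ref{Result_n=2} and \ref{Result_n=3} together with Examples \ref{Idee_Stokes_n=2} and \ref{Idee_Stokes_n=3}; these cover the range $n>1$ of the statement, since Theorem \ref{thm_Stokes_n} itself is stated for $n>3$ (resp.\ $n>1$).

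The one step requiring care is the claim that the dilation respects the solenoidal spaces and the Stokes operator including the projection $\Upsilon$, not merely the vector Laplacian. I would handle it through the quadratic forms of the Friedrichs extensions: the forms scale uniformly, so no explicit pressure computation is needed. Everything else is routine substitution.
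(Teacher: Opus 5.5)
Your proposal is correct, and it follows the route the paper itself points to in the paragraph preceding the corollary: translate Theorem~\ref{thm_Stokes_n} via the conversion formula \eqref{kappa_A_sigma} and $\sigma={\cal A}/({\cal A}+2)$.

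The paper's one-line proof does something different. It says to repeat the direct argument of Theorem~\ref{thm_Stokes_n} on $\Omega^{*}_{(n),\sigma}$:
\begin{itemize}
\item check $\mbox{div}=0$ via Remark~\ref{Div_phi};
\item compute $-\Delta$ componentwise via Remark~\ref{Lapl_Trick} and the $\ell=1$ Bessel equation;
\item read off the boundary conditions from \eqref{Stokes_Allgem_EW_sigma};
\item argue minimality through Corollary~\ref{Stokes_sph_harm_l=1}.
\end{itemize}

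What your dilation $\Phi(\underline{x})=\underline{x}/(1+{\cal A}/2)$ adds is an actual proof of \eqref{kappa_A_sigma}, which the paper only calls obvious. The Dirichlet and $L_2$ norms scale by $(1+{\cal A}/2)^{n-2}$ and $(1+{\cal A}/2)^{n}$ respectively. In addition, $\Phi$ preserves $\underline{\cal V}$, $\underline{\bb H}$ and $\underline{\bb V}$, and commutes with $\Upsilon$. Together these transfer the eigenvalue, the whole eigenspace and the ``smallest'' property in one step. None of the separation-of-variables or $\ell=1$ minimality reasoning has to be redone.

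The trade-off is that your corollary is exactly as strong as Theorem~\ref{thm_Stokes_n} (and Notes~\ref{Result_n=2}, \ref{Result_n=3} for $n=2,3$). It therefore inherits whatever that theorem's minimality argument actually establishes. The direct repetition gives an explicit check of the eigen-equation and boundary conditions on $\Omega^{*}_{(n),\sigma}$, but its minimality claim rests on the same reasoning anyway.
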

\begin{proof} One repeats the arguments and the ideas from the proof of the
Theorem \ref{thm_Stokes_n} again.
\end{proof}
%%%%%%%%%%%%%
\subsection{First Eigenvalues of the Laplacian with respect to the first Eigenvalues of the Stokes Operator \label{sec_sto_lap_n}}
%%%%%%%%%%%%%
We formulate a shift theorem between the first eigenvalues of the
Laplacian onto the first eigenvalues 
of the Stokes operator, which is the principal result of our investigations.
In what follows we will write the dimension $n$ of the spaces ${\bb R}^{n}$ as an index in the notation
of the roots, of the eigenvalues and of the Poincaré constants too.
\begin{theorem} \label{thm_shift_Lapl_Stokes_n}
Let $n >1$  and consider ${\Omega}_{(n),{\cal
    A}},{\Omega}_{(n)}^{*}\subset {\bb R}^{n}$ resp.  
${\Omega}_{(n+2),{\cal A}},{\Omega}_{(n+2)}^{*}\subset {\bb R}^{n+2}$.
The first eigenvalues ${\lambda}_{1,S,(n)}({\cal A})$  resp. ${\lambda}_{1,S,(n)}(0)$ of the Stokes operator 
on ${\Omega}_{(n),{\cal A}}$ resp. on ${\Omega}_{(n)}^{*}$ in  ${\bb
  R}^{n}$ coincide with the first eigenvalues 
${\lambda}_{1,L,(n+2)}({\cal A})$  resp. ${\lambda}_{1,L,(n+2)}(0)$ of the Laplacian
on ${\Omega}_{(n+2),{\cal A}}$ resp. on ${\Omega}_{(n+2)}^{*}$ in  ${\bb R}^{n+2}$ for any
${\cal A}\,\in\,[0,\infty)$. 
% For any${\cal A}\,\in\,[0,\infty)$
Moreover, the transcendental equations
\eqref{Stokes_Allgem_EW} have the same roots % have the same
% roots
$\kappa_{1,S,(n)}({\cal A})\,=\kappa_{1,L,(n+2)}({\cal A})$ (for $n$) %of the transcendental equations \eqref{Stokes_Allgem_EW} for $n$
as the transcendental equations \eqref{Lapl_Allgem_EW} for $n+2$.
In addition, due to  \eqref{Stokes_einheitskugel_EW} and 
\eqref{Lapl_einheitskugel_EW} we see that \\[.25cm]
\hspace*{5cm}
$J_{\frac{n}{2}}(\kappa_{1,S,(n)})\,=\,J_{\frac{n+2}{2}-1}(\kappa_{1,L,(n+2)})\,=\,0\,.$\\[.1cm]
In particular,  %this implies that
for the Poincar\'{e} constants it holds that $c_{p,S,(n)}({\cal A})\,=\,c_{p,(n+2)}({\cal A})$
$ \forall\,{\cal A}\,\in\,[0,\infty)$ and  $\forall\,n \,\in\,{\bb N} ,\,n >1$.
\end{theorem}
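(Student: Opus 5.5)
The plan is to compare the two transcendental equations directly, index by index, using Theorem \ref{thm_Lapl_n} in dimension $n+2$ and Theorem \ref{thm_Stokes_n} in dimension $n$. The key observation is that the Bessel order in the Laplace equation \eqref{Lapl_Allgem_EW} for dimension $n+2$ is $\frac{n+2}{2}-1=\frac{n}{2}$. This is exactly the order in the Stokes equation \eqref{Stokes_Allgem_EW} for dimension $n$.

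\textbf{Matching the equations.} Since $n$ and $n+2$ have the same parity, the same branch is used in both cases: the $J/Y$ branch for $n$ even, and the $J_{\nu}/J_{-\nu}$ branch for $n$ odd. Writing out \eqref{Lapl_Allgem_EW} with $n$ replaced by $n+2$ therefore reproduces \eqref{Stokes_Allgem_EW} literally, as functions of the unknown $\kappa$ for every fixed ${\cal A}\in(0,\infty)$. Two functions that are identical have the same positive zeros, and in particular the same smallest positive zero. This gives $\kappa_{1,S,(n)}({\cal A})=\kappa_{1,L,(n+2)}({\cal A})$.

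\textbf{Eigenvalues and the case $n\in\{2,3\}$.} Squaring the common root, Theorems \ref{thm_Lapl_n} and \ref{thm_Stokes_n} give ${\lambda}_{1,S,(n)}({\cal A})={\lambda}_{1,L,(n+2)}({\cal A})$. For $n=2,3$ I would not rely on Theorem \ref{thm_Lapl_n}, which is stated only for $n>3$, beyond the overlap $n+2\ge4$. Instead I would invoke Theorem \ref{thm_Stokes_n} in its stated "resp.\ $n>1$" form, together with Notes \ref{Result_n=2} and \ref{Result_n=3}. The Laplace side in dimension $4$ or $5$ is already covered by Theorem \ref{thm_Lapl_n}. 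For instance, for $n=2$ the Stokes equation \eqref{trans2n=2} has order $1$, which is the Laplace order $\frac{4}{2}-1$ for $n+2=4$.

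\textbf{The case ${\cal A}=0$.} Here the domains are the unit balls. I would compare \eqref{Stokes_einheitskugel_EW} from Corollary \ref{Stokes_Unit_ball} with \eqref{Lapl_einheitskugel_EW} from Note \ref{Trieb_Laplace_1} written in dimension $n+2$. Both equations read $J_{\frac n2}(\kappa)=0$, so their first positive zeros coincide. This yields the displayed identity $J_{\frac{n}{2}}(\kappa_{1,S,(n)})=J_{\frac{n+2}{2}-1}(\kappa_{1,L,(n+2)})=0$.

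\textbf{Poincar\'e constants.} The statement about the constants follows from \eqref{simple-tool}: $c_{p,S,(n)}({\cal A})=\lambda_{1,S,(n)}({\cal A})^{-1/2}=\lambda_{1,L,(n+2)}({\cal A})^{-1/2}=c_{p,(n+2)}({\cal A})$. The case ${\cal A}=0$ is read on the ball.

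\textbf{Main obstacle.} The computation itself is pure bookkeeping. The substantive point is that the \emph{smallest} positive root of each equation really gives the \emph{first} eigenvalue. On the Laplace side this is the minimality asserted in Theorem \ref{thm_Lapl_n}: the radial, degree $\ell=0$ mode gives the lowest eigenvalue. On the Stokes side it is the minimality in Theorem \ref{thm_Stokes_n}, which uses Corollary \ref{Stokes_sph_harm_l=1} to force degree $\ell\ge1$ and identifies $\ell=1$ as optimal. I would take both minimality claims as established earlier and simply point to them. I would also note that the identity of the equations already reflects a structural fact: the radial Bessel operator for Stokes with $\ell=1$ in dimension $n$ coincides with the radial Bessel operator for Laplace with $\ell=0$ in dimension $n+2$ after the weight $r^{-(n/2-1)}$ versus $r^{-n/2}$. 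This explains the shift by two dimensions.
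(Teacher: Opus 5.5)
Your proposal is correct and matches the paper's proof. The paper also notes that \eqref{Lapl_Allgem_EW} in dimension $n+2$ and \eqref{Stokes_Allgem_EW} in dimension $n$ are the same equation (order $\frac{n+2}{2}-1=\frac n2$, same parity branch), so their smallest positive roots coincide, and it handles ${\cal A}=0$ by comparing \eqref{Lapl_einheitskugel_EW} with \eqref{Stokes_einheitskugel_EW}, both of which reduce to $J_{\frac n2}(\kappa)=0$. Like you, it takes the minimality claims of Theorems \ref{thm_Lapl_n} and \ref{thm_Stokes_n} as already established, and the Poincar\'e constant identity then follows at once from \eqref{simple-tool}.
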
 
%%%%%%%%%%%%%%%
\begin{proof} We only have to consider the transcendental equations
  for the first roots  
$\kappa_{1,L,(n+2)}({\cal A})$ and $\kappa_{1,S,(n)}({\cal A})$ 
in Theorems \ref{thm_Lapl_n} and \ref{thm_Stokes_n}.
The equations \eqref{Lapl_Allgem_EW} for $n+2$  and \eqref{Stokes_Allgem_EW}  for $n$ agree
for arbitrary ${\cal A}\,\in\,(0,\infty)$. It is trivial to conclude that  
the smallest positive solutions $\kappa_{1,L,(n+2)}({\cal A})$ and $\kappa_{1,S,(n)}({\cal A})$
coincide. With the same argument we look on Note \ref{Trieb_Laplace_1} for the Laplace 
operator ${\boldsymbol L}_{o}$ on the unit ball ${\Omega}_{(n+2)}^{*}$
and Corollary \ref{Stokes_Unit_ball}
for the Stokes operator ${\boldsymbol S}_{o}$ on the unit ball ${\Omega}_{(n)}^{*}$.
There are the $\kappa_{1,L,(n+2)}(0)$, namely
the smallest positive roots of \eqref{Lapl_einheitskugel_EW} and the
$\kappa_{1,S,(n)}(0)$, which are 
the smallest positive roots of \eqref{Stokes_einheitskugel_EW}.
The simplest formulation is: $\kappa$ is the smallest positive root of
$  
J_{\frac{n}{2}}(\kappa) \,=\,0 \,\,\,\,\text{and }\, \kappa\,=\,\kappa_{1,L,(n+2)}(0)\,=\,\kappa_{1,S,(n)}(0)\,=
\,\kappa_{1,L,(n+2)}(o)\,=\,\kappa_{1,S,(n)}(o)
$.
\end{proof}
%%%%%%%%%%%%%%%%
\noindent The use of the conversion formulas \eqref{kappa_A_sigma}
provide the shift properties on ${\Omega}^{*}_{(n),\sigma}$.
%%%%%%%%%%%%%%%
\begin{remark} \label{rm_shift_Lapl_Stokes_sigma_n}
Let $n >1$.
The first eigenvalue ${\lambda}_{1,S,(n)}({\sigma})$ of the Stokes operator ${\boldsymbol S}_{\sigma}$
on ${\Omega}^{*}_{(n),\sigma}$ in  ${\bb R}^{n}$ coincides with the first eigenvalue
${\lambda}_{1,L,(n+2)}(\sigma)$  of the Laplacian ${\boldsymbol L}_{\sigma}$
on ${\Omega}^{*}_{(n),\sigma}$  in  ${\bb R}^{n+2}$ for all
$\sigma\,\in\,(0,1)$. 
We have the same roots $\kappa_{1,S,(n)}(\sigma)\,=\kappa_{1,L,(n+2)}(\sigma)$ of the 
transcendental equations in Corollary \ref{col_thm_sigma_Stokes_n}\, for $\kappa_{1,S,(n)}(\sigma)$
and of the transcendental equations \eqref{Lapl_sigma_GL_EW} from Corollary \ref{col_sigma_Lapl_n} for 
$\kappa_{1,L,(n+2)}(\sigma)$ 
$\forall \,\sigma\,\in\,(0,1)$.
In particular, this implies
for the Poincaré constants that $c_{p,S,(n)}({\sigma})\,=\,c_{p,(n+2)}({\sigma})$
for all $\sigma\,\in\,(0,1)$ and  for all $n \,\in\,{\bb N} ,\,n >1$.
\end{remark} 
\begin{proof}
One can transmit the proof of Theorem \ref{thm_shift_Lapl_Stokes_n} or simply apply 
the conversion formulas \eqref{kappa_A_sigma}.
\end{proof}
\noindent Using Theorem \ref{thm_shift_Lapl_Stokes_n} it is
possible to transfer a characteristic property
of the first simple eigenvalues ${\lambda}_{1,L,(n+2)}({\cal A})$ onto the first eigenvalues ${\lambda}_{1,S,(n)}({\cal A})$ 
for any ${\cal A}\,\in\,{\blue{[}}(0,\infty)$.
%%%%%%%%%%%%%
\begin{note} \label{small_gap_trick}
For any ${\cal A}\,\in\,[0,\infty)$ the first eigenvalue ${\lambda}_{1,S,(n)}({\cal A})$ of the Stokes operator on ${\Omega}_{(n),{\cal A}}$ 
possesses exactly the same property as the first simple eigenvalue ${\lambda}_{1,L,(n+2)}({\cal A})$
of the Laplace operator on ${\Omega}_{(n+2),{\cal A}}$.
The first eigenvalue ${\lambda}_{1,S,(n)}({\cal A})$ is highlighted by
the special feature that it is
equal to the first simple eigenvalue ${\lambda}_{1,L,(n+2)}({\cal A})$
of the Laplace operator on ${\Omega}_{(n+2),{\cal A}}$ \, for any ${\cal A}\,\in\,[0,\infty)$.
This  could be understood as another tool for the proof, that
${\lambda}_{1,S,(n)}({\cal A})$ is
the first eigenvalue of the Stokes operator on ${\Omega}_{(n),{\cal
    A}}$. One can use this feature to explain  
the behaviour of ${\lambda}_{1,S,(n)}({\cal A})$,  ${\kappa}_{1,S,(n)}({\cal A})$ and $c_{p,S,(n)}({\cal A})$
for ${{\cal A}\,\to\,{\infty}}$.
\end{note} 
%%%%%%%%%%%%
\begin{remark} \label{PiQuadrat1}
We know for the one-dimensional Laplace problem that the first
eigenvalue ${\lambda}_{1}$ satiesfies ${\lambda}_{1}\,=\,{\pi}^2\,$.
This is also the idea behind the small gap limit in  ${\bb R}^{n}, n>1$. 
Namely, for the first simple eigenvalue it holds for ${\cal A}\,\to\,\infty$ that ${\lambda}_{1,L}({\cal A})\,\to\,{\pi}^2$. 
In particular, for all ${\cal A}\,\in\,(0,\infty)$ we use Subsection \ref{Sec2avBou} where in Eq. \eqref{eq:poincarepolar}
%%%%%%%%%
$\tilde w$ is varying in 
$ {\bb
  W}_{2}^{1}\hspace{-.62cm}{~}^{{~}^{{~}^{o}}}\hspace{.2cm}(\frac{\mathcal
  A}{2},1+\frac{\mathcal A}{2})$ to see that  (cf. \cite{nazarov2000})
\begin{equation*}
\label{hallos}
\left(
c_{p}({\cal A})
\right)^2 
=
\left(\frac{1}{\pi}\right)^2 = 
\max_{\tilde w } \frac{
\int_{\frac{\mathcal A}{2}}^{1+\frac{\mathcal A}{2}}
\abs{\tilde w(r)}^2 \, dr}
{
\int_{\frac{\mathcal A}{2}}^{1+\frac{\mathcal A}{2}}
\abs{\tilde w'(r)}^2 \, dr}\,.
\end{equation*}
\end{remark}
%%%%%%%%%%%%%%%%%%%%
\section{Investigation of the Limiting Cases \label{Inv_Lim}}
%%%%%%%%%%%%%%%%%%%%
Subsequently we will study the limiting cases ${\cal A}\,\to\,0$
and ${\cal A}\,\to\,{\infty}$  
separately, because the methods are completely different.
It is quite obvious that the cases ${\cal A}\,\to\,0$ and  ${\sigma}\,\to\,0$ are almost
identical and we formally obtain the punctured ball
\begin{align}
{\Omega}^{*}_{(n)\,\setminus \{{\underline{0}}\}} :=  
{\Omega}^{*}_{(n)\,\setminus \{{\underline{0}}\}} \quad\mbox{in the limits}\quad
 \lim_
{{\sigma}\,\to\,0}{\Omega}^{*}_{(n),\sigma}={\Omega}^{*}_{(n)\,\setminus
  \{{\underline{0}}\}}  \,,\quad 
 \lim_{{\cal A}\,\to\,0}{\Omega}_{(n),\cal A}={\Omega}^{*}_{(n)\,\setminus
  \{{\underline{0}}\}} \,,\quad 
{\Omega}^{*}_{(n)\,\setminus
  \{{\underline{0}}\}}  \simeq {\Omega}_{(n)}^{*}\,.
\end{align}
For simplicity we will choose the parameter ${\sigma}$ and may use
(\ref{calA}) to connect ${\cal A}$ and 
${\sigma}$ for the (first) eigenvalues. So we note, e.g., for the
smallest eigenvalue (the square of the smallest positive zero) by the  conversion formulas  \eqref{kappa_A_sigma} that: 
\begin{align}
\lambda_{1,L}({\sigma})\,=\,
(\kappa_{1,L}({\sigma}))^2\,=\,(({1+{\cal A}/{2}}) \kappa_{1,L}({\cal A}))^2\,=\,({1+{\cal A}/{2}})^2 \lambda_{1,L}({\cal A})\,.
\end{align}
% In our study of ${\cal A}\,\to\,{\infty}$
%The parameter ${\cal A}$ is replaced by the radius $R\,={\cal A}/2$
%for convenience.
Looking for the 
limit of 
${\Omega}_{(n),{\cal A}}$ as ${\cal A}\to \infty$ one easily sees, that
they are ``losing their curvature'' and in the limit become an
unbounded layer also known as the {\em small gap limit}
\[
{\Omega}_{(n),{\cal A}=\infty}\,:=
\{{(s,\tau_1,\dots,\tau_{n-1})^{T}} \in {\bb R}^{n}:\,0 < s < 1\}\,.
\]
It is worth to note,
that in the limiting process ${\cal A}\,\to\,{\infty}$ (or $R\,={\cal
  A}/2\to\infty$) the Laplace and 
Stokes operator lose the property of being operators with a pure point spectrum. 
But nevertheless, they stay linear positive operators and this ensures
the existence of smallest eigenvalues. 
%In contrast to our ideas for coordinate transformations in
%\cite{RuRuTh2016} we are going to employ considerations of the
%limiting value.
We will use 
the transcendental equations \eqref{Lapl_Allgem_EW} and \eqref{Stokes_Allgem_EW} 
for the square root of the eigenvalues 
in the limiting process at this point for
${\cal A}\,\to\,{\infty}$ (or $R\,={\cal A}/2\,\to\,{\infty}$). 
\subsection{The behaviour of the Laplace eigenvalues for
  $\boldsymbol{\sigma\,\to\,0}$  
\label{Green} }
We use the notations ${\boldsymbol L}_{\sigma}$ for the  Laplace operator on
${\Omega}^{*}_{(n),\sigma}$ and ${\boldsymbol L}_{\sigma}^{-1}$ for its inverse at ${\sigma}\,\in\,(0,1)$.
The properties of the first simple eigenvalue and the first eigenfunctions of ${\boldsymbol L}_{\sigma}$ 
on ${\Omega}^{*}_{(n),\sigma}$ are a simple Corollary of Theorem
\ref{thm_Lapl_n} using the conversion formula above.
%%%%%%%%%%%%%%%%%%%%%%%%%%%%%%%%%%%%%%%%%%%%%%%%%
\begin{corollary} \label{col_sigma_Lapl_n} %Let $n>3$ {\blue{(resp. $n>1$)}} and ${\Omega}^{*}_{(n),\sigma}\subset {\bb R}^{n}$.
For all $n\,\in\,{\bb N},\,n>3,
$ and any $\sigma\,\in\,(0,1)$ the first simple eigenvalue
${\lambda}_{1,L}({\sigma})$ of the Laplacian ${\boldsymbol L}_{\sigma}$  on
${\Omega}^{*}_{(n),\sigma}$
is the square of the smallest positive solutions
$\kappa_{1,L}({\sigma})$ 
of the following transcendental equations:  
\begin{align}\label{Lapl_sigma_GL_EW}
0\,=\,\left\{\begin{array}{ll}
J_{\frac{n}{2}-1}(\kappa_{L}({\sigma}))Y_{\frac{n}{2}-1}(\kappa_{L}({\sigma}) \sigma)
\,-\,J_{\frac{n}{2}-1}(\kappa_{L}({\sigma}) \sigma)Y_{\frac{n}{2}-1}(\kappa_{L}({\sigma})) 
& \forall \,
n\mbox{ even} \,\\[2mm]
%{~} & {~} \\
J_{\frac{n}{2}-1}(\kappa_{L}({\sigma}))J_{-\frac{n}{2}+1}(\kappa_{L}({\sigma}) \sigma)
\,-\,J_{\frac{n}{2}-1}(\kappa_{L}({\sigma}) \sigma)J_{-\frac{n}{2}+1}(\kappa_{L}({\sigma})) & \forall \,
n\mbox{ odd}\,.
\end{array}
\right.
\end{align}
The corresponding first eigenfunction is
\begin{align}\label{Lapl__sigma_Allgem_EF}
w_{{\;\!}1,L,\sigma}({\underline{x}})
\,=\,\frac{\tilde{c}_{1,L,\sigma}}{r^{\frac{n}{2}-1}}
\cdot
\left\{
\begin{array}{ll}
(
J_{\frac{n}{2}-1}(\kappa_{1,L}(\sigma)r)\,-\,
\frac{J_{{\frac{n}{2}}-1}({\textstyle{{\kappa_{1,L}(\sigma) \sigma}}})}
{Y_{{\frac{n}{2}}-1}({\textstyle{{ \kappa_{1,L}(\sigma) \sigma}}})}
Y_{\frac{n}{2}-1}( \kappa_{1,L}(\sigma) r
)
)& \forall \,
n\mbox{ even} \,,\\[2mm]
(J_{\frac{n}{2}-1}(\kappa_{1,L}(\sigma)r)\,-\,
\frac{J_{{\frac{n}{2}}-1}({\textstyle{{\kappa_{1,L}(\sigma) \sigma}}})}
{J_{-{\frac{n}{2}}+1}({\textstyle{\kappa_{1,L}(\sigma) \sigma}})}
J_{-\frac{n}{2}+1}( \kappa_{1,L}(\sigma) r
)
)&\forall \,
n\mbox{ odd} \,,
\end{array}
\right.
\end{align}
where again, 
$\tilde{c}_{1,L,\sigma}$ denotes the
scaling constants. %  in the ${{\bb L}}_{{2}}(.)$-sense.
The functions in \eqref{Lapl__sigma_Allgem_EF} only 
depend on $r\,\in\,(\sigma,1)$ (see Thm.~\ref{thm_Lapl_n}).
\end{corollary}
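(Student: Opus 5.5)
The plan is to derive Corollary \ref{col_sigma_Lapl_n} from Theorem \ref{thm_Lapl_n} by a dilation of the domain, using the conversion formula \eqref{kappa_A_sigma}. For $\sigma\in(0,1)$ set ${\cal A}:=2\sigma/(1-\sigma)$, so that $\sigma={\cal A}/({\cal A}+2)$ as in \eqref{calA}. Put $R:=1+{\cal A}/2=1/(1-\sigma)$ and consider the map $\Phi(\underline{x})=\underline{x}/R$. Since $({\cal A}/2)/R=\sigma$ and $(1+{\cal A}/2)/R=1$, $\Phi$ maps ${\Omega}_{(n),{\cal A}}$ bijectively onto ${\Omega}^{*}_{(n),\sigma}$. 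Composition with $\Phi^{-1}$ maps ${\bb W}_{2}^{1}\hspace{-.62cm}{~}^{{~}^{{~}^{o}}}\hspace{.2cm}({\Omega}_{(n),{\cal A}})$ isomorphically onto the corresponding space on ${\Omega}^{*}_{(n),\sigma}$, and the same holds for $D({\boldsymbol L})$.

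The key identity is $\Delta_{\underline{y}}(v\circ\Phi^{-1})=R^{2}(\Delta_{\underline{x}}v)\circ\Phi^{-1}$ for $\underline{y}=\Phi(\underline{x})$. Hence $v$ is an eigenfunction of ${\boldsymbol L}$ on ${\Omega}_{(n),{\cal A}}$ with eigenvalue $\lambda$ exactly when $v\circ\Phi^{-1}$ is an eigenfunction of ${\boldsymbol L}_{\sigma}$ with eigenvalue $R^{2}\lambda$. Because the map $\lambda\mapsto R^{2}\lambda$ preserves order, the first simple eigenvalue is mapped to the first simple eigenvalue, and simplicity carries over. This gives $\lambda_{1,L}(\sigma)=R^{2}\lambda_{1,L}({\cal A})$, that is $\kappa_{1,L}(\sigma)=(1+{\cal A}/2)\kappa_{1,L}({\cal A})$. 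Equivalently, one can argue through the Rayleigh quotient in \eqref{eq:poincarepolar}: under the dilation it scales by exactly $R^{-2}$.

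Next comes the transcendental equation. Substituting $\kappa_L({\cal A})=\kappa_L(\sigma)/R$ into \eqref{Lapl_Allgem_EW}, the arguments become
\[
\kappa_L({\cal A})(1+{\cal A}/2)=\kappa_L(\sigma), \qquad \kappa_L({\cal A}){\cal A}/2=\kappa_L(\sigma)\sigma .
\]
Thus \eqref{Lapl_Allgem_EW} turns term by term into \eqref{Lapl_sigma_GL_EW}, separately in the even case (with $Y_{\frac n2-1}$) and the odd case (with $J_{-\frac n2+1}$). Multiplication by $R$ is a monotone bijection between the positive roots of the two equations, so the smallest positive root corresponds to the smallest positive root.

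For the eigenfunction, I substitute $r=R\rho$ with $\rho\in(\sigma,1)$ into \eqref{Lapl_Allgem_EF}. This gives $\kappa_{1,L}({\cal A})r=\kappa_{1,L}(\sigma)\rho$, and the ratio coefficient becomes $J_{\frac n2-1}(\kappa_{1,L}(\sigma)\sigma)/Y_{\frac n2-1}(\kappa_{1,L}(\sigma)\sigma)$, respectively with $J_{-\frac n2+1}$ in the odd case. The prefactor satisfies $r^{-(\frac n2-1)}=R^{-(\frac n2-1)}\rho^{-(\frac n2-1)}$, and the constant $R^{-(\frac n2-1)}$ is absorbed into $\tilde c_{1,L,\sigma}$, which is then renormalised in ${\bb L}_2({\Omega}^{*}_{(n),\sigma})$. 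The result is \eqref{Lapl__sigma_Allgem_EF}.

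I expect no serious obstacle. The only points needing care are, first, that the denominators $Y_{\frac n2-1}(\kappa_{1,L}\sigma)$, respectively $J_{-\frac n2+1}(\kappa_{1,L}\sigma)$, do not vanish; this is inherited directly from Theorem \ref{thm_Lapl_n} via the identical arguments. Second, the ordering of eigenvalues must be preserved, which holds because the scaling factor $R^{2}$ is positive.
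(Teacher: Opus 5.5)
Your proposal is correct and follows the paper's route. The paper simply declares the corollary a direct consequence of Theorem~\ref{thm_Lapl_n} via the conversion formula \eqref{kappa_A_sigma}, $\kappa_{1,L}(\sigma)=(1+{\cal A}/2)\,\kappa_{1,L}({\cal A})$, and your dilation $\underline{x}\mapsto \underline{x}/(1+{\cal A}/2)$, under which $\Delta$ scales by $(1+{\cal A}/2)^2$, is exactly the justification of that formula that the paper leaves implicit.

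One minor correction: the denominators $Y_{\frac n2-1}(\kappa_{1,L}(\sigma)\sigma)$ resp.\ $J_{-\frac n2+1}(\kappa_{1,L}(\sigma)\sigma)$ do vanish for certain $\sigma$, because $\kappa_{1,L}(\sigma)\sigma$ depends continuously on $\sigma$ and runs from $0$ to $\infty$. At those $\sigma$ the eigenfunction formula, in Theorem~\ref{thm_Lapl_n} as much as in the corollary, must be read in the undivided form $Y(\kappa\sigma)J(\kappa r)-J(\kappa\sigma)Y(\kappa r)$. This defect is inherited from the theorem and does not affect your transfer argument.
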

\noindent Additionally we denote the Laplace 
operator on the unit ball
${\Omega}^{*}_{(n)}$ by ${\boldsymbol L}_{o}$ and its inverse 
${\boldsymbol L}_{o}^{-1}$, respectively.
As in \cite{RuRuTh2016} we use the Green's function to study the limit
$\sigma\to0$. The case $n=2$ is treated in
\cite[Subsec.3.1]{RuRuTh2016} and thus, we restrict ourselves to the
case $n\ge 3$, $n\in{\bb N}$.
% For $n=2$ are the Green's
                                %functions given in \cite{RuRuTh2016} cf. subsection 3.1. We make the commitment that we regard now the dimensions $n\geq 3,\, n\in{\bb N}$.\\ 
First, we us use the Green's functions for the
(negative) Laplacians to define the inverse operators. For this
purpose let 
${\underline{x}}$ and ${\underline{y}}$  be two
points in 
${\overline{{\Omega}}}_{(n)}^{{\;\!}*}$. 
Then Green's function for the
(negative) Laplacian on ${\Omega}^{*}_{(n)}$ with zero trace at fixed $n\geq 3$ is given by (cf.eg. \cite{CouHil}):
\begin{equation*}
G_{(n)}({\underline{x}},{\underline{y}})=
\frac{1}{(n-2){|\omega}_{(n)}|}\left(\frac{\displaystyle{1}}{{\displaystyle{
\|{\underline{x}}-{\underline{y}}\|}}^{n-2}}\,-\,
\frac{\displaystyle{1}} 
{\displaystyle{(\|{\underline{x}}\|^{2}
\|{\underline{y}}\|^{2}+1-2\|{\underline{x}}\|
\|{\underline{y}}\|\cos({\underline{x}},{\underline{y}}))^{\frac{n-2}{2}}}}\,\right).
\end{equation*} 
The symmetric form of the Green's function for the (negative)
Laplacian with vanishing traces on $\partial{\Omega}^{*}_{(n),\sigma}=
\omega_{_{(n)},\sigma} \cup \omega_{(n)}$ on 
every ${\Omega}_{(n),\sigma}^{*}$ (for fixed $\sigma\,>\,0$) is (cf. \cite{GilTru})
\begin{eqnarray}\label{sternstern}
  % &&\hspace*{-16mm}
       G_{(n),\sigma}({\underline{x}},{\underline{y}})&=&\frac{1}{(n-2)|{\omega}_{(n)}| } \sum\limits_{k\in\mathbb{Z}}
\bigg( \frac{\sigma^{k(n-2)}}{ \| {\underline{y}}-\sigma^{2k}{\underline{x}} \|^{n-2}}
-\frac{\sigma^{k(n-2)}}{\left\|\,  \| {\underline{x}} \|  {\underline{y}} -\frac{{\displaystyle{\sigma^{2k}}}}
{{\displaystyle{ \| {\underline{x}} \|}}} {\underline{x}} 
\right\|^{n-2}}
  \bigg) %\,G_{(n)}({\underline{x}},{\underline{y}})\,+\quad
  % \nonumber
  \\
&=& G_{(n)}({\underline{x}},{\underline{y}})\,+\frac{1}{(n-2)|{\omega}_{(n)}| }\sum_{k\in {\bb N}}{{\sigma}^{k(n-2)}}\bigg(\frac{1}{\displaystyle{\|{\underline{y}}-{\sigma}^{2k}{\underline{x}}\|^{n-2}}}
+\frac{1}{\displaystyle{\|{\sigma}^{2k}{\underline{y}}-{\underline{x}}\|^{n-2}}}+
\hspace*{2cm}\quad\quad\nonumber\\ 
&&\frac{-1}
{\displaystyle{(\|{\underline{x}}\|^{2}
\|{\underline{y}}\|^{2}\!\!+{\sigma}^{4k}\!\!-\!\! 2{\sigma}^{2k}\|{\underline{x}}\|
\|{\underline{y}}\| 
\cos({\underline{x}},{\underline{y}}))^{\frac{n}{2}-1}}}+\frac{-1}
{\displaystyle{({\sigma}^{4k}\|{\underline{x}}\|^{2}
\|{\underline{y}}\|^{2}\!\!+\!\! 1 \!\!-\!\! 2{\sigma}^{2k}\|{\underline{x}}\|
\|{\underline{y}}\| 
\cos({\underline{x}},{\underline{y}}))^{\frac{n}{2}-1}}}\bigg)\,\,,
   \nonumber
\end{eqnarray}
where the points ${\underline{x}}$ and ${\underline{y}}$ have to be in
${\overline{{\Omega}}}^{{\;\!}*}_{(n),\sigma}$. This shape of the
Green's functions is also useful for the study 
of ${\sigma}\,\to\,0$. By a straightforward calculation we get for the
limits for any $n\,\in\,{\bb N}:\, n\geq 3$ that
\begin{align}
\label{Lapl_GWsig}
\lim_{{\sigma}\,\to\,0}{\;\!}
  G_{(n),\sigma}({\underline{x}},{\underline{y}})=G_{(n)}({\underline{x}},{\underline{y}})
  \, .
\end{align}
%where the simple calculations and arguments are more or less the same as in \cite{RuRuTh2025}.
The basic properties of the eigenfunctions of  ${\boldsymbol L}_{o}$  
and Bessel's differential operator can be found in \cite[Chapters 5 and 8]{Triebel} together with the explicit 
representation of the inverse Laplacian by  Fourier series in 
the eigenfunctions. \\[.1cm]
The first eigenfunctions of the Laplacian ${\boldsymbol L}_{o}$
on the unit ball
${\Omega}_{(n)}^{*}\subset {\bb R}^{n}$ are written in formula
\eqref{Lapl_einheitskugel_EF} as
\begin{align*}
w_{{\;\!}1,L,o}({\underline{x}})\,=\,
{{w}}_{{\;\!}1,L}({\underline{x}})
\,=\,\frac{\tilde{c}_{1,L}}{r^{\frac{n}{2}-1}}
\cdot
J_{\frac{n}{2}-1}(\kappa_{1,L,(n)}({o})\cdot r)\,,
\end{align*}
where the constants
$\tilde{c}_{1,L}$ are the scaling constants. % in the ${{\bb L}}_{{2}}(.)$-sense.
The corresponding first simple eigenvalues $\lambda_{1,L,(n)}({o})$ are the squares of the smallest 
positive roots $\kappa_{L}\,=\,\kappa_{1,L,(n)}({o})$ of
\begin{align*}
J_{\frac{n}{2}-1}(\kappa_{L})=0 \quad \quad \quad \quad \,\forall \,n\,\in\,{\bb N}:\,n>1\,\,\,:\,\,\,\lambda_{1,L,(n)}({o})\,=\,(\kappa_{1,L,(n)}({o}))^{2}\,\,.
\end{align*}
Numerical computations show that we %have in ascending order of the
% dimension
for $n\,= \,2, \,3 ,\,4 ,\,5,\,6,\,7\dots$
by way of example  $\lambda_{1,L,(2)}({o})\,\approx\,(2.404825558)^2$,\,\, 
$\lambda_{1,L,(3)}({o})\,=\,\pi^2$,\,\,$\lambda_{1,L,(4)} ({o})\,\approx\,(3.831705970)^2$,\,\,
$\lambda_{1,L,(5)}({o})\,\approx\,(4.493409458)^2$,\,\,$\lambda_{1,L,(6)}({o})\,\approx\,(5.135622302)^2$,\\
$\lambda_{1,L,(7)}({o})\,\approx\,(5.763459197)^2$,\,\,$\dots$\,\,.\\
The operators ${\boldsymbol L}_{o}^{-1}$ and ${\boldsymbol L}_{\sigma}^{-1}$ (cf. Corollary  \ref{col_sigma_Lapl_n} for the 
Laplacian ${\boldsymbol L}_{\sigma}$ on ${\Omega}^{*}_{(n),\sigma}\subset {\bb R}^{n}$)
are selfadjoint, positive and 
compact. Thus, we observe
\begin{align}
\label{neu_n_Lapl}
     (\lambda_{1,L,(n)}({o}))^{-1}&= \max_{u \in {\bb
        L}_{2}({\Omega}_{(n)}^{*}): \| u \|_{{\bb L}_{2}(.)}=1 }{\;\!}
    \int_ {{\Omega}^{*}_{(n){~}}} \int_ {{\Omega}^{*}_{(n){~}}}
    G_{(n)}({\underline{x}},{\underline{y}})u({\underline{y}})
    u({\underline{x}})d{\underline{y}}d{\underline{x}}=\|{\bf
      L}_{o}^{-1}\|
    % \quad\quad{\mbox{and}}
    \\
 (\lambda_{1,L,(n)}({\sigma}))^{-1}&=
    \max_{u \in {\bb L}_{2}({\Omega}^{*}_{(n),\sigma}): \| u \|_{{\bb
          L}_{2}(.)}=1 }{\;\!}  \int_ {{\Omega}^{*}_{(n),\sigma}} \int_
    {{\Omega}^{*}_{(n),\sigma}}
    G_{(n),\sigma}({\underline{x}},{\underline{y}})
    u({\underline{y}})u({\underline{x}})d{\underline{y}}d{\underline{x}}=\|{\bf
      L}_{\sigma}^{-1}\|\,\nonumber,
\end{align}
where the maxima are attained at $u=w_{1,L,o}({\underline{x}})$ and at
$u=w_{1,L,{\sigma}}({\underline{x}})$, 
respectively. The equations above immediately show the behaviour of the Laplace eigenvalues for
${\sigma\,\to\,0}$ too. We also apply an important classical 
tool for partial differential equations \cite{CouHil}. %We use the
                                %notations
                                %$(\lambda_{1,L,(n)}({o}))^{-1}=\|{\bf
                                %L}_{o}^{-1}\|\quad \mbox{and}\quad (\lambda_{1,L,(n)}({\sigma}))^{-1}=\|{\bf L}_{\sigma}^{-1}\|\, $ again to describe in detail the central ideas.
For ${\sigma}\in\,(0,1)$ and $ {\sigma}_{o}\geq {\sigma}$ it is obvious, that
${\Omega}_{(n),{\sigma}_{o}}^{*}\subset {\Omega}_{(n),{\sigma}}^{*}\subset 
{\Omega}^{*}_{(n)}$. Thus, the values
$(\lambda_{1,L,(n)}({\sigma}))^{-1}$ constitute a continuous, 
monotonically decreasing function 
with respect to ${\sigma}$ (applying
\cite[Satz 3 on page 355 in Bd. 1]{CouHil}) for every fixed
$n\,\in\,{\bb N}:\,n>1$ and \cite{Weid}.	
In addition we see 
for any fixed ${\sigma}_{o}<1$ that
\begin{equation}\label{stern}
(\lambda_{1,L,(n)}(\sigma_{o}))^{-1} \leq  (\lambda_{1,L,(n)}({\sigma}))^{-1}
\leq (\lambda_{1,L,(n)}({o}))^{-1}
\quad
\mbox{for}\quad {\sigma}\in\,(0,{\sigma}_{o})\,. 
\end{equation}
We use the {\em constrained
  subsets} of ${\bb L}_{2}({\Omega}^{*}_{(n)})$ as a second tool for
our proof. 
Let us introduce the space
\[
{\bb L}_{2}({\Omega}^{*}_{{(n)},\backslash {\sigma}}):=\{v\,\in\,{\bb
  L}_{2}({\Omega}^{*}_{(n)}): v=0\,\, {\mbox{a.e. on }} {\Omega}^{*}_{(n)}
\setminus {\Omega}^{*}_{(n),\sigma}\}
\]
of almost everywhere vanishing functions for 
${\underline{x}} \in {\bb R}^{n}$ with $\|{\underline{x}}\|\,<{\sigma}$. It
is obvious, that 
${\bb L}_{2}({\Omega}^{*}_{(n),\backslash {\sigma}} $$)\,
\simeq\, {\bb
  L}_{2}({\Omega}^{*}_{(n),\sigma}$$)$ 
in the sense of an identity map.
We apply the standard property of maxima on subsets combined with the
central tool of the limiting process  
(\ref{Lapl_GWsig}) to see, that there exists 
\[ 
\lim_{{\sigma}\,\to\,0}{\;\!}  (\lambda_{1,L,(n)}({\sigma}))^{-1}
\leq  (\lambda_{1,L,(n)}({o}))^{-1}
\]
and that $\lambda_{1,L,(n)}({\sigma})$ is continuous in $0$ with the value of the function $\lambda_{1,L,(n)}({0})\,=\,
\lambda_{1,L,(n)}({o})$
(see \cite{Weid}). 
Using the definitions of $G_{(n)} $
and $G_{(n),\sigma}$ and that for every $u\,\in$
${\bb L}_{2}({\Omega}^{*}_{(n)\,\setminus \{{\underline{0}}\}})$ we also
have $u \in {\bb L}_{2}({\Omega}^{*}_{(n)} )$ we conclude
\begin{align}\label{neu1}
\int_{ {\Omega}^{*}_{(n){~}}}\int_ {{\Omega}^{*}_{(n){~}}} G_{(n)}({\underline{x}},{\underline{y}})
u({\underline{y}})u({\underline{x}})d{\underline{y}}d{\underline{x}}=
\int_{{\Omega}^{*}_{(n)\,\setminus \{{\underline{0}}\}}}\int_{{\Omega}^{*}_{(n)\,\setminus \{{\underline{0}}\}}}
\lim_{{\sigma}\,\to\,0} G_{(n),\sigma}({\underline{x}},{\underline{y}}) 
u({\underline{y}})u({\underline{x}})d{\underline{y}}d{\underline{x}}\quad\forall
  u \in {\bb L}_{2}({\Omega}^{*}_{(n)} )\,\,\,. 
\end{align}
From \eqref{neu_n_Lapl}, \eqref{neu1} and the continuity of $\lambda_{1,L,(n)}({\sigma})$ in $\sigma = 0 (=o)$ 
we may conclude that 
\begin{align*}
&\max_{u \in {\bb L}_{2}({\Omega}^{*}_{{(n)}}):   \| u \|_{{\bb L}_{2}(.)}=1  }{\;\!}
\int_{{\Omega}^{*}_{(n)\,\setminus \{{\underline{0}}\}}}\int_{{\Omega}^{*}_{(n)\,\setminus \{{\underline{0}}\}}}
\lim_{{\sigma}\,\to\,0} G_{(n), \sigma}({\underline{x}},{\underline{y}}))
u({\underline{y}})u({\underline{x}})d{\underline{y}}d{\underline{x}}
                           \\ 
=&\max_{u \in {\bb L}_{2}({\Omega}^{*}_{(n)}):   \| u \|_{{\bb L}_{2}(.)}=1  }{\;\!}
\int_{ {\Omega}^{*}_{(n)}}\int_ {{\Omega}^{*}_{(n)}} G_{(n)}({\underline{x}},{\underline{y}})
u({\underline{y}})u({\underline{x}})d{\underline{y}}d{\underline{x}}\quad
=\,(\lambda_{1,L,(n)}({0}))^{-1}\,=\,(\lambda_{1,L,(n)}({o}))^{-1}\,=\,\\
  =&\,\,\lim_{{\sigma}\,\to\,0} (\lambda_{1,L,(n)}({\sigma}))^{-1}=
\lim_{{\sigma}\,\to\,0}
\Big( \max_{u \in {\bb L}_{2}({\Omega}^{*}_{(n),\sigma}):   \| u \|_{{\bb L}_{2}(.)}=1  }{\;\!}
\int_ {{\Omega}^{*}_{(n),\sigma}}   \int_ {{\Omega}^{*}_{(n),\sigma}}  (
G_{(n),\sigma}({\underline{x}},{\underline{y}}))
u({\underline{y}})u({\underline{x}})d{\underline{y}}d{\underline{x}}
\Big).
\end{align*}
%For the third line we used the relations \eqref{sternstern},
%\eqref{stern}
%and monotone convergence.\label{Lapl__sigma_Allgem_EF}
We note, that also the limit in the eigenfunctions 
\begin{align*}
w_{1,L,o}({\underline{x}})=\lim_{{\sigma}\,\to\,0}{\;\!}w_{1,L,{\sigma}}({\underline{x}})
\end{align*}
is well-defined in the sense of almost uniform convergence. Here
$w_{{\;\!}1,L,\sigma}({\underline{x}})$ is regarded as the continuous 
extension of $w_{{\;\!}1,L,\sigma}({\underline{x}})$ on
${\overline{{\Omega}}}^{{\;\!}*}_{(n)}$ by
zero in $\,{\overline{{\Omega}}}^{{\;\!}*}_{(n)}
\setminus{{{\Omega}}}^{{\;\!}*}_{(n),\sigma} $. 
An essential aspect of our study is, that the behaviour of the Green's functions for the
(negative) Laplacians in the process ${\sigma\,\to\,0}$ may be seen as 
forgetting the point ${\underline{0}}$ (with its zero boundary
conditions) in the limit.   
%%%%%%%%%%%%
\subsection{The behaviour of the first Stokes eigenvalues for
  $\boldsymbol{\sigma\,\to\,0}$ \label{StoJ1Y1} } 
%%%%%%%% 
We denote the  Stokes operators on
 ${\Omega}^{*}_{(n),\sigma}$ by ${\bf S}_{\sigma}$ (cf. Remark \ref{rm_shift_Lapl_Stokes_sigma_n})
and the  one
on the unit ball ${\Omega}^{*}_{(n)}$  by ${\bf S}_{o}$ (as in Subsection~\ref{sec_sto_n}) 
$\forall\, n\,\in\,{\bb N} : n >1 $.
The elements of the family of Stokes operators
$\{{\bf S}_{\sigma}\}_{\sigma\,\in\,(0,1)}$ are selfadjoint and positive just as ${\bf S}_{o}$.\\
For the first eigenvalues $\lambda_{1,S,(n)}({o})$ and
$\lambda_{1,S,(n)}({\sigma})$ (counted in the respective fix
multiplicity for ${\sigma}\in\,[0,1)$  and  $n >1 $) we find that:
 \begin{align}
\label{Stok_Eig}
\lambda_{1,S,{(n)}}({o})=
\min_{{\underline{u}} \in {\underline{\bb S}}_{{\;\!}}^{2}({\Omega}^{*}_{(n)}):
\| {\underline{u}} \|_{{\underline{\bb H}}_{{\;\!}}(.)=1  }} ({\bf
   S}_{o}{\underline{u}},{\underline{u}})_{{\underline{\bb H}}_{{\;\!}}(.) }
\quad 
{\mbox{and}}\quad%\nonumber\\
\lambda_{1,S,{(n)}}({\sigma})=
\min_{{\underline{u}} \in {\underline{\bb S}}_{{\;\!}}^{2}({\Omega}^{*}_{{(n)},{\sigma}}):
\| {\underline{u}} \|_{{\underline{\bb H}}_{{\;\!}}(.)=1  }} 
({\bf   S}_{{\sigma}} {\underline{u}},{\underline{u}})_{{\underline{\bb H}}_{{\;\!}}(.) }
\,, 
\end{align}
where the minima are attained at ${\underline{u}}\,=\,{\underline{w}}_{{\;\!}\varphi,S,{\sigma}}$
(cf. Eq. 
\eqref{Stokes_Allgem_EF_sigma}), 
respectively, in ${\underline{u}}\,=\,{\underline{w}}_{{\;\!}\varphi,S}$ 
(cf. Eq. \eqref{Stokes_Allgem_Einsball_EF}).
The eigenvalues $\lambda_{1,S,(n)}({\sigma})$ constitute a continuous, monotonically
increasing function of ${\sigma}\in\,(0,1)$
for any fixed 
$n>1\,;n \in {\bb N}$. This is due to \cite[Satz 3 on p.~355 in Bd.~1]{CouHil}
and the inclusions  
${\Omega}_{{(n)},{\sigma}_{o}}^{*}\subset {\Omega}_{{(n)},{\sigma}}^{*}\subset 
{\Omega}^{*}_{(n)}$ for any $ {\sigma}_{o}\geq {\sigma}$ again
together with and \cite{Weid}.
%%%%%%%%%%%%%%%%%%%%%%%%%%%%%%%%%%%%%%%%%%%%%%%%%%%%%
We observe 
$\lambda_{1,S,(n)}({\sigma}_{o})\geq  \lambda_{1,S,(n)}({\sigma}) \geq
\lambda_{1,S,(n)}({o})$ like for the Laplacian 
at ${\sigma}\in\,(0,{\sigma}_{o})$, for any fixed ${\sigma}_{o}<1$. 
With similar arguments as in Subsection \ref{Green} we
see that the right-hand limit  
of the first Stokes eigenvalues attains the unique value
 \begin{align}\label{eWSlimsigma_0}
  (\kappa_{1,S,(n)}({o}))^{2}\,:= \,
 {\lambda_{1,S,(n)}({o})}=\lim_{{\sigma}\,\to\,0}{\;\!}
   {\lambda_{1,S,(n)}({\sigma})}\,:= \, 
\lim_{{\sigma}\,\to\,0}{\;\!}  \big( {\kappa_{1,S,(n)}({\sigma})}\big)^{2}\,.
 \end{align}
 There one achieves this result by Eq. (\ref{Stok_Eig}).
 \begin{remark} One shows Eq. \eqref{eWSlimsigma_0} by applying
   the shift properties  
 from Theorem \ref{thm_shift_Lapl_Stokes_n}
 and from Remark 
 \ref{rm_shift_Lapl_Stokes_sigma_n} as well.
 \end{remark} 
 %%%%%%%%%%%%%
 % \begin{remark} The use of vector fields in 
%  the Cartesian coordinate system ${\underline{u}}\,=\,{\underline{u}}_{\mathfrak{c}}\in\,{\underline{\bb S}}_{{\;\!}}^{2}(({\Omega}^{*}_{{(n)},{\sigma}})$
% resp.${\underline{u}}\,=\,{\underline{u}}_{\mathfrak{c}}\in\,
% {\underline{\bb S}}_{{\;\!}}^{2}(({\Omega}^{*}_{{(n)}})$ in \eqref{Stok_Eig} result in components of the Laplacian form for 
% $({\bf   S}_{{\sigma}} {\underline{u}},{\underline{u}})_{{\underline{\bb H}}_{{\;\!}}(.) }$ resp.
% $({\bf   S}_{o} {\underline{u}},{\underline{u}})_{{\underline{\bb H}}_{{\;\!}}(.) }$, where
% there are permitted in the variation only vector functions which possess surface harmonic 
% functions at least of a degree $\ell \geq 1 $ in respect to
%  Corollary \ref{Stokes_sph_harm_l=1}.
% \end{remark}
\noindent Like in Subsection~\ref{Green} we use {\em constrained subsets} of 
${\underline{\bb H}}_{{\;\!}}({\Omega}^{*}_{(n)})$ as an appropriate
tool for the limit at ${\sigma}\,\to\,0$.
We define the space of almost everywhere vanishing vector functions in 
${\underline{x}} \in {\bb R}^{n}:\|{\underline{x}}\|\,<{\sigma}$: 
\[{\underline{\bb H}}_{{\;\!}}({\Omega}^{*}_{{(n)},\backslash {\sigma}}):=\{\underline{v}\,\in\,{\underline{\bb H}}_{{\;\!}}({\Omega}^{*}): \underline{v}=\underline{0}\,\,
{\mbox{a.e. on }} {\Omega}^{*}_{(n)}
\setminus {\Omega}^{*}_{(n),\sigma}\}\,.
\]
We see that 
${\underline{\bb H}}_{{\;\!}}({\Omega}^{*}_{{(n)},\backslash {\sigma}})\simeq {\underline{\bb H}}_{{\;\!}}({\Omega}^{*}_{(n),\sigma})$ 
in the sense of an identity map (cf. Subsection~\ref{Green} too).
The space ${\underline{\bb
    H}}_{{\;\!}}({\Omega}^{*}_{(n),\setminus \{{\underline{0}}\}})$ is additionally
equivalent to 
${\underline{\bb H}}_{{\;\!}}({\Omega}^{*}_{(n)})$ , where
${\Omega}^{*}_{(n),\setminus \{{\underline{0}}\}}$ was defined in
Section~\ref{Inv_Lim}.\\
In view of these results now we are in the same situation as in
\cite{RuRuTh2016,RuRuTh2025},  where the cases $n=2$ and $n=3$ were
treated.
Thus, we may conclude with the same methods and estimates as in \cite{RuRuTh2016} and
\cite{RuRuTh2025} that 
$\lim_{{\sigma}\,\to\,0}{\;\!}
{\underline{w}}_{{\;\!}\varphi,S,{\sigma}}({\underline{x}})=
{\underline{w}}_{{\;\!}\varphi,S,o}({\underline{x}})\,=\,{\underline{w}}_{{\;\!}\varphi,S}({\underline{x}})$ 
in the sense of pointwise convergence on
${\overline{{\Omega}}}^{{\;\!}*}_{(n)}$ and in the sense 
of almost uniform convergence too due to the obvious right-hand
continuity of $\tilde{c}_{1,S,{\sigma}}$.
% Now, we have made all the preparations to refer our results in \cite{RuRuTh2016} and
% \cite{RuRuTh2025}. 
% We focus our investigations on the limit ${{\sigma}\,\to\,0}$ again
% regarding the functions ${\underline{w}}_{{\;\!}\varphi,S,{\sigma}}(\underline{x})$
% (cf. Corollary \ref{col_thm_sigma_Stokes_n}, 
% \eqref{Stokes_Allgem_EF_sigma})
% and ${\underline{w}}_{{\;\!}\varphi,S}(\underline{x})$ 
% (cf. Corollary  \ref{Stokes_Unit_ball}, \eqref{Stokes_Allgem_Einsball_EF}).
The functions
${\underline{w}}_{{\;\!}\varphi,S,{\sigma}}(\underline{x})$ are also
considered as its  
continuous extension onto
${\overline{{\Omega}}}^{{\;\!}*}_{(n)}$ by  $\underline{0}$
(the zero vector) in 
${\overline{{\Omega}}}^{{\;\!}*}_{(n)}
\setminus{{{\Omega}}}^{{\;\!}*}_{{(n)},\sigma} $ 
if necessary.\\ 

%%%%%%
\subsection{The behaviour of the first Laplace and Stokes 
eigenvalues for 
$\boldsymbol{{\cal A}\,\to\,{\infty}}$ \label{LaStoInf} } 
%%%%%%%%%%%
We investigate the transcendental equations for the first eigenvalues 
of the Laplace and the
Stokes operators as ${\cal A}\,\to\,{\infty}$ directly like in \cite{RuRuTh2025}. 
The successful use of a coordinate transformation like in \cite{RuRuTh2016} is unfortunately
resticted to $n=2$. But the transformation  
\begin{align} \label{R_und_A}
r\,:=\,R+s \quad \text{for} \quad s\,\in\,[0,1] \quad,\,\, \text{with}\quad
R\,:=\,\frac{\mathcal A}{2}\,
\end{align}
is advantageous for our studies.
The so-called
small-gap limit is more or less
to establish a relationship between the first eigenvalues 
of the Laplace operator (resp. the
Stokes operator) and the one-dimensional Laplace eigenvalue problem as ${\cal A}\,\to\,{\infty}$ 
\cite[Prop.~1.1]{nazarov2000}, where 
$\tilde w$ varies in $ {\bb
  W}_{2}^{1}\hspace{-.62cm}{~}^{{~}^{{~}^{o}}}\hspace{.2cm}(\frac{\mathcal
  A}{2},1+\frac{\mathcal A}{2})
{{ \,=\,{\bb
  W}_{2}^{1}\hspace{-.62cm}{~}^{{~}^{{~}^{o}}}\hspace{.2cm}(R,R+1)}}\,$ and 
\begin{equation}
\label{eq:1DEigenwert}
\hspace*{1.2cm}
{\pi}^2 = 
\min_{\tilde w } \frac{
\int_{\frac{\mathcal A}{2}}^{1+\frac{\mathcal A}{2}}
\abs{\tilde w'(r)}^2 \, dr}
{
\int_{\frac{\mathcal A}{2}}^{1+\frac{\mathcal A}{2}}
\abs{\tilde w(r)}^2 \, dr} {{\,=\,
\min_{\tilde w } \frac{
\int_{R}^{R+1}
\abs{\tilde w'(r)}^2 \, dr}
{
\int_{R}^{R+1}
\abs{\tilde w(r)}^2 \, dr}}}
\,.
\end{equation}
%We are going to bring out the importance of the number ${\pi}^2$ in the following
\begin{prop}%28
\label{PiQuadratinf}
For the first simple eigenvalue ${\lambda}_{1,L,(n)}({\cal A})$ of the 
Laplacian ${\boldsymbol L}$ and for the first eigenvalue 
${\lambda}_{1,S,(n)}({\cal A})$ of the Stokes operator ${\boldsymbol
  S}$ we obtain, that\,
%(cf. Definition \ref{D4})
$\lim_{{\cal A}\to\infty} {\lambda}_{1,L,(n)}({\cal A})\,=\,{\pi}^2$ and \,$\lim_{{\cal A}\to\infty} {\lambda}_{1,S,(n)}({\cal A})\,=\,{\pi}^2$ .
\end{prop}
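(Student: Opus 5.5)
Via Theorem \ref{thm_shift_Lapl_Stokes_n}, ${\lambda}_{1,S,(n)}({\cal A})={\lambda}_{1,L,(n+2)}({\cal A})$ for every ${\cal A}$. It therefore suffices to prove $\lim_{{\cal A}\to\infty}{\lambda}_{1,L,(m)}({\cal A})=\pi^2$ for every dimension $m>1$. Applying this with $m=n$ gives the Laplace statement, and applying it with $m=n+2$ gives the Stokes statement. I will work with the Rayleigh quotient, not the Bessel cross-product equations \eqref{Lapl_Allgem_EW}, and use the substitution \eqref{R_und_A}, $r=R+s$ with $R={\cal A}/2$ and $s\in(0,1)$.

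\textbf{Reduction to a weighted one-dimensional problem.} By Theorem \ref{thm_Lapl_n} the first eigenfunction is radial, as already used in \eqref{eq:poincarepolar}. Hence
\begin{equation*}
{\lambda}_{1,L,(m)}({\cal A})=\min_{\tilde w}\frac{\int_0^1 (R+s)^{m-1}|\tilde w'(s)|^2\,ds}{\int_0^1 (R+s)^{m-1}|\tilde w(s)|^2\,ds},
\end{equation*}
with $\tilde w$ ranging over ${\bb W}_{2}^{1}\hspace{-.62cm}{~}^{{~}^{{~}^{o}}}\hspace{.2cm}(0,1)$, after translating the interval $(R,R+1)$ to $(0,1)$.

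\textbf{Two-sided bound.} On $[0,1]$ the weight satisfies $R^{m-1}\le (R+s)^{m-1}\le (R+1)^{m-1}$. Bounding the numerator from below and the denominator from above, and then the reverse, yields
\begin{equation*}
\Big(\frac{R}{R+1}\Big)^{m-1}\pi^2\;\le\;{\lambda}_{1,L,(m)}({\cal A})\;\le\;\Big(\frac{R+1}{R}\Big)^{m-1}\pi^2 .
\end{equation*}
The unweighted minimum here is exactly the one-dimensional value \eqref{eq:1DEigenwert}, namely $\pi^2$. The lower bound is the content of \eqref{eq:bound_new_n}, and the upper bound follows by inserting the test function $\sin(\pi s)$ into the quotient. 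Since $(R+1)/R=1+2/{\cal A}\to1$, both bounds tend to $\pi^2$, which proves the claim for ${\lambda}_{1,L,(m)}$.

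\textbf{Main obstacle.} The only delicate point is justifying that the minimum of the Rayleigh quotient is attained on radial functions, so that the $n$-dimensional problem collapses to the weighted interval problem. For the Laplacian this is given by Theorem \ref{thm_Lapl_n}. For the Stokes operator there is no radial reduction, because the eigenfunction carries an $\ell=1$ harmonic by Corollary \ref{Stokes_sph_harm_l=1}. This is exactly why I route the Stokes case through the shift Theorem \ref{thm_shift_Lapl_Stokes_n} and do not estimate ${\lambda}_{1,S}$ directly.

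If one wanted to avoid the shift theorem, one could instead insert the $\ell=1$ ansatz $\varrho(r)S^{\{1\}}$ from Theorem \ref{thm_Stokes_n}. The extra term $\ell(\ell+n-2)/r^2\le (n-1)/R^2$ in the radial operator is $O({\cal A}^{-2})$, so the same sandwich argument with this additional vanishing term would still give the limit $\pi^2$.
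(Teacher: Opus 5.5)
Your proposal is correct and is essentially the paper's own proof. The Stokes case is transferred to the Laplacian on ${\Omega}_{(n+2),{\cal A}}$ via Theorem~\ref{thm_shift_Lapl_Stokes_n}. The Laplace eigenvalue is then squeezed between $\big(\frac{R}{R+1}\big)^{n-1}\pi^2$ and $\big(\frac{R+1}{R}\big)^{n-1}\pi^2$ by bounding the weight $r^{n-1}$ on $(R,R+1)$ and using the one-dimensional value \eqref{eq:1DEigenwert}; this is exactly the square of \eqref{eq:poincareR_inequ}.

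Your side route for Stokes via the $\ell=1$ ansatz is also sound. Its lower bound is most simply obtained from ${\lambda}_{1,S,(n)}({\cal A})\geq{\lambda}_{1,L,(n)}({\cal A})$, since each component of a field in ${\underline{\bb V}}$ is an admissible scalar test function. That route therefore does not even need the shift theorem.
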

%%%
% \input{Hallo_kappa.tex}
\begin{proof} {{
The first simple eigenvalue ${\lambda}_{1,L,(n)}({\cal A})$ of the 
Laplace operator ${\boldsymbol L}$ and the first eigenvalue 
${\lambda}_{1,S,(n)}({\cal A})$ of the Stokes operator ${\boldsymbol
  S}$ are the squares of the smallest positive solution 
$\kappa_{1,L,(n)}({\cal A})$ resp. $\kappa_{1,S,(n)}({\cal A})$
(cf. Theorem \ref{thm_Lapl_n} and  Theorem \ref{thm_Stokes_n}) of the
transcendental equations \eqref{Lapl_Allgem_EW} and
\eqref{Stokes_Allgem_EW}, respectively, 
for arbitrary ${\cal A}\,\to\,\infty$, resp. $R\,\to\,\infty$.
Due to the shift Theorem \ref{thm_shift_Lapl_Stokes_n} 
we have the same first roots $\kappa_{1,S,(n)}({\cal
  A})\,=\kappa_{1,L,(n+2)}({\cal A})$ %of the transcendental equations
for all $n>1$. We introduce the abbreviation $\kappa\,:=\,\kappa_{1,L,(n)}({\cal A})$
and with $R\,=\,\frac{\cal A}{2}\,\in\,(0,\infty)$  write 
the equations \eqref{Lapl_Allgem_EW} for $\kappa$ as:
\begin{align}\label{Lapl_Allgem_EW_R}
0\,=\,\left\{\begin{array}{ll}
J_{\frac{n}{2}-1}(\kappa(1+R))Y_{\frac{n}{2}-1}(\kappa R)
\,-\,J_{\frac{n}{2}-1}(\kappa R)Y_{\frac{n}{2}-1}(\kappa(1+R)) 
& \forall \,
n \mbox{ even}\,\\[2mm]
J_{\frac{n}{2}-1}(\kappa(1+R))J_{-\frac{n}{2}+1}(\kappa R)
\,-\,J_{\frac{n}{2}-1}(\kappa R)J_{-\frac{n}{2}+1}(\kappa(1+R)) & \forall \,
n \mbox{ odd}\,,
\end{array}
\right.
\end{align}
where we note, that the equations in \eqref{Lapl_Allgem_EW_R} at
$n_L\,:=\,n+2$ are the equations for  
$\kappa\,=\kappa_{1,S,(n)}({\cal A})$ as well. 
For $n=3$ in the proof of \cite[Corollary 6]{RuRuTh2025} we showed the asymptotic limit
of equation \eqref{Lapl_Allgem_EW_R} as the following equation in
$\kappa$ 
\begin{align}\label{kappa_0}
 0\,=\,  \,\frac{2}{\pi \kappa} 
\sin(\kappa) \,\,,
\end{align} 
with the smallest positive solution $\kappa\,=\,{\pi}$.
In addition we will show another way from equations
\eqref{Lapl_Allgem_EW_R} to \eqref{kappa_0} in Note
\ref{limit_Asympt_EWPi} below 
which uses asymptotic formulas. 
But here, we use the ideas of Subsection \ref{Sec2avBou}
for the justification of the smallest positive solution  
$\kappa\,=\,{\pi}$ as the constant in the Friedrichs' inequality. 
Namely, one applies the result of \cite{nazarov2000}, Prop. 1.1 with
$p=q=2$ and $\kappa\,=\,\lambda^{(r)}_{p,q}(R,R+1)$. % (cf.\cite{nazarov2000}).
We write the last formula in the proof of the Propositon 1.1 and find
\begin{align}
\label{eq:poincareRinfty}
\hspace*{-1.62cm}
\kappa \,=\, \sqrt{
\min_w {\frac{\int_{{\Omega}_{(n),2R}} ({\underline{\nabla}} w)^{T} \cdot {\underline{\nabla}} w \, d\underline{x}}
{\int_{{\Omega}_{(n),2R}} \left|w\right|^2 \, d\underline{x}}}}\,=\,\sqrt{
\min_{\tilde{w}} 
\frac{\int_{{\Omega}_{(n),2R}} \!\!  r^{n-1} ({\underline{\nabla}} {\tilde{w}})^{T} \cdot {\underline{\nabla}} {\tilde{w}} \,d \omega_{(n)} d {r}}{\int_{{\Omega}_{(n),2R}} \!\!  r^{n-1} 
\left|\tilde w\right|^2 \, d \omega_{(n)} d {r} }}
\,. 
\end{align}
Finally, integral estimates lead to 
\begin{align}
\label{eq:poincareR_inequ}
\hspace*{-1.62cm}
\left(
{\frac{R}{R+1}}\right)^{\frac{n-1}{2}}{\pi}\,\leq\,
\kappa \,\leq\, \left(
{\frac{R+1}{R}}\right)^{\frac{n-1}{2}}{\pi}\, \,,
\end{align}
where we have applied Eq. \eqref{eq:1DEigenwert}. 
Passing to the limit we find $ \lim_{{R}\to\infty} {\kappa}_{1,L,(n)}({2R})\,=\,{\pi}$. 
Summing up we see with \eqref{eq:poincareR_inequ} that \\[.15cm]
$\lim_{{R}\to\infty} {\lambda}_{1,L,(n)}(2R)\,=\,
\lim_{{R}\to\infty} \kappa^2_{1,L,(n)}({2R})\,=\, $
$ \lim_{{\cal A}\to\infty} {\lambda}_{1,L,(n)}({\cal A})\,=\,
\lim_{{\cal A}\to\infty} \kappa^2_{1,L,(n)}({\cal A})\,=\,{\pi}^2$.}}
\end{proof}
\noindent Using Proposition
\ref{PiQuadratinf} we can understand the asymptotic behaviour
of $c_{p}({\cal A})$ and $c_{p,S}({\cal A})$ (cf. \eqref{simple-tool}) as
\begin{align*}
\lim_{{\cal A}\,\to\,\infty}{\;\!} c_{p,(n)}({\cal A})={\frac{1}{\pi}}\,\,\quad \text{and} \quad \lim_{{\cal A}\,\to\,\infty}{\;\!} c_{p,S,(n)}({\cal A})={\frac{1}{\pi}}.
\end{align*}
\begin{note} \label{limit_Asympt_EWPi} We can also  %describe the method to
                                %deduce from the equations
                                %\eqref{Lapl_Allgem_EW_R} the limit
                                %result \eqref{kappa_0} by means of
                                %the
  use asymptotic formulas. Namely,
we use the asymptotic behaviour of the {{Besselfunctions }}
\cite[(5.16.1) and (5.16.2) on p.~177]{Lebedev} as a tool in the next step,
where we write these formulas applying  %$\cal O$ -
'the Big O notation' which is also called Landau symbol.
The asymptotic formulas for the Bessel functions at $t\,\to\,\infty$ are 
\begin{align}\label{Asympt_EW_R}
\sqrt{\frac{2}{\pi \cdot t}}\left(\cos(t - {\frac{n-1}{4}}\pi)\,+\,{\cal O}(|t|^{-1})\right)\quad\, = \quad\,&\quad
J_{\frac{n}{2}-1}(t)\,\quad \quad\forall \,
n\,\in\,{\bb N}\,\,n>1\nonumber\\
\sqrt{\frac{2}{\pi \cdot t}}\left(\sin(t - {\frac{n-1}{4}}\pi) \,+\,{\cal O}(|t|^{-1})\right)\quad\, = \quad\,&
\left\{
\begin{array}{ll}
Y_{\frac{n}{2}-1}(t)\,& \forall \,
n \mbox{ even}\,\\[2mm]
J_{-\frac{n}{2}+1}(t)\, & \forall \,
n \mbox{ odd}\,.
\end{array}
\right.%\nonumber
\end{align}
We write Eqs. \eqref{Lapl_Allgem_EW_R} for large values of $R\,=\,\frac{\cal A}{2}$	
by means of the asymptotic formulas \eqref{Asympt_EW_R}. 
With some calculation using the elementary trigonometric formulas and
the well 
known angle sum identities we see for all $n\,\in\,{\bb N}\,;\,n>1$ that 
\begin{align}\label{Allgem_EW_R_Asympt}
\hspace*{-.3cm}
0=&\frac{2}{\pi \kappa {\sqrt{R(R+1)}}}\left( \cos\big(\kappa(R+1) - {\frac{n-1}{4}}\pi\big)\sin\big(\kappa R - {\frac{n-1}{4}}\pi\big)\,
-\, \cos\big(\kappa R - {\frac{n-1}{4}}\pi\big)
\sin\big(\kappa(R+1) - {\frac{n-1}{4}}\pi\big)
\right)\nonumber\\
{~} & +
\frac{2}{\pi \kappa {\sqrt{R(R+1)}}}\left({\cal O}(|\kappa R|^{-1}) \,+\, {\cal O}(|\kappa (R+1)|^{-1}) \,+\,
{\cal O}(|{\kappa}^2 R(R+1)|^{-1}) 
 \right) \nonumber\\
{~}\,=\, &
\frac{2}{\pi\kappa {\sqrt{R(R+1)}}}\left(-\,\sin(\kappa) \,+\,{\cal O}(|\kappa R|^{-1}) \,+\, {\cal O}(|\kappa (R+1)|^{-1}) \,+\,
{\cal O}(|{\kappa}^2 R(R+1)|^{-1}) 
 \right)\,  \,
\end{align}
as an 'approximate' limit of the Equations \eqref{Lapl_Allgem_EW_R} for large values of $R\,=\,\frac{\cal A}{2}$.
\end{note}
%%%
\begin{note} \label{limit_Laplace_EF}
The 'approximate' limit of the first eigenfunction $w_{1,L,{\cal A}}({\underline{x}})$ (cf. Theorem \ref{thm_Lapl_n} 
Eq. \eqref{Lapl_Allgem_EF}) is now immediately obtained\\[.1cm]
for large values of the number $R\,=\,\frac{\cal A}{2}$ (large ${\cal A}$) and $r\,:=\,\frac{\cal A}{2}+s\,=\,R+s \,$ at $s\,\in\,[0,1]$
as
\begin{align*}
w_{1,L,{\cal A}}({\underline{x}})\,\approx\,{\tilde{c}}\,{\sin}(\pi s)
  \,\,.
\end{align*}	
In order to prove the 'approximate' limit one has to
expand the fractions in $w_{1,L,{\cal A}}({\underline{x}})$ to use 
the relations
\eqref{Asympt_EW_R} and
\eqref{Bess_Summ} for $t\,\to\,\infty$. The explicit proofs for the 'approximate' limits 
at $n=2$ and $n=3$ for the first Laplace eigenfunctions and one of the first Stokes eigenfunctions are 
given in \cite{RuRuTh2016} and \cite{RuRuTh2025} , where it was possible to ensure periodic conditions 
with respect to one argument $t \,\sim \,R\cdot \varphi \,$ as the argument of an periodic strip in the 
choice of function spaces in \cite{RuRuTh2016} too.
Unfortunately this 
procedure is restricted to the case $n=2$. So we maintain the arguments
${\vartheta_{1}}, {\vartheta_{2}},\dots   
, {\vartheta_{n-2}}, {\varphi}\,$ for  $n\geq 3$. 
Additionally we can specify an `approximate' limit for the first
Stokes eigenfunctions ${\underline{w}}_{{\;\!}\varphi,S,{\cal A}}({\underline{x}})$ 
(cf. Eq. \eqref{Stokes_Allgem_EF}) for
large ${\cal A}$ as $s\,\in\,[0,1]$
\begin{align*}
{\underline{w}}_{{\;\!}\varphi,S,{\cal A}}({\underline{x}})\,\approx\,{\tilde{c}_{{\;\!}}}
\left\{
\begin{array}{ll}
\, {\sin}(\pi s)  \sin{\vartheta_{1}}\sin{\vartheta_{2}}\cdots \sin{\vartheta_{n-2}}\cdot 
{\underline{\mathfrak{e}}}_{{\;\!}\varphi}\,& \forall \,n\,\in\,{\bb N}\,;\,n>2  \\
\,  {\sin}(\pi s) \cdot 
{\underline{\mathfrak{e}}}_{{\;\!}\varphi}\,& 
\text{for }\,
\,n=2 \, .
\end{array}\right.
\end{align*}
\end{note} 
%%%%%%%%%%%%%%
%%%%%%%%%%%%%%##
\section{Results and Calculations}\label{Sec4}
To the best knowledge of the authors the results in Theorems
\ref{thm_Lapl_n} and \ref{thm_Stokes_n} are completely new.
For the first time we have constructed
\begin{itemize}
  \item the first Laplace
eigenfunctions and first Laplace eigenvalues for annuli 
in ${\bb R}^n,\,n\,>\,3$,
\item the first Stokes eigenfunctions and the first Stokes eigenvalues for balls and annuli
  in ${\bb R}^n,\,n\,>\,3$,
\item the connection between the 
first Laplace eigenvalues and the the first Stokes eigenvalues in the shift Theorem \ref{thm_shift_Lapl_Stokes_n}.
\end{itemize}
% There one has to take into account, that only their inverse values $\kappa_{1,L,(n)}({\cal A})$ and $\kappa_{1,S,(n)}({\cal A})$
% are computable. We have there only to calculate the $c_{p,(n)}({\cal A})$ respectively the $\kappa_{1,L,(n)}({\cal A})$
% because of the resuts from the shift Theorem \ref{thm_shift_Lapl_Stokes_n}.\\
The last result impacts the numerical calculation of the Poincar\'e
constants $c_{p,(n)}({\cal A})$ and $c_{p,S,(n)}({\cal A})$ since they
fulfill $c_{p,S,(n)}({\cal
  A})\,=\,c_{p,(n+2)}({\cal A})$ and  
the related roots are connected through 
$\kappa_{1,S,(n)}({\cal A})\,=\,\kappa_{1,L,(n+2)}({\cal A})$
$ \forall\,{\cal A}\,\in\,[0,\infty)$ and  $\forall\,n \,\in\,{\bb N}
,\,n >1$.
% There are the 
% $\kappa_{1,S,(n)}({\cal A})\,=\,\kappa_{1,L,(n+2)}({\cal A})$ (cf. the Theorems \ref{thm_Lapl_n} and \ref{thm_Stokes_n})
% defined as roots of the corresponding transcendental equations.
So we only calculate the '$\kappa_{1,L,(n+2)}({\cal A})$',
since the transfer to the $\kappa_{1,S,(n)}({\cal A})$ is obvious.
We note, that ${\cal A}$ has shown to be an excellent scaling parameter
for the calculation of the $\kappa_{1,L,(n)}({\cal A})$ $\forall\,n \,\in\,{\bb N} ,\,n >1$
over the range of ${\cal A}\,\in\,[0,\infty)$.  
The big advantage is a very small range for the roots $\kappa_{1,L,(n)}({\cal A})$ of the
transcendental equations.

\begin{figure}[th]
\centering
\scalebox{1.2}[1.6]{\includegraphics[width=0.55\textwidth]{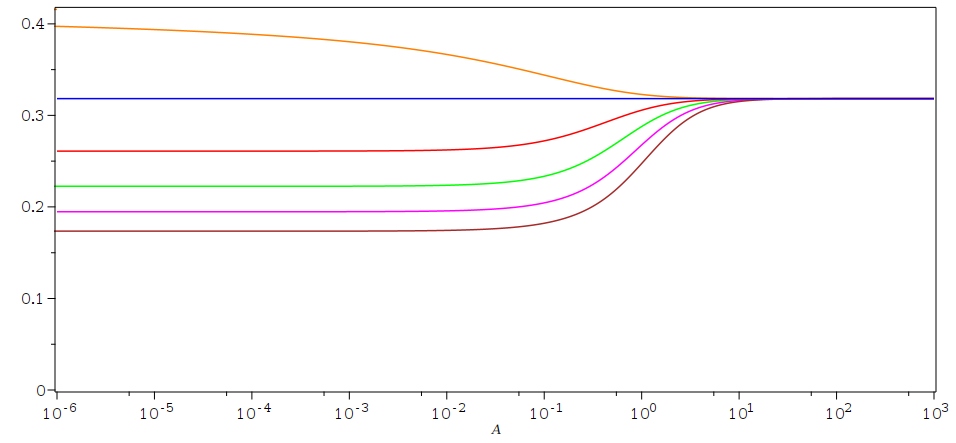}}
\caption{Calculated (Laplace) Poincar\'{e} constants
  % {$c_{p}({\calA})$
  for $n\,=\,{\Orange{2}},{\blue{3}},{\red{4}},{\green{5}},{\magenta{6}},{\brown{7}}$
as functions of $\mathcal A$ in logarithmic 
scale}
\label{fig:Poincare_N2bis_plot}
\end{figure}

We use standard Maple-procedures for the solution
$\kappa_{L,(n)}({\cal A})$ of transcendental equations  
\eqref{Lapl_Allgem_EW}.
The results of our calculations were written in data files as
${\cal A}$-pointwise exact $\kappa_{1,L,(n)}({\cal A})$-values. 
The ${\cal A}$-grid was chosen very fine for small ${\cal A}$-values and rougher for large ${\cal A}$
because of the completely different behaviour near ${\cal A}=0$ and
${{\cal A}\,\to\,\infty}$, respectively.  
The calculations of the $c_{p,(n)}({\cal A})$ are executed with
pointwise Maple procedures. 
The graphical representations of the obtained results in Figures 2 and
3 below and Figures 4 and 5 in the Appendix 
are created with Maple as well. 
We use a logarithmic scale for ${\cal A}$.
The values of $\kappa_{1,L,(n)}({\cal A})$ at ${\cal A}\,=\,0$ are the
first roots of the corresponding Besselfunctions. 
In ascending dimensional order
$n\,=\,{\Orange{2}},{\blue{3}},{\red{4}},{\green{5}},{\magenta{6}},{\brown{7}}$
 %$n\,= \,2, \,3 ,\,4 ,\,5,\,6,\,7\dots$\,:
  we find $\kappa_{1,L,(2)}({o})\,\approx\, 2.404825558 $,\,\, 
$\kappa_{1,L,(3)}({o})\,=\,\pi $,\,\,$\kappa_{1,L,(4)} ({o})\,\approx\, 3.831705970 $,\,\,
$\kappa_{1,L,(5)}({o})\,\approx\,4.493409458 $,\,\,$\kappa_{1,L,(6)}({o})\,\approx\, 5.135622302 $,
$\kappa_{1,L,(7)}({o})\,\approx\,5.763459197 $,\,\,$\dots$\,\,.%\\[.2cm]
\begin{figure}[h]
\centering
\scalebox{1.2}[1.6]{\includegraphics[width=0.55\textwidth]{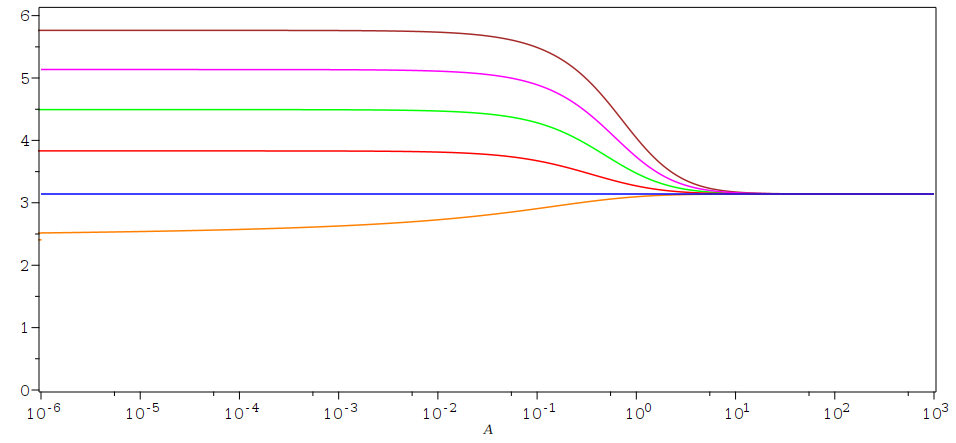}}
\caption{Calculated roots $\kappa_{1,L}({\cal A})$ %of the Laplace
  % transcendental equations
  as functions of $\mathcal A$
  in logarithmic 
scale $n\,=\,{\Orange{2}},{\blue{3}},{\red{4}},{\green{5}},{\magenta{6}},{\brown{7}}$}
\label{fig:EWWurzel_N2bis_plot}
\end{figure}

Finally, let us state some results for the Laplace and
Stokes Poincar\'{e} constants (i.e. the first Laplace and Stokes
eigenvalues, respectively)
for the small-gap limit. %${\cal A} \to \infty$.
The limits of
{$c_{p,(n)}({\cal A})$} and {$c_{p,S,(n)}({\cal A})$} 
for ${\cal A} \to \infty$ are equal and 
\begin{align*}
\lim_{{\cal A}\,\to\,\infty}{\;\!} c_{p,(n)}({\cal A}) \,=\,\lim_{{\cal A}\,\to\,\infty}{\;\!} c_{p,S,(n)}({\cal A})={\frac{1}{\pi}}\,\,\quad\quad \forall\,n \,\in\,{\bb N} ,\,n >1\,\,.
\end{align*}
These limits are independent of the dimension of the annuli. 
Moreover, for the first
eigenvalues $\lambda_{1,L,(n)}({\cal A})$ and 
$\lambda_{1,S,(n)}({\cal A})$ for any ${\cal A} \in (0,\infty)$ we see, that 
\begin{align*}
\lim_{{\cal A}\,\to\,\infty}{\;\!} \lambda_{1,L,(n)}({\cal A}) \,=\,{\pi}^2 \quad \text{or}\quad
\lim_{{\cal A}\,\to\,\infty}{\;\!} \lambda_{1,S,(n)}({\cal A}) \,=\,{\pi}^2 
\,\,.
\end{align*}
The multiplicity of the first eigenvalues $\lambda_{1,L,(n)}({\cal A})$ is one
and multiplicity of the first eigenvalues is $\lambda_{1,S,(n)}({\cal A})$ is constant. $\lambda_{1,L,(n)}({\cal A})$ and
$\lambda_{1,S,(n)}({\cal A})$ are continuous function of
${\cal A} \in (0,\infty)$ too. 
Finally, we have that 
${\lambda}_{1,L,(n)}({\cal A})\,\leq\,{\lambda}_{1,S,(n)}({\cal A})$
independenly of the dimension $n$ of the annuli. Thus, $c_{p,S,(n)}({\cal A})\,\leq
\,c_{p,(n)}({\cal  A}) $ for all  ${\cal A} \in (0,\infty)$ and for all $n \,\in\,{\bb N} ,\,n >1$. \\
Additionally our investigations show, that we have a universal upper
bound for Poincar\'{e} constants for  
all  ${\cal A} \in [0,\infty)$:
\begin{align*} 
c_{p,S,(n)}({\cal A}) \,\leq\, \frac{1}{\pi} \quad \, \forall \,n \,\in\,{\bb N} ,\,n >1
& \quad \quad \text{ and }\quad 
c_{p,(n)}({\cal A}) \,\leq\, \frac{1}{\pi}\,\,\quad \forall \,n \,\in\,{\bb N} ,\,n \geq  3\,\,.
\end{align*}
%%%%
%%%%%%

%%%%%%%%%%%%%%%%%%%%%%%%%%%%
\section*{Appendix}\label{App_Bes}
Let (cf. Notation \ref{N3}) the unit vectors in the Cartesian coordinate system in ${\bb
  R}^{n},\,n\geq 3$
be given by ${\underline{\mathfrak{e}}}_{j}\,:=\,
(\delta_{j,1},\delta_{j,2}, \dots, \delta_{j,n})^{T}$ for all $j=1,2,\dots,n$, with
Kronecker's delta $\delta_{j,k}$. 
The polar coordinates are stated as $r$,  $\vartheta_{1}$, $\dots$ , $\vartheta_{n-2}$ and $\varphi$ 
(cf. the following Definition \ref{polarcoord}) with the corresponding unit vectors
${\underline{\mathfrak{e}}}_{{\;\!}r}$, ${\underline{\mathfrak{e}}}_{{\;\!}\vartheta_{1}}$, $\dots$, ${\underline{\mathfrak{e}}}_{{\;\!}\vartheta_{n-2}}$  and
${\underline{\mathfrak{e}}}_{{\;\!}\varphi}$.
\begin{definition} \label{polarcoord}
The representation of any point ${\underline{x}} \in {\bb R}^{n}$ in the system
of polar coordinates is given via
\begin{align}  \label{Pol1Sh}
\begin{array}{rcl}
x_1 &\,=\,&r\cdot \sin{\vartheta_{1}}\cdots\sin{\vartheta_{n-2}}\cos{\varphi}\\
x_2 &\,=\,&r\cdot \sin{\vartheta_{1}}\cdots\sin{\vartheta_{n-2}}\sin{\varphi}\\
x_3 &\,=\,&r\cdot \sin{\vartheta_{1}}\cdots\cos{\vartheta_{n-2}}\\
{~} & \vdots  & {~} \\
x_{n-1} &\,=\,&r\cdot \sin{\vartheta_{1}}\cos{\vartheta_{2}}\\
x_{n} &\,=\,&r\cdot \cos{\vartheta_{1}}
\end{array}\\
\mbox{with}\,\,
r\,:=\,\|{\underline{x}}\| \in [0,\infty) ,\,\vartheta_{1},\dots,\vartheta_{n-2}\,\in\,[0,\pi]
\,\,\mbox{and}\,\,\varphi\,\in\,[0,2\pi] \,\,.\nonumber
\end{align}
\end{definition}
\begin{notation}[Surface harmonics of degrees $\ell = 0$ and $\ell = 1$]\label{SurfHarm0+1}
The function $f(\vartheta_{n-2},\dots\,\vartheta_{1},\varphi)\,=\,const.\,\not=\,0$  is a non-vanishing harmonic 
polynomial of the degree $\ell = 0$ in $r$. We state for any $ n\,\in {\bb N}:n\,>\,1$ the sherical 
surface harmonic function
\begin{align}  \label{SphSurf0}
S^{\{0\}}\,:=\,1
\end{align}
In the representation \eqref{Pol1Sh} are the functions $\{x_{k}\}_{k=1}^n$ harmonic polynomials
of the degree $\ell = 1$ in $r$. We call the functions
 \begin{align}  \label{SphSurf1}
\begin{array}{rcl}
S^{\{1\}}_1 &\,=\,& \sin{\vartheta_{1}}\cdots\sin{\vartheta_{n-2}}\cos{\varphi}\\
S^{\{1\}}_2 &\,=\,& \sin{\vartheta_{1}}\cdots\sin{\vartheta_{n-2}}\sin{\varphi}\\
S^{\{1\}}_3 &\,=\,& \sin{\vartheta_{1}}\cdots\cos{\vartheta_{n-2}}\\
{~} & \vdots  & {~} \\
S^{\{1\}}_{n-1} &\,=\,& \sin{\vartheta_{1}}\cos{\vartheta_{2}}\\
S^{\{1\}}_{n} &\,=\,& \cos{\vartheta_{1}}\,,
\end{array}\\
\mbox{where}\,\,
\,\vartheta_{1},\dots,\vartheta_{n-2}\,\in\,[0,\pi]
\,\,\mbox{and}\,\,\varphi\,\in\,[0,2\pi] \,\,,\nonumber
\end{align}
sherical surface harmonics of the degree $\ell = 1$. We write $S\,\in\,{\mbox{span}}\{S^{\{1\}}_k\}_{k=1}^n$ 
for a spherical surface harmonic function of the degree $\ell = 1$
(e.g. the Definition in Subsection  6.3.1 in \cite{Triebel}) too .
\end{notation}
\begin{remark}\label{JacobiMat}
The first step in the calculation of
the transformation between the Cartesian coordinates and the spherical polar coordinates
(cf. Definition \ref{polarcoord})
is it to calculate the Jacobian Matrix:
\begin{align*}
{\underline{\underline{J}}} =
\begin{bmatrix} 
\frac{\partial x_1}{\partial r}&\frac{\partial x_1}{\partial \vartheta_{1}}&
\dots &\frac{\partial x_1}{\partial \vartheta_{n-2}}
&\frac{\partial x_1}{\partial{\varphi}}\\
\vdots&\vdots & {~}& \vdots&\vdots\\
\frac{\partial x_n}{\partial r}&\frac{\partial x_n}{\partial \vartheta_{1}}&
\dots  &\frac{\partial x_n}{\partial \vartheta_{n-2}}&\frac{\partial x_n}{\partial{\varphi}}
\end{bmatrix}
  \end{align*}
 and in a second step the corresponding metric tensor 
 ${\underline{\underline{g}}}$:
\begin{align*}  
{\underline{\underline{g}}}\,:=\,{\underline{\underline{J}}} \cdot {\underline{\underline{J}}}^T\,=\,
\text{diag}\{g_{j,j}\}_{j=1}^{n}\quad\,\text{with}\,\, g_{1,1}=1,\,g_{2,2}=r^2,\,\dots\,,
g_{n,n}=r^2{\sin}^2{\vartheta_{1}}  \cdots {\sin}^2{\vartheta_{n-2}}
\,.
\end{align*}
The reciprocal of the square roots of the $\{g_{j,j}\}_{j=1}^{n}$ used as multipliers column by column provide
applied on ${\underline{\underline{J}}}$ the matrices of transformation.
\end{remark}
\begin{remark}
\label{R1} The transformation between the Cartesian coordinates and the spherical polar coordinates
(cf. Notation \ref{N3}) as the transformation of one coordinate system to the other 
can be written as ${\underline{u}}_{\mathfrak{c}}\,=\,
{\underline{\underline{T}}}_{{\mathfrak{c}},{\mathfrak{s}}}{\underline{u}}_{\mathfrak{s}}$
\,or\,\,\,${\underline{u}}_{\mathfrak{s}}\,=\,
{\underline{\underline{T}}}^{-1}_{{\mathfrak{c}},{\mathfrak{s}}}
{\underline{u}}_{\mathfrak{c}}\,=\,
{\underline{\underline{T}}}_{{\mathfrak{s}},{\mathfrak{c}}}{\underline{u}}_{\mathfrak{c}}$,
respectively,
where we  use the concept of columns of coordinates
and the transformation matrices ${\underline{\underline{T}}}_{{\mathfrak{s}},{\mathfrak{c}}}\,:=\,
{\underline{\underline{T}}}_{{\mathfrak{c}},{\mathfrak{s}}}^{-1}\,=\,
{\underline{\underline{T}}}_{{\mathfrak{c}},{\mathfrak{s}}}^{T}\,$ 
\begin{equation} \label{R1_T=2} 
		{\underline{\underline{T}}}_{{\mathfrak{c}},{\mathfrak{s}}}\,
		:=\,\left[
		\begin{array}{lc}
\cos{\varphi}& -\sin{\varphi} \\
\sin{\varphi}&  {~}\cos{\varphi}
		\end{array}
		\right] \quad\,\mbox{for} \quad \,
n\,=\,2\,\,,
\end{equation}
\begin{equation}\label{R1_T=3} 
		{\underline{\underline{T}}}_{{\mathfrak{c}},{\mathfrak{s}}}\,
		:=\,\left[
		\begin{array}{llc}
\sin{\vartheta_{1}}\cos{\varphi}& \cos{\vartheta_{1}}\cos{\varphi}& -\sin{\varphi} \\
\sin{\vartheta_{1}}\sin{\varphi}& \cos{\vartheta_{1}}\sin{\varphi}& {~}\cos{\varphi}\\	
\cos{\vartheta_{1}} & -\sin{\vartheta_{1}}& {~} 0
		\end{array}
		\right] \quad\,\mbox{for} \quad \,
n\,=\,3\,\,,
\end{equation}
\begin{equation}\label{R1_T=4} 
		{\underline{\underline{T}}}_{{\mathfrak{c}},{\mathfrak{s}}}\,
		:=\,\left[
		\begin{array}{llcc}
\sin{\vartheta_{1}}\sin{\vartheta_{2}}\cos{\varphi} & \cos{\vartheta_{1}}\sin{\vartheta_{2}}\cos{\varphi}& \cos{\vartheta_{2}}\cos{\varphi}&
-\sin{\varphi} \\
\sin{\vartheta_{1}}\sin{\vartheta_{2}}\sin{\varphi} & \cos{\vartheta_{1}}\sin{\vartheta_{2}}\sin{\varphi}& \cos{\vartheta_{2}}\sin{\varphi} & {~}\cos{\varphi}\\
\sin{\vartheta_{1}}\cos{\vartheta_{2}} & \cos{\vartheta_{1}}\cos{\vartheta_{2}} & -\sin{\vartheta_{2}}& {~} 0\\
\cos{\vartheta_{1}} & -\sin{\vartheta_{1}}& {~} 0 & {~} 0
		\end{array}
		\right] \quad\,\mbox{for} \quad \,
n\,=\,4\,\,,
\end{equation}
and for general $n=n$
\begin{equation}\label{R1_T=n} 
		{\underline{\underline{T}}}_{{\mathfrak{c}},{\mathfrak{s}}}\,
		:=\,\left[
\begin{array}{llccc}
\sin{\vartheta_{1}}\sin{\vartheta_{2}}\cdots \sin{\vartheta_{n-2}}\cos{\varphi} & 
\cos{\vartheta_{1}}\sin{\vartheta_{2}}\cdots \sin{\vartheta_{n-2}}\cos{\varphi}& \dots &
\cos{\vartheta_{n-2}}\cos{\varphi}&
-\sin{\varphi} \\
\sin{\vartheta_{1}}\sin{\vartheta_{2}}\cdots \sin{\vartheta_{n-2}}\sin{\varphi} & 
\cos{\vartheta_{1}}\sin{\vartheta_{2}}\cdots \sin{\vartheta_{n-2}}\sin{\varphi} & \dots &\cos{\vartheta_{n-2}}\sin{\varphi}&
 {~} \cos{\varphi}\\
\sin{\vartheta_{1}}\sin{\vartheta_{2}}\cdots \cos{\vartheta_{n-2}}& 
\cos{\vartheta_{1}}\sin{\vartheta_{2}}\cdots \cos{\vartheta_{n-2}} & \dots & - \sin{\vartheta_{n-2}}&
 {~} 0\\
\vdots & \vdots & \ddots &\vdots & \vdots\\
\sin{\vartheta_{1}}\cos{\vartheta_{2}} & \cos{\vartheta_{1}}\cos{\vartheta_{2}} &\dots &
 {~} 0 & {~} 0\\
\cos{\vartheta_{1}} & -\sin{\vartheta_{1}}& \dots &    {~} 0 & {~} 0
		\end{array}
		\right] %\,\,\mbox{for}  \,\, n\,=\,n\,
                \,.
\end{equation}
\end{remark}
%%%%%%%%%%%%
\begin{remark}\label{Lapldimn}
The Laplacian in spherical coordinates
$\Delta_{sph}(.):=\Delta_{r,\vartheta_{1},\dots\,\vartheta_{n-2},\varphi}(.)$ is
($n\geq3$; see, e.g.,  6.3.4 in \cite{Triebel})
\begin{align*} \hspace*{-.3cm}
 \Delta_{sph}(.)  = \frac{1}{r^{n-1}}
\frac{\partial{ }}{\partial r}({r^{n-1}}\frac{\partial { (.)}}{\partial r})
+\frac{1}{r^{2}}\left(
\frac{1}{\sin^{n-2}{\vartheta_{1}}}\frac{\partial{ }}{\partial \vartheta_{1}}
(\sin^{n-2}\vartheta_{1}\frac{\partial{ }(.)}{\partial \vartheta_{1}})+
\frac{1}{\sin^{2}\vartheta_{1}\sin^{n-3}{\vartheta_{2}}}\frac{\partial{ }}{\partial \vartheta_{2}}
(\sin^{n-3}\vartheta_{2}\frac{\partial{ }(.)}{\partial \vartheta_{2}})\,+\right.
\\
{~} 
\left.\,\dots\,+\,
\frac{1}{\sin^{2}\vartheta_{1}\sin^{2}{\vartheta_{2}}\cdots
\sin^{2}{\vartheta_{n-3}}
\sin^{~}{\vartheta_{n-2}}
}\frac{\partial{ }}{\partial \vartheta_{n-2}}
(\sin \vartheta_{n-2}\frac{\partial{ }(.)}{\partial \vartheta_{n-2}})\,+
\,
\frac{1}{\sin^{2}\vartheta_{1}\sin^{2}{\vartheta_{2}}\cdots
\sin^{2}{\vartheta_{n-2}}}
  \frac{\partial^{2}{ (.)}}{\partial \varphi^{2}}\right)\,\,,
\end{align*}
resp. $\displaystyle
\Delta_{r,\vartheta_{1},\dots\,\vartheta_{n-2},\varphi}(.)  = 
{~} 
\,\frac{1}{r^{n-1}}
\frac{\partial{ }}{\partial r}({r^{n-1}}\frac{\partial { (.)}}{\partial r})
\,-\,\frac{\displaystyle{1}}{\displaystyle{r^{2}}}{ \mbox{B}}{\;\!}(.)\,,\hspace*{.2cm}{~}
$
where B$(.) $ denotes the Beltrami differential operator.
\end{remark}
\noindent We define the Laplace-Beltrami operator by means of
Beltrami's differential operator
in the following
\begin{definition}\label{DBeltr} { For all $Y\,\in\,D({\boldsymbol
  B^{\circledast}})=C^{\infty}({\omega_{(n)}}) \subset {\bb
  L}_{2}({\omega_{(n)}})\,$  the Laplace-Beltrami operator is defined
as}
\begin{align*}
{\boldsymbol B^{\circledast}}\,{Y} := \,{ \mbox{B}}{\;\!}(Y)\,\,=\,-
\left(
\frac{1}{\sin^{n-2}{\vartheta_{1}}}\frac{\partial{ }}{\partial \vartheta_{1}}
(\sin^{n-2}\vartheta_{1}\frac{\partial{ } Y}{\partial \vartheta_{1}})+
\frac{1}{\sin^{2}\vartheta_{1}\sin^{n-3}{\vartheta_{2}}}\frac{\partial{ }}{\partial \vartheta_{2}}
(\sin^{n-3}\vartheta_{2}\frac{\partial{ } Y }{\partial \vartheta_{2}})\,+\right.\hspace*{3.1cm}  
\\
{~} 
\left.\,\dots\,+\,
\frac{1}{\sin^{2}\vartheta_{1}\sin^{2}{\vartheta_{2}}\cdots
\sin^{2}{\vartheta_{n-3}}
\sin^{~}{\vartheta_{n-2}}
}\frac{\partial{ }}{\partial \vartheta_{n-2}}
(\sin \vartheta_{n-2}\frac{\partial{ } Y}{\partial \vartheta_{n-2}})\,+
\,
\frac{1}{\sin^{2}\vartheta_{1}\sin^{2}{\vartheta_{2}}\cdots
\sin^{2}{\vartheta_{n-2}}}
\frac{\partial^{2}{\, Y }}{\partial \varphi^{2}}\right)\,\,.
\end{align*}
We denote the Friedrichs' extension of ${\boldsymbol B^{\circledast}}$
by ${\boldsymbol 
  B}:={\overline{\boldsymbol B^{\circledast}}}$, where ${\boldsymbol
  B}$ is applied on 
$D({\boldsymbol B})\,:=\,{\bb W}_{2}^{2}({\omega_{(n)}})\subset {\bb L}_{2}({\omega_{(n)}})$.
\end{definition}
\begin{remark} The detailed construction of the Laplace-Beltrami operator ${\boldsymbol
  B}$ is given in \cite[Subsection 6.3.5]{Triebel} at great length. Especially the step from a n-dimensional 
shell to the boundary ${\omega_{(n)}}$ is illustrated there.
\end{remark}
\noindent We cite explicitly the following result:
\begin{theorem} \label{thmsbeltr}
The Laplace-Beltrami operator ${\boldsymbol B}$ is nonnegative and self-adjoint.
${\boldsymbol B}$ is an operator with pure point spectrum. Its eigenvalues are $\ell (\ell+n-2)$,  
$\ell\,=\,0,1,2,\dots $.
The surface harmonics S(.) of the degree $\ell$ form a set of all eigenfunctions of ${\boldsymbol B}$
to the eigenvalue $\ell (\ell+n-2)$.
\end{theorem}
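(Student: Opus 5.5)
The plan is to reduce Theorem \ref{thmsbeltr} to the theory of harmonic polynomials, in the form used in Subsection 6.3 of \cite{Triebel}. First, \emph{nonnegativity and self-adjointness.} For $Y\in C^{\infty}(\omega_{(n)})$, integrating by parts in each angle with the surface measure $\sin^{n-2}\vartheta_1\sin^{n-3}\vartheta_2\cdots\sin\vartheta_{n-2}\,d\vartheta_1\cdots d\varphi$ gives
\[
({\boldsymbol B^{\circledast}}Y,Y)_{{\bb L}_2(\omega_{(n)})}=\int_{\omega_{(n)}}|\nabla_{\omega}Y|^2\,d\omega\,\ge 0 .
\]
The boundary terms vanish: the weights vanish at $\vartheta_k\in\{0,\pi\}$, and $\varphi$ is periodic. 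This makes ${\boldsymbol B^{\circledast}}$ symmetric and nonnegative, so its Friedrichs extension ${\boldsymbol B}$ is self-adjoint and nonnegative.

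Next, \emph{pure point spectrum.} The form domain is ${\bb W}_2^1(\omega_{(n)})$. By Rellich's theorem on the compact manifold $\omega_{(n)}$, this embeds compactly into ${\bb L}_2(\omega_{(n)})$. Hence $({\boldsymbol B}+I)^{-1}$ is compact and self-adjoint. Exactly as in Theorem \ref{thmstok} and Corollary \ref{STOeiFU}, ${\boldsymbol B}$ therefore has only real eigenvalues of finite multiplicity accumulating at $\infty$, with an orthonormal eigenbasis.

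Then \emph{identification of eigenfunctions.} Let $P$ be a homogeneous harmonic polynomial of degree $\ell$, and write $P=r^{\ell}S$ with $S=P|_{\omega_{(n)}}$. Insert this into the spherical Laplacian of Remark \ref{Lapldimn}:
\[
0=\Delta P=\big(\ell(\ell-1)+(n-1)\ell\big)r^{\ell-2}S-r^{\ell-2}{\rm B}(S),
\]
so ${\rm B}S=\ell(\ell+n-2)S$. Thus every surface harmonic of degree $\ell$ is an eigenfunction with eigenvalue $\ell(\ell+n-2)$. These numbers are strictly increasing in $\ell$, so distinct degrees give distinct eigenvalues, and the corresponding spaces are mutually orthogonal.

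The main obstacle is \emph{completeness}: showing there are no further eigenvalues or eigenfunctions. I would prove that the span of all surface harmonics is dense in ${\bb L}_2(\omega_{(n)})$. Restrictions of polynomials are dense in $C(\omega_{(n)})$ by Stone--Weierstrass, hence dense in ${\bb L}_2$. Every homogeneous polynomial $p$ of degree $m$ decomposes as $p=h_m+\|\underline x\|^2 h_{m-2}+\|\underline x\|^4 h_{m-4}+\dots$ with harmonic $h_j$. This follows from the Fischer inner product argument: harmonic polynomials are the orthogonal complement of $\|\underline x\|^2\cdot\mathcal P_{m-2}$ in $\mathcal P_m$. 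On $\omega_{(n)}$ we have $\|\underline x\|=1$, so every polynomial restricts to a finite sum of surface harmonics.

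Now suppose ${\boldsymbol B}Y=\mu Y$ for some $Y$. Then $Y$ is orthogonal to every eigenspace with eigenvalue different from $\mu$, by self-adjointness. If $\mu\ne\ell(\ell+n-2)$ for all $\ell$, then $Y$ is orthogonal to a dense set, so $Y=0$. If $\mu=\ell_0(\ell_0+n-2)$, then $Y$ is orthogonal to all surface harmonics of degree $\ell\ne\ell_0$. By density, $Y$ lies in the closure of the degree-$\ell_0$ harmonics, which is a finite-dimensional and hence closed space. This yields the stated eigenvalues and the characterization of the eigenspaces.
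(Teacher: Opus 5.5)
Your argument is correct and is the standard one. The paper gives no proof of its own here: it quotes the result from \cite{Triebel}, Subsection 6.3, where it is obtained along exactly these lines (homogeneous harmonic polynomials, the decomposition $p=h_m+\|\underline{x}\|^2h_{m-2}+\dots$, density of their restrictions in ${\bb L}_2(\omega_{(n)})$, and the radial/angular splitting of $\Delta$ from Remark \ref{Lapldimn}), with one detail worth adding: each space of degree-$\ell$ surface harmonics is nonzero (for instance it contains the restriction of $\mathrm{Re}(x_1+ix_2)^{\ell}$), so that every $\ell(\ell+n-2)$ actually occurs as an eigenvalue.
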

\begin{remark} \label{Div_phi}
In our study of the first Stokes eigenfunctions we need the divergence of
vector function of the form  
${\underline{v}}:= v_{{\;\!}\varphi}(r,\vartheta_{1},\dots,\vartheta_{n-2},\varphi)\cdot
{\underline{\mathfrak{e}}}_{{\;\!}\varphi}$ in n-dimensional
(spherical) polar coordinates. We have 
\begin{equation} \label{DIV=2} 
		\mbox{div}{\;\!}\,{\underline{v}}=\mbox{div}{\;\!}\,v_{{\;\!}\varphi}(.)\cdot{\underline{\mathfrak{e}}}_{{\;\!}\varphi}\,=
		\,\frac{1}{r}\frac{\partial{\,v_{{\;\!}\varphi} }}{\partial \varphi}\,\quad\,\mbox{for} \quad \,
n\,=\,2\,\,,
\end{equation}
\begin{equation}\label{DIV=3} 
\mbox{div}{\;\!}\,{\underline{v}}=\mbox{div}{\;\!}\,v_{{\;\!}\varphi}(.)\cdot{\underline{\mathfrak{e}}}_{{\;\!}\varphi}\,=\,\frac{1}{r \sin\vartheta_{1}}\frac{\partial{\,v_{{\;\!}\varphi} }}{\partial \varphi}\,		\quad\,\mbox{for} \quad \,
n\,=\,3\,\,,
\end{equation}
\begin{equation}\label{DIV=4}
\mbox{div}{\;\!}\,{\underline{v}}=\mbox{div}{\;\!}\,v_{{\;\!}\varphi}(.)\cdot{\underline{\mathfrak{e}}}_{{\;\!}\varphi}\,=\,\frac{1}{r \sin\vartheta_{1}\sin{\vartheta_{2}}}\frac{\partial{\,v_{{\;\!}\varphi} }}{\partial \varphi}\,		
\quad\,\mbox{for} \quad \,
n\,=\,4\,\,,
\end{equation}
and $\quad \dots$
\begin{equation}\label{DIV=n}
\mbox{div}{\;\!}\,{\underline{v}}=\mbox{div}{\;\!}\,v_{{\;\!}\varphi}(.)\cdot{\underline{\mathfrak{e}}}_{{\;\!}\varphi}\,=\,\frac{1}{r \sin{\vartheta_{1}}\sin{\vartheta_{2}}\cdots \sin{\vartheta_{n-2}}}\frac{\partial{\,v_{{\;\!}\varphi} }}{\partial \varphi}\,\quad	
\,\,\mbox{for}  \,\,\quad
n\,=\,n\,\,,
\end{equation}
where the above statements follow by simple calculations.
\end{remark}
%%%
\noindent For the Besselfunctions $J_{\frac{n}{2}}(t)\,,J_{-\frac{n}{2}}(t) \,;\,n\,\in\,{\bb N}$  (and for the Weber functions 
$Y_{n}(t), \,n\,\in\,{\bb N}_{o}$)
we can use representations by trigometric functions respectively series in $t$
(cf. \cite{Triebel}, 5.5.1, \cite{CouHil} or  \cite{Lewin}) e.g.
\begin{eqnarray} \label{Bess_Halbe}
J_{\frac{1}{2}}(t)\,=\,\sqrt{\frac{2}{t\pi}}\cdot\,{\sin(t)} \quad  \quad ,\quad
J_{-\frac{1}{2}}(t)\,=\,\sqrt{\frac{2}{t\pi}}\cdot\,{\cos(t)} \, \mbox{ , }
\end{eqnarray}
\begin{eqnarray} \label{Bess_DreiHalbe}
J_{\frac{3}{2}}(t)\,=\,\sqrt{\frac{2}{t\pi}}\cdot\,\left(-
{\cos(t)}\,+\,
\frac{\sin(t)}{t} 
\right)
\quad \,=\,
\frac{J_{\frac{1}{2}}(t)}{t} \,-\,J_{-\frac{1}{2}}(t)
\quad\nonumber \mbox{ and }\\
{~}\\
J_{-\frac{3}{2}}(t)\,=\,-\sqrt{\frac{2}{t\pi}}\cdot\,\left(
{\sin(t)}\,+\,
\frac{\cos(t)}{t} 
\right)
\quad \,=\,-\left(
\frac{J_{-\frac{1}{2}}(t)}{t} \,+\,J_{\frac{1}{2}}(t)\right)\,.
 \nonumber 
\end{eqnarray}
%%%%
\noindent A tool in the proof of the asymptotic behaviour of the eigenfunctions for ${\cal A}\,\to\,\infty$ is e.g.
the representation of the Besselfunctions $J_{-\beta}(t) \,;\,\beta\,\in\,\,(-\infty,0)$
\begin{eqnarray} \label{Bess_Summ}
J_{-\beta}(t)\,=\,\sum_{j=0}^{\infty} (-1)^{j}
{\frac{1}{j!\Gamma(j+1-\beta)}}\left(\frac{t}{2}\right)^{2j-\beta} \, \mbox{ . }
\end{eqnarray}
\begin{figure}[ht]
\centering
\scalebox{1.2}[1.6]{\includegraphics[width=0.55\textwidth]{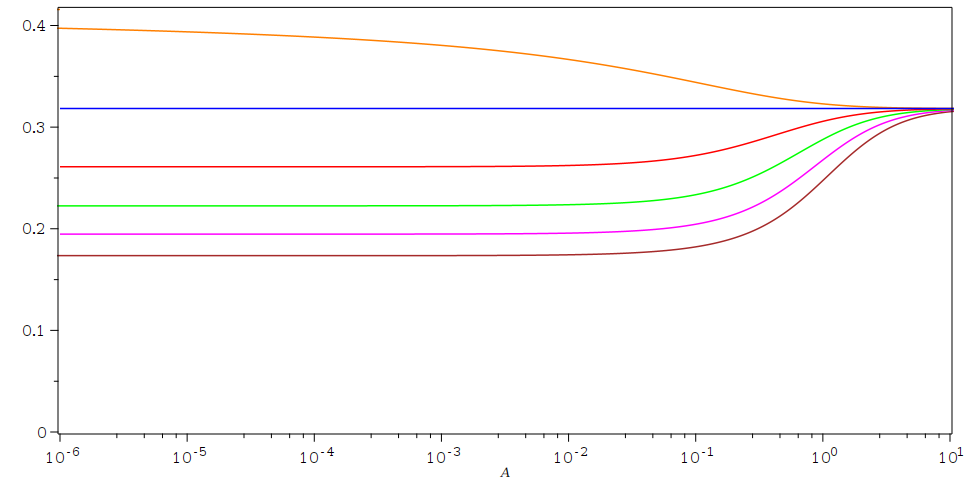}}
\caption{Zoomed in view of Figure 2}
\label{fig:Lupe_P_N2bis_plot}
\end{figure}
\begin{figure}[ht]
\centering
\scalebox{1.2}[1.6]{\includegraphics[width=0.55\textwidth]{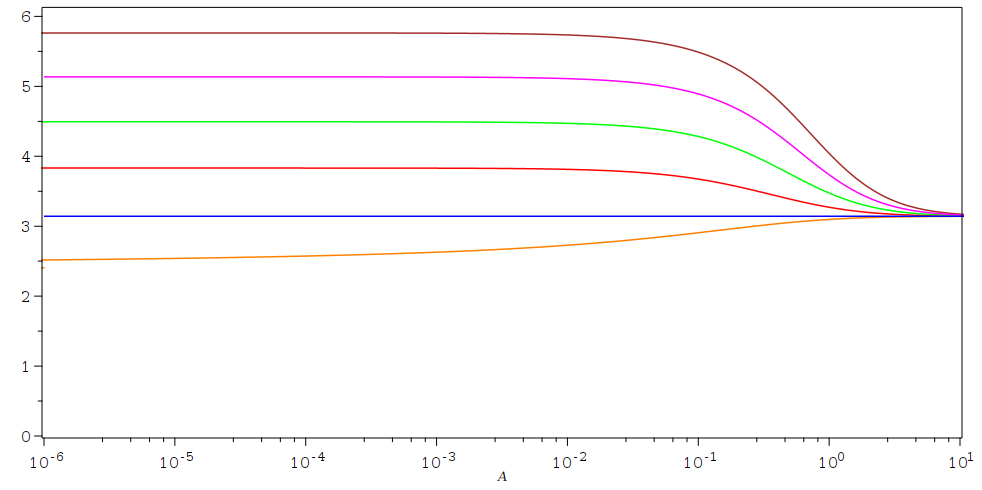}}
\caption{Zoomed in view of Figure 3}
\label{fig:Lupe_EWW_N2bis_plot}
\end{figure}
\end{document}

%% file: color_pdf.tex
\definecolor{dark-BLGR}{rgb}{0.28,0.28,0.28}

\definecolor{dark-red}{rgb}{0.90,0.14,0.14}

\definecolor{dark-viol}{rgb}{0.73,0.18,0.69}
\definecolor{Cyan}{rgb}{0.31,0.67,0.82}
\newcommand{\blue}{\textcolor{blue}}
\newcommand{\red}{\textcolor{red}}
\newcommand{\green}{\textcolor{green}}

\definecolor{light-yellow}{rgb}{1,1,0.8}

\definecolor{Blue}{rgb}{0.0353,0.0275,0.4}

\definecolor{GRAY}{rgb}{0.26,0.26,0.26}

\definecolor{GREYY}{rgb}{0.45,0.45,0.45}

\definecolor{vivid-viol}{rgb}{0.3255,0.0353,0.55}

\definecolor{dark-blue}{rgb}{0.05,0.05,0.65}

\definecolor{dark-green}{rgb}{0.03,0.77,0.29}

\definecolor{dark-Green}{rgb}{0.03,0.57,0.09}

\definecolor{strong-viol}{rgb}{0.2353,0.094,0.349}

\definecolor{BLUE}{rgb}{0.41,0.44,0.93}

\definecolor{RED}{rgb}{0.90,0.18,0.32}

\definecolor{Yel}{rgb}{0.89,0.95,0.19}

\definecolor{White}{rgb}{1,1,1}
\definecolor{black}{rgb}{0,0,0}

\definecolor{Orchid}{rgb}{0.6,0.1607,0.2}

\definecolor{Orange}{rgb}{0.99,0.521,0.34902}
\newcommand{\Orange}{\textcolor{Orange}}
\definecolor{magenta}{rgb}{0.99,0.02,0.99}
\newcommand{\magenta}{\textcolor{magenta}}
\definecolor{Thistle}{rgb}{0.1151,0.1249,0.9873}

\definecolor{brown}{rgb}{0.5450,0.2825,0.07453}
\newcommand{\brown}{\textcolor{brown}}

%% file: Poincare26.bbl
\begin{thebibliography}{99}
\bibitem{ADN} S.~Agmon, A.~Douglis, and L.~Nirenberg,
Estimates near the boundary for solutions of elliptic 
partial differential equations satisfying general boundary conditions II,
Comm. Pure Appl. Math. {\bf 17}, 35-92  (1964).
%%%%%%%
\bibitem{AGBu}
T.~Akinaga , S.C.~Generalis , F.H.~Busse;
Tertiary and Quaternary States in the Taylor-Couette System
in Chaos, Solitons and Fractals
Volume 109, April 2018, Pages 107-117
https://doi.org/10.1016/j.chaos.2018.01.033
%%%%%
\bibitem{AAR} G.E. ~Andrews, R.~Askey, and R.~Roy,
Special Functions,
(Cambridge Univ.Press, Cambridge, New York, 1999).
%%%%%%
\bibitem{Catbri} L.~Cattabriga,
{{Su un problema al contorno relativo si 
sistema di equazione di Stokes}},  
Rend. Mat. Univ. Padova {\bf 31},. 308-340 (1961).
%%%%%%%
\bibitem{CoFoi} P.~Constantin and C.~Foias,
{{Navier-Stokes Equations}}, 
(Univ.of Chic.Press, Chicago, 1988).	
%%%%%%%
\bibitem{CouHil} R.~Courant and  D.~Hilbert, 
{{Methoden der Mathematischen Physik}}, Vol.I and Vol.II
3. Aufl. (Springer, Berlin, Heidelberg, New York, 1968).
%%%%%%%%
\bibitem{galdi1998}
{G.P.~Galdi}
An introduction to the mathematical theory of the Navier-Stokes
  equations, Vol.~1: Linearised steady problems
(Springer, New York, 1998).
%%%%%%%%
\bibitem{GilTru} D. Gilbarg, N.S. Trudinger, Elliptic Partial Differential Equations of Second
Order, Grundlehren der mathematischen Wissenschaften 224, Reprint of the 1998
ed., (Springer, Berlin, Heidelberg, New York, 2001).
\bibitem{GirRav} V.~Girault and P.-A.~Raviart, 
{{Finite element approximation of the Navier-Stokes equations}}, 
(Springer, Berlin, 1979).
%%%%%%%
\bibitem{Jos}  D.D.~Joseph, 
{{Stability of Fluid Motions}},
Vol.I, (Springer, Berlin, Heidelberg, New York, 1976).
%%%%%%%%%%%
\bibitem{Junk} M.~Junk
{{Numerische Untersuchung der Stabilität der
Str\"omung im weiten Kugelspalt}}, (Cuvillier Verlag, Göttingen, 2005).  
%%
\bibitem{KaiWahl}
R.~Kaiser, W.~von~Wahl, {{A New Functional for the Taylor-Couette Problem in the Small-Gap Limit}},
in Mathematical theory in fluid mechanics, Pitman Research Notes in Mathematics, Series 354, 
editors: G.P. Galdi, J. Malek, J. Necas, 114-134 (1996).
%%
\bibitem{LeeRu} 
D.~S.~Lee and B.~Rummler, The Eigenfunctions of the Stokes Operator in Special 
Domains III, ZAMM {\bf 82},(2002) 399–407.
%%
\bibitem{Lebedev} N.~N.~Lebedev,
{Spezielle Funktionen und ihre Anwendung}
(BI Wissenschaftsverlag, Mannheim, 1973).
%
\bibitem{Lewin} W.~I.~Lewin und J.~I.~Grosberg
{Differentialgleichungen der mathematischen Physik}
(Verlag Technik, Berlin, 1952).
%
\bibitem{MoonSpencer} P.~Moon and D.~E.~Spencer, Field Theory Handbook, Including Coordinate Systems, Differential Equations, and Their Solutions, 2nd ed. (Springer-Verlag, New York, 1988).
%%
\bibitem{nazarov2000} A.I.~Nazarov,
The one-dimensional character of an extremum point of the
  {F}riedrichs inequality in spherical and plane layers,
Journal of Mathematical Sciences {\bf 102}, 5 (2000), 4473-4486.
%%%%%%%
\bibitem{passerini2009}
A.~Passerini, M.~R\r{u}\v{z}i\v{c}ka, and G.~Th{\"a}ter,
Natural convection between two horizontal coaxial cylinders,
ZAMM {\bf 89}, 5 (2009) 399-413.
%%%%%%%
\bibitem{passerini2010}
A.~Passerini, C.~Ferrario, M.~R\r{u}\v{z}i\v{c}ka, and G.~Th{\"a}ter,
Theoretical results on steady convective flows between horizontal
  coaxial cylinders,
SIAM Journal on Applied Mathematics {\bf 71}, 2 (2011) 465-486.
%%%%%%%%
\bibitem{passerini2024}
A.~Passerini, B.~Rummler, M.~R\r{u}\v{z}i\v{c}ka, and G.~Th{\"a}ter,
Natural Convection in the Horizontal Annulus: Critical Rayleigh Number for the steady Problem,
ZAMM : Volume 105, Issue 3, March 2025, 
https://doi.org/10.1002/zamm.202300535
%%%%%%%%%%%%%%%%%%%%%%
\bibitem{RumZyl} B.~Rummler, {
The Eigenfunctions of the Stokes Operator in Special Domains I}, ZAMM {\bf 77}, 8 (1997) 619–627. 
\bibitem{RumHab} B.~Rummler, 
{{Zur L\"osung der instation\"aren inkompressiblen 
Navier-Stokesschen Gleichungen in speziellen Gebieten}}, 
Magdeburg: Habilitation (1999/2000).
%%%%%%%
\bibitem{RumKug1} B.~Rummler, {The Eigenfunctions of the Stokes Operator in 
the open Unit Ball and in the open spherical Annulus},
Proc. of the  8th. Asian Computational Fluid Dynamics Conference, 
Hong Kong, 10-14 January, 2010
%%%%%%%
\bibitem{RumTh2024} B.~Rummler and G.~Th{\"a}ter, 
{{The Stokes Eigenvalue Problem on balls and annuli in three dimensions: Solutions with Poloidal
and Toroidal Fields}}, https://doi.org/10.48550/arXiv.2408.06948 (2024) 1-18
%%%%%%%%%%%
\bibitem{RuRuTh2016}
 B.~Rummler,, M.~R\r{u}\v{z}i\v{c}ka, and G.~Th{\"a}ter,
{Exact Poincar\'e constants in two-dimensional annuli}, ZAMM {\bf 97}, 1 (2017)
110–122.
 %%%%%%%%%
\bibitem{RuRuTh2025}
B.~Rummler,, M.~R\r{u}\v{z}i\v{c}ka, and G.~Th{\"a}ter,
{Exact Poincar\'e constants in three-dimensional annuli},
Arxiv - Ithaca, NY : Cornell University . https://doi.org/10.48550/arXiv.2506.13891 (2025)  1-12
\bibitem{Temam} R.~Temam, {{Navier-Stokes equations, theory and
      numerical 
analysis}}, 3rd edit., (North Holland, Amsterdam, 1984).
%%%%%%%
\bibitem{Triebel} H.~Triebel, {{Higher Analysis}}, 
  (Barth, Leipzig Berlin Heidelberg Amsterdam:, 1992).
  %%%%%%%%%%
\bibitem{Weid}
  J.~Weidmann, Stetige Abhängigkeit der Eigenwerte und
  Eigenfunktionen elliptischer Differentialoperatoren vom
  Gebiet. MATHEMATICA SCANDINAVICA, 54 (1984)
  51–69.
  %https://doi.org/10.7146/math.scand.a-12040
  %%%%%%
\end{thebibliography}
